\documentclass[twoside,12pt,reqno]{amsart}
\usepackage{amsmath,amscd,amssymb,epsfig,subfig}
\usepackage[all]{xy}
\usepackage{verbatim}
\usepackage{graphics,graphicx}
\usepackage{hyperref}

\addtolength{\textwidth}{54pt}\addtolength{\hoffset}{-27pt}
\newtheorem{definition}{Definition}
\newtheorem{theorem}[definition]{Theorem}

\newtheorem{proposition}[definition]{Proposition}
\newtheorem{lemma}[definition]{Lemma}
\newtheorem{remark}[definition]{Remark}
\newtheorem{example}[definition]{Example}
\newtheorem{corollary}[definition]{Corollary}

\newcommand{\C}{\ensuremath{\mathbb{C}} }
\newcommand{\Z}{\ensuremath{\mathbb{Z}} }
\newcommand{\R}{\ensuremath{\mathbb{R}} }

\newcommand{\g}{\ensuremath{\mathfrak{g}}}
\newcommand{\slt}{\ensuremath{\mathfrak{sl}(2)}}
\newcommand{\Uslt}{\ensuremath{U_{q}(\slt) } }
\newcommand{\Uqg}{\ensuremath{U_{q}(\g) } }
\newcommand{\UsltH}{\ensuremath{U^H_{q}(\slt) } }
\newcommand{\Uqghat}{\ensuremath{\widehat{U}_{q}(\g) } }
\newcommand{\Ubar}{\bar{U}^H_{q}(\slt)}
\newcommand{\Utilde}{\bar{U}_{q}(\slt)}

\newcommand{\End}{\operatorname{End}}
\newcommand{\Hom}{\operatorname{Hom}}
\newcommand{\Int}{\operatorname{Int}}
\newcommand{\tr}{\operatorname{tr}}
\newcommand{\Id}{\operatorname{Id}}

\newcommand{\wb}{\overline}
\newcommand{\vect}{\overrightarrow}

\newcommand{\cntr}{{\operatorname {cntr}}}

\newcommand{\e}{\operatorname{e}}


\newcommand{\unit}{\ensuremath{\mathbb{I}}}
\newcommand{\cat}{\mathcal{C}}

\newcommand{\catd}{{\mathcal{C}^H}}

\newcommand{\Graph}{\operatorname{Gr}}
\newcommand{\tet}{\mathcal{T}}
\newcommand{\FK}{K}
\newcommand{\LL}{\mathcal{L}}
\newcommand{\Gr}{G}
\newcommand{\Gs}{X}
\renewcommand{\wp}{{\Phi}}
\newcommand{\p}{\varphi}

\newcommand{\qn}[1]{{\left\{#1\right\}}}

\newcommand{\charge}{height}

\newcommand{\qd}{\operatorname{\mathsf{d}}}
\newcommand{\bb}{\operatorname{\mathsf{b}}}
\newcommand{\ro}{r} 
\newcommand{\m}{{\ro'}}
\newcommand{\states}{\operatorname{St}}
\newcommand{\twist}{\theta}
\newcommand{\bubble}{$H$-bubble}
\newcommand{\Pachner}{$H$-Pachner}
\newcommand{\lune}{$H$-lune}

\newcommand{\mathsmall}[1]{\mbox{\tiny$#1$}}
\newcommand{\mathsmal}[1]{\mbox{\small$#1$}}
\newcommand{\sjtop}[6]{\left|\begin{array}{ccc}#1 & #2 & #3 \\#4 & #5 &
      #6\end{array}\right|}
\newcommand{\sj}[6]{\left\{\begin{array}{ccc}#1 & #2 & #3 \\#4 & #5 &
      #6\end{array}\right\}}
\newcommand{\sjs}[6]{\left\{\begin{array}{ccc}#1 & #2 & #3 \\#4 & #5 &
      #6\end{array}\right\}^\sigma}
\newcommand{\epsh}[2]
         {\begin{array}{c} \hspace{-1.3mm}
        \raisebox{-4pt}{\epsfig{figure=#1,height=#2}}
        \hspace{-1.9mm}\end{array}}

       \newcommand{\T}{{\mathcal{T}}}
\newcommand{\pic}[2]{
  \setlength{\unitlength}{#1}
  {\begin{array}{c} \hspace{-1.3mm}
        \raisebox{-4pt}{#2}
        \hspace{-1.9mm}\end{array}}}

\begin{document}
\title[Modified $6j$-Symbols and $3$-Manifold Invariants]{Modified
  $6j$-Symbols and $3$-Manifold Invariants}
\author{Nathan Geer}
\address{Max-Planck-Institut f\"{u}r Mathematik\\
  Vivatsgasse 7\\
  53111 Bonn, Germany\\
  and Mathematics \& Statistics\\
  Utah State University \\
  Logan, Utah 84322, USA} \email{nathan.geer@usu.edu} 
\author{Bertrand Patureau-Mirand}
\address{L.M.A.M., - Universit\'e  Europ\'eenne de Bretagne\\
  Universit\'e de Bretagne-Sud, BP 573\\
  F-56017 Vannes, France } \email{bertrand.patureau@univ-ubs.fr}
\author{Vladimir Turaev}
\address{Department of Mathematics \\
  Indiana University \\
  Rawles Hall, 831 East 3rd St \\
  Bloomington, IN 47405, USA}
\date{\today} \thanks{The work of N.\ Geer was partially supported by the NSF
  grant DMS-0706725. The work of V.\ Turaev was partially supported by the NSF
  grants DMS-0707078 and DMS-0904262.  }

\begin{abstract}
  We show that the renormalized quantum invariants of links and graphs in the
  3-sphere, derived from tensor categories in \cite{GPT}, lead to modified
  $6j$-symbols and to new state sum $3$-manifold invariants.  We give examples
  of categories such that the associated standard Turaev-Viro $3$-manifold
  invariants vanish but the secondary invariants may be non-zero.  The
  categories in these examples are pivotal categories which are neither ribbon
  nor semi-simple and have an infinite number of simple objects.
\end{abstract}

\maketitle

{\it Dedicated to Jose Maria Montesinos on the  occasion of his 65th
birthday}

\section*{Introduction}

The numerical $6j$-symbols associated with the Lie algebra $sl_2(\C)$ were
first introduced in theoretical physics by Eugene Wigner in 1940 and Giulio
(Yoel) Racah in 1942. They were extensively studied in the quantum theory of
angular momentum, see for instance \cite{ed}. In mathematics, $6j$-symbols
naturally arise in the study of semisimple monoidal tensor categories. In this
context, the $6j$-symbols are not numbers but rather tensors on 4 variables
running over certain multiplicity spaces. The $6j$-symbols have interesting
topological applications: one can use them to write down state sums on knot
diagrams and on triangulations of 3-manifolds yielding topological invariants
of knots and of 3-manifolds, see \cite{KRR}, \cite{TV}, \cite{BW+}.

The aim of this paper is to introduce and to study \lq\lq modified"
$6j$-symbols. This line of research extends our previous paper \cite{GPT}
where we introduced so-called modified quantum dimensions of objects of a
monoidal tensor category. These new dimensions are particularly interesting
when the usual quantum dimensions are zero since the modified dimensions may
be non-zero. The standard $6j$-symbols can be viewed as a far-reaching
generalization of quantum dimensions of objects. Therefore it is natural to
attempt to \lq\lq modify" their definition following the ideas of \cite{GPT}.
We discuss here a natural setting for such a modification based on a notion of
an ambidextrous pair. In this setting, our constructions produce an
interesting (and new) system of tensors. These tensors share some of the
properties of the standard $6j$-symbols including the fundamental
Biedenharn-Elliott identity. We suggest a general scheme allowing to derive
state-sum topological invariants of links in 3-manifolds from the modified
$6j$-symbols.

As an example, we study the modified $6j$-symbols associated with a version of
the category of representations of $sl_2(\C)$. More precisely, we introduce a
Hopf algebra $\Utilde$ and study the monoidal category of weight
$\Utilde$-modules at a complex root of unity $q$ of odd order $r$. This
category has an infinite number of simple objects parametrized by elements of
$\C/2r \Z\approx \C^*$.  The standard quantum dimensions of these objects and
the standard $6j$-symbols are generically zero. We show that the modified
quantum dimensions are generically non-zero.  As an application, we construct
state-sum topological invariants of triples (a closed oriented 3-manifold $M$,
a link in $M$, an element of $H^1(M; \C^*)$).

Our techniques are similar to those used by Kashaev \cite{Kas2} and later
Baseilhac and Benedetti \cite{BB} who derived $3$-manifold invariants from the
category of modules over the Borel subalgebra of $U_q(sl_2(\C))$ at a root of
unity $q$. It is interesting to understand exact connections between these
constructions; we are currently working on this question.


\section{Pivotal tensor  categories}\label{S:SphCat}

We recall the definition of a pivotal tensor category, see for instance,
\cite{BW}. A \emph{tensor category} $\cat$ is a category equipped with a
covariant bifunctor $\otimes :\cat \times \cat\rightarrow \cat$ called the
tensor product, an associativity constraint, a unit object $\unit$, and left
and right unit constraints such that the Triangle and Pentagon Axioms hold.
When the associativity constraint and the left and right unit constraints are
all identities we say that $\cat$ is a \emph{strict} tensor category. By
MacLane's coherence theorem, any tensor category is equivalent to a strict
tensor category. To simplify the exposition, we formulate further definitions
only for strict tensor categories; the reader will easily extend them to
arbitrary tensor categories.

A strict tensor category $\cat$ has a \emph{left duality} if for each object
$V$ of $\cat$ there are an object $V^*$ of $\cat$ and morphisms
\begin{equation}\label{lele}
  b_{V} : \:\:   \unit \rightarrow V\otimes V^{*} \quad {\rm {and}} \quad
   d_{V}: \:\:
  V^*\otimes V\rightarrow \unit
\end{equation}
such that
\begin{align*}
  (\Id_V\otimes d_V)(b_V \otimes \Id_V)&=\Id_V & & {\rm {and}} & (d_V\otimes
  \Id_{V^*})(\Id_{V^*}\otimes b_V)&=\Id_{V^*}.
\end{align*}
A left duality determines for every morphism $f:V\to W$ in $\cat$ the dual (or 
transposed) morphism $f^*:W^*\rightarrow V^*$ by
$$
f^*=(d_W \otimes \Id_{V^*})(\Id_{W^*} \otimes f \otimes
\Id_{V^*})(\Id_{W^*}\otimes b_V),
$$
and determines for any objects $V,W$ of $\cat$, an isomorphism
$\gamma_{V,W}: W^*\otimes V^* \rightarrow (V\otimes W)^*$ by
$$
\gamma_{V,W} = (d_W\otimes \Id_{(V\otimes W)^*})(\Id_{W^*} \otimes d_V \otimes
\Id_W \otimes \Id_{(V\otimes W)^*})(\Id_{W^*}\otimes \Id_{V^*} \otimes
b_{V\otimes W}).
$$

Similarly, $\cat$ has a \emph{right duality} if for each object $V$ of $\cat$
there are an object $ V^\bullet$ of $\cat$ and morphisms
\begin{equation}\label{roro}
  b'_{V} : \:\:   \unit\rightarrow V^\bullet\otimes V \quad {\rm {and}} \quad
  d_{V}':\:\:   V\otimes V^\bullet\rightarrow \unit
\end{equation}
such that
\begin{align*}
  (\Id_{V^\bullet}\otimes d'_V)(b'_V \otimes \Id_{V^\bullet})&=\Id_{V^\bullet}
  & & {\rm {and}} & (d'_V\otimes \Id_{V})(\Id_{V}\otimes b'_V)&=\Id_{V}.
\end{align*}
The right duality determines for every morphism $f:V\to W$ in $\cat$ the dual
morphism $f^\bullet:W^\bullet\rightarrow V^\bullet$ by
$$
f^\bullet=(\Id_{V^\bullet} \otimes d'_W ) (\Id_{V^\bullet} \otimes f \otimes
\Id_{W^\bullet})( b'_V \otimes \Id_{W^\bullet}),
$$
and determines for any objects $V,W$, an isomorphism $\gamma'_{V,W}:
W^\bullet\otimes V^\bullet \rightarrow (V\otimes W)^\bullet$ by
$$
\gamma'_{V,W}
= ( \Id_{(V\otimes W)^\bullet} \otimes d'_V )(\Id_{(V\otimes W)^\bullet}
\otimes \Id_{V} \otimes d'_W \otimes \Id_{V^\bullet} )(b'_{V\otimes W}\otimes
\Id_{W^\bullet}\otimes \Id_{V^\bullet}).
$$

A \emph{pivotal category} is a tensor category with left duality $\{{b_V},
d_V\}_V $ and right duality $\{{b'_V}, d'_V\}_V $ which are compatible in the
sense that $V^*=V^\bullet$, $f^*=f^\bullet$, and $\gamma_{V,W}=\gamma'_{V,W}$
for all $V, W, f$ as above.

A tensor category $\cat$ is said to be an \emph{Ab-category} if for any
objects $V,W$ of $\cat$, the set of morphism $\Hom(V,W)$ is an additive
abelian group and both composition and tensor multiplication of morphisms are
bilinear.  Composition of morphisms induces a commutative ring structure on
the abelian group $K=\End(\unit)$. The resulting ring is called the
\emph{ground ring} of $\cat$. For any objects $V,W$ of $\cat$ the abelian
group $\Hom(V,W)$ becomes a left $K$-module via $kf=k\otimes f$ for $k\in K$
and $f\in \Hom(V,W)$. An object $V$ of $\cat$ is \emph{simple} if $\End(V)= K
\Id_V$.

\section{Invariants    of   graphs in $S^2$}\label{S:InvOfGraphs}

From now on and up to the end of the paper the symbol $\cat$ denotes
a pivotal tensor Ab-category with ground ring $K$, left duality
 \eqref{lele}, and right duality \eqref{roro}.

\begin{figure}[b]
  \centering $ \xymatrix{ \ar@< 8pt>[d]^{W_m}_{... \hspace{1pt}}
    \ar@< -8pt>[d]_{W_1}\\
    *+[F]\txt{ \: \; f \; \;} \ar@< 8pt>[d]^{V_n}_{... \hspace{1pt}}
    \ar@< -8pt>[d]_{V_1}\\
    \: } $
 \caption{}\label{F:Coupon}
\end{figure}
A morphism $f:V_1\otimes{\cdots}\otimes V_n \rightarrow
W_1\otimes{\cdots}\otimes W_m$ in $\cat$ can be represented by a box and
arrows as in Figure \ref{F:Coupon}.  Here the plane of the picture is oriented
counterclockwise, and this orientation determines the numeration of the arrows
attached to the bottom and top sides of the box.  More generally, we allow
such boxes with some arrows directed up and some arrows directed down. For
example, if all the bottom arrows in the above picture and redirected upward,
then the box represents a morphism $V_1^*\otimes{\cdots}\otimes V_n^*
\rightarrow W_1\otimes{\cdots}\otimes W_m$. The boxes as above are called {\it
  coupons}.  Each coupon has distinguished bottom and top sides and all
incoming and outgoing arrows can be attached only to these sides.

By a graph we always mean a finite graph with oriented edges (we allow loops
and multiple edges with the same vertices).  By a $\cat$-colored ribbon graph
in an oriented surface $\Sigma$, we mean a graph embedded in $\Sigma$ whose
edges are colored by objects of $\cat$ and whose vertices lying in $\Int
\Sigma=\Sigma-\partial \Sigma $ are thickened to coupons colored by morphisms
of~$\cat$.  The edges of a ribbon graph do not meet each other and may meet
the coupons only at the bottom and top sides.  The intersection of a
$\cat$-colored ribbon graph in $\Sigma$ with $\partial \Sigma$ is required to
be empty or to consist only of vertices of valency~1.

We define a category of $\cat$-colored ribbon graphs $\Graph_\cat$.  The
objects of $\Graph_\cat$ are finite sequences of pairs $(V,\varepsilon)$,
where $V$ is an object of $\cat$ and $\varepsilon=\pm$. The morphisms of
$\Graph_\cat$ are isotopy classes of $\cat$-colored ribbon graphs $\Gamma$
embedded in $\R\times [0,1]$. The (1-valent) vertices of such $\Gamma$ lying
on $\R\times 0$ are called the inputs. The colors and orientations of the
edges of $\Gamma$ incident to the inputs (enumerated from the left to the
right) determine an object of $\Graph_\cat$ called the source of
$\Gamma$. Similarly, the (1-valent) vertices of $\Gamma$ lying on $\R\times 1$
are called the outputs; the colors and orientations of the edges of $\Gamma$
incident to the outputs determine an object of $\Graph_\cat$ called the target
of $\Gamma$.  We view $\Gamma$ as a morphism in $\Graph_\cat$ from the source
object to the target object.  More generally, formal linear combinations over
$K$ of $\cat$-colored ribbon graphs in $\R\times [0,1]$ with the same input
and source are also viewed as morphisms in $\Graph_\cat$. Composition, tensor
multiplication, and left and right duality in $\Graph_\cat$ are defined in the
standard way, cf.  \cite{Tu}. This makes $\Graph_\cat$ into a pivotal
Ab-category.

The well-known Reshetikhin-Turaev construction defines a $K$-linear functor $G
: \Graph_\cat\rightarrow \cat$ preserving both left and right dualities. This
functor is compatible with tensor multiplication and transforms an object $(V,
\varepsilon)$ of $\Graph_\cat$ to $V$ if $\varepsilon=+$ and to $V^*$ if
$\varepsilon=-$.  The definition of $G$ goes by splitting the ribbon graphs
into ``elementary'' pieces, cf.\ \cite{Tu}. The left duality in $\cat$ is used
to assign morphisms in $\cat$ to the small right-oriented cup-like and
cap-like arcs.  The right duality is used to assign morphisms in $\cat$ to the
small left-oriented cup-like and cap-like arcs. Invariance of $G$ under plane
isotopy is deduced from the conditions $f^*= f^\bullet$ and $\gamma_{V,W}=
\gamma'_{V,W}$ in the definition of a pivotal tensor category.

Under certain assumptions, one can thicken the usual (non-ribbon) graphs in
$S^2$ into ribbon graphs.  The difficulty is that thickening of a vertex to a
coupon is not unique. We recall a version of thickening for trivalent graphs
following~\cite{Tu}.

By {\it basic data} in $\cat $ we mean a family $\{V_i\}_{i\in I}$ of simple
objects of $\cat$ numerated by elements of a set $I$ with involution
$I\rightarrow I$, $i\mapsto i^*$ and a family of isomorphisms $\{w_i:V_i \to
V_{i^*}^*\}_{i \in I}$ such that $\Hom_\cat (V_i, V_j)=0$ for distinct $i,j\in
I$ and
\begin{equation}\label{E:FamilyIso}
  d_{V_i}(w_{i^*} \otimes \Id_{V_i})=d'_{V_{i^*}}(\Id_{V_{i^*}}\otimes
  w_i)\colon V_{i^*} \otimes V_{i   }\to \unit 
\end{equation}
for all $i\in I$. In particular, $V_i$ is not isomorphic to $V_j$ for $i\neq
j$.

For any $i,j,k\in I$, consider the multiplicity module
$$
H^{ijk}=\Hom(\unit, V_i\otimes V_j \otimes V_k).
$$
The $K$-modules $H^{ijk},H^{jki},H^{kij}$ are canonically isomorphic. Indeed,
let $\sigma(i,j,k)$ be the isomorphism
$$
H^{ijk} \rightarrow H^{jki},\,\,\,
x \mapsto d_{V_i}\circ (\Id_{V_{i}^*}\otimes x \otimes \Id_{V_i})\circ
b'_{V_i}.
$$
Using the functor $G : \Graph_\cat\rightarrow \cat$, one easily proves that
$$
\sigma(k,i,j) \, \sigma(j,k,i)\, \sigma(i,j,k) =\Id_{H^{ijk}}.
$$
\noindent Identifying the modules $H^{ijk},H^{jki},H^{kij}$ along these
isomorphisms we obtain a {\it symmetrized multiplicity module} $H(i,j,k)$
depending only on the cyclically ordered triple $(i,j,k)$. This construction
may be somewhat disturbing, especially if some (or all) of the indices $i,j,k$
are equal. We give therefore a more formal version of the same
construction. Consider a set $X$ consisting of three (distinct) elements
$\{a,b,c\}$ with cyclic order $a<b<c<a$. For any function $f:X\to I$, the
construction above yields canonical $K$-isomorphisms
$$
H^{f(a), f(b), f(c)} \cong H^{f(b), f(c), f(a)} \cong H^{f(c), f(a), f(b)}
$$
of the modules determined by the linear orders on $X$ compatible with the
cyclic order. Identifying these modules along these isomorphisms we obtain a
module $H(f)$ independent of the choice of a linear order on $X$ compatible
with the cyclic order.  If $i=f(a), j= f(b), k=f(c)$, then we write $H(i,j,k)$
for $H(f)$.

By a labeling of a graph we mean a function assigning to every edge of the
graph an element of $I$. By a trivalent graph we mean a (finite oriented)
graph whose vertices all have valency $3$. Let $\Gamma$ be a labeled trivalent
graph in $S^2$.  Using the standard orientation of $S^2$ (induced by the
right-handed orientation of the unit ball in $ \R^3$), we cyclically order the
set $X_v$ of 3 half-edges adjacent to any given vertex $v$ of $\Gamma$.  The
labels of the edges determine a function $f_v:X_v\to I$ as follows: if a
half-edge $e$ adjacent to $v$ is oriented towards $v$, then $f_v(e)=i$ is the
label of the edge of $\Gamma$ containing $e$; if a half-edge $e$ adjacent to
$v$ is oriented away from $v$, then $f_v(e)=i^*$. Set $H_v(\Gamma)=H(f_v) $
and $H(\Gamma)=\otimes_v \, H_v(\Gamma)$ where $v$ runs over all vertices of
$\Gamma$.

Consider now a labeled trivalent graph $\Gamma\subset S^2 $ as above, endowed
with a family of vectors $h=\{h_v \in H_v(\Gamma)\}_v$, where $v$ runs over
all vertices of $\Gamma$. We thicken $\Gamma$ into a $\cat$-colored ribbon
graph on $S^2$ as follows.  First, we insert inside each edge $e$ of $\Gamma$
a coupon with one edge outgoing from the bottom along $e$ and with one edge
outgoing from the top along $e$ in the direction opposite to the one on
$e$. If $e$ is labeled with $i\in I$, then these two new (smaller) edges are
labeled with $V_i$ and $V_{i^*}$, respectively, and the coupon is labeled with
$w_i:V_i\to V_{i^*}^*$ as in Figure \ref{F:Couponw}.

\begin{figure}[h,t]
  \centering $ \xymatrix{ \:
    \\
    *+[F]\txt{ \; $w_i$ \;} \ar[d]^{V_i} \ar[u]_{V_{i^*}}
    \\
    \: \hspace{40pt} } $
 \caption{}\label{F:Couponw}
\end{figure}

Next, we thicken each vertex $v$ of $\Gamma$ to a coupon so that the three
half-edges adjacent to $v$ yield three arrows adjacent to the top side of the
coupon and oriented towards it. If $i,j,k\in I$ are the labels of these arrows
(enumerated from the left to the right), then we color this coupon with the
image of $h_v$ under the natural isomorphism $H_v(\Gamma)\to H^{ijk}$. Denote
the resulting $\cat$-colored ribbon graph by $\Omega_{\Gamma, h}$. Then
$\mathbb G(\Gamma, h)= G (\Omega_{\Gamma, h})$ is an isotopy invariant of the
pair $(\Gamma, h)$ independent of the way in which the vertices of $\Gamma$
are thickened to coupons.

In the next section we describe a related but somewhat different approach to
invariants of colored ribbon graphs and labeled trivalent graphs in $S^2$.

\section{Cutting of graphs and ambidextrous pairs}

Let $T\subset S^2$ be a $\cat$-colored ribbon graph and let $e$ be an edge of
$T$ colored with a simple object $V$ of $\cat$. Cutting $T$ at a point of $e$,
we obtain a $\cat$-colored ribbon graph $T_V $ in $\R\times [0,1]$ with one
input and one output such that the edges incident to the input and output are
oriented downward and colored with $V$.  In other words, $T_V\in
\End_{\Graph_\cat}((V,+))$.  Note that the closure of $T_V$ (obtained by
connecting the endpoints of $T_V$ by an arc in $S^2=\R^2\cup \{\infty\}$
disjoint from the rest of $T_V$) is isotopic to $T$ in~$S^2$.  We call $T_V $
a \emph{cutting presentation} of $T$ and let $\langle T_V\rangle \, \in K$
denote the isotopy invariant of $T_V$ defined from the equality $G(T_V )= \,
\langle T_V \rangle \, \Id_V$.

In the following definition and in the sequel, a ribbon graph is
\emph{trivalent} if all its coupons are adjacent to 3 half-edges.

\begin{definition}\label{def1}   
  Let $\{V_i, w_i \}_{i \in I}$ be basic data in $\cat$.  Let $I_0$ be a
  subset of $I$ invariant under the involution $i\mapsto i^*$ and $\qd:I_0\to
  \FK$ be a mapping such that $\qd(i)=\qd(i^*)$ for all $i\in I_0$. Let
  $\tet_{I_0}$ be the class of $\cat$-colored connected trivalent ribbon
  graphs in $S^2$ such that the colors of all edges belong to the set $\{V_i
  \}_{i \in I}$ and the color of at least one edge belongs to the set $\{V_i
  \}_{i \in I_0}$. The pair $(I_0,\qd)$ is \emph{trivalent-ambidextrous} if
  for any $T\in \tet_{I_0}$ and for any two cutting presentations $T_{V_i},
  T_{V_j}$ of $T$ with $i,j\in I_0$,
  \begin{equation}\label{El} { \qd}(i)\langle T_{V_i} \rangle \, = { \qd}
    (j)\langle
     T_{V_j} \rangle.
  \end{equation}
\end{definition}
To simplify notation, we will say that the pair $(I_0,\qd)$ in Definition
\ref{def1} is \emph{t-ambi}.  For a t-ambi pair $(I_0,\qd)$ we define a
function $G':\tet_{I_0} \rightarrow \FK$ by
\begin{equation}\label{El+}
  G'(T)={  \qd}(i)\langle T_{V_i}\rangle\, ,
\end{equation}
where $T_{V_i} $ is any cutting presentation of $T$ with $i\in I_0$.  The
definition of a t-ambi pair implies that $G'$ is well defined.

The invariant $G'$ can be extended to a bigger class of $\cat$-colored ribbon
graphs in $S^2$. We say that a coupon of a ribbon graph is straight if both
its bottom and top sides are incident to exactly one arrow.  We can remove a
straight coupon and unite the incident arrows into a (longer) edge, see Figure
\ref{F:smooth}. We call this operation {\it straightening}. A {\it
  quasi-trivalent ribbon graph} is a ribbon graph in $S^2$ such that
straightening it at all straight coupons we obtain a trivalent ribbon graph.

\begin{figure}[h,b]
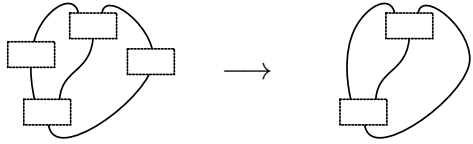

  \centering \hspace{10pt} $\epsh{fig07}{10ex}$ \hspace{10pt}
  $\longrightarrow$ \hspace{10pt} $\epsh{fig08}{10ex} $ \hspace{10pt}
  \caption{Straightening a quasi-trivalent  ribbon  graph}\label{F:smooth}
\end{figure}

\begin{lemma}\label{L:generalized}
  Let $(I_0,\qd)$ be a t-ambi pair in $\cat$ and $\overline \tet_{I_0}$ be the
  class of connected quasi-trivalent ribbon graphs in $S^2$ such that the
  colors of all edges belong to the set $\{V_i \}_{i \in I}$, the color of at
  least one edge belongs to the set $\{V_i \}_{i \in I_0}$, and all straight
  coupons are colored with isomorphisms in~$\cat$. Then Formula \eqref{El+}
  determines a well-defined function $G':\overline \tet_{I_0}\rightarrow \FK$.
\end{lemma}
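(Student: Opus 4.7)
The plan is to reduce the quasi-trivalent setting to the trivalent one already covered by the t-ambi hypothesis. Given $T \in \overline \tet_{I_0}$ with two cutting presentations $T_{V_i}, T_{V_j}$ (where $i,j \in I_0$), I will produce a trivalent graph $T^{\mathrm{str}} \in \tet_{I_0}$ equipped with corresponding cuttings at edges labeled by some $i' \in \{i,i^*\}$ and $j' \in \{j,j^*\}$, together with a scalar $\Lambda \in K$ depending only on $T$, such that
$$\langle T_{V_i}\rangle = \Lambda \langle T^{\mathrm{str}}_{V_{i'}}\rangle \quad\text{and}\quad \langle T_{V_j}\rangle = \Lambda \langle T^{\mathrm{str}}_{V_{j'}}\rangle.$$
Once this is in place, multiplying the t-ambi identity $\qd(i')\langle T^{\mathrm{str}}_{V_{i'}}\rangle = \qd(j')\langle T^{\mathrm{str}}_{V_{j'}}\rangle$ by $\Lambda$ and using the hypothesis $\qd(i)=\qd(i^*)$ yields the desired $\qd(i)\langle T_{V_i}\rangle = \qd(j)\langle T_{V_j}\rangle$.

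The construction of $T^{\mathrm{str}}$ proceeds by normalizing the straight coupons. The basic objects $V_k$ are simple with $\Hom(V_k,V_l)=0$ for $k\neq l$, and moreover $V_k^* \cong V_{k^*}$ via the morphism dual to $w_{k^*}$; Schur's lemma then implies that any isomorphism in $\cat$ between two objects of $\{V_k,V_k^*\}_{k\in I}$ spans a one-dimensional $K$-module. Hence each straight-coupon color $\p_c$ factors uniquely as $\p_c = \lambda_c\,\p_c^{\mathrm{can}}$, where $\lambda_c \in K^{\times}$ and $\p_c^{\mathrm{can}}$ is a canonical iso built from an identity map or one of the maps $w_k^{\pm 1}$ (or its dual). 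Setting $\Lambda = \prod_c \lambda_c$, multilinearity of $G$ in coupon colors shows that replacing every $\p_c$ by $\p_c^{\mathrm{can}}$ scales $G(T_{V_i})$ by $\Lambda^{-1}$ uniformly in the cutting. Since the canonical isos $w_k^{\pm 1}$ are precisely the straight-coupon colors appearing in the thickening $\Omega_{\Gamma,h}$ of a labeled trivalent graph, the normalized version of $T$ coincides with such a thickening for some trivalent $T^{\mathrm{str}} \in \tet_{I_0}$; the $*$-invariance of $I_0$ ensures that $T^{\mathrm{str}}$ inherits the $I_0$-condition.

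Because every straight coupon of $T$ sits in the interior of each cutting $T_{V_i}$, the scalar $\Lambda$ does not depend on the cut edge; and the cutting of $T$ at an edge colored $V_i$ corresponds, via straightening, to a cutting of $T^{\mathrm{str}}$ at the merged straightening-chain edge, whose label is some $i' \in \{i,i^*\}$. The claimed identity follows, and the t-ambi hypothesis applied to $T^{\mathrm{str}}$ completes the proof. The main obstacle is making the normalization fully rigorous: one must case-split by the four orientation patterns of a straight coupon, verify that consecutive straight coupons along a chain compose to a scalar multiple of a canonical iso (so that after normalization each edge of $T^{\mathrm{str}}$ carries at most one $w$-coupon), and confirm that the resulting graph genuinely fits the $\Omega_{\Gamma,h}$ paradigm. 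The degenerate case in which $T$ has no trivalent coupons, so that $T$ is a single circle with only straight coupons, must be treated separately by a direct computation evaluating the product of scalars along the circle.
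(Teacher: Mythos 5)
There is a genuine gap at the final step, and it makes the argument circular. Your plan is to trade $T\in\overline\tet_{I_0}$ for a \emph{trivalent} graph $T^{\mathrm{str}}\in\tet_{I_0}$ and then invoke the t-ambi hypothesis, which by Definition \ref{def1} applies only to graphs all of whose coupons are adjacent to three half-edges. But the object you actually construct --- the normalization of $T$ in which every straight coupon carries a canonical isomorphism $\mathrm{Id}$ or $w_k^{\pm1}$ --- is precisely a graph of the form $\Omega_{\Gamma,h}$, and $\Omega_{\Gamma,h}$ is \emph{not} trivalent: the $w$-coupons are straight coupons, so it lies in $\overline\tet_{I_0}\setminus\tet_{I_0}$. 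A $w$-type straight coupon cannot be deleted by rescaling, because its two adjacent half-edges carry the genuinely different colors $V_i$ and $V_{i^*}$ with opposite orientations; uniting them into a single colored edge forces you to change the colors of the neighboring trivalent coupons (their Hom-spaces involve $V_{i^*}$ versus $V_i^*$), i.e.\ to compose the trivalent coupon colors with $w$-maps --- a structural change that your scalar $\Lambda$ does not account for. So either $T^{\mathrm{str}}$ retains straight coupons (and the identity $\qd(i')\langle T^{\mathrm{str}}_{V_{i'}}\rangle=\qd(j')\langle T^{\mathrm{str}}_{V_{j'}}\rangle$ is exactly the statement being proved, not a hypothesis you may use), or it is the abstract labeled graph $\Gamma$, for which cutting presentations and $\langle\cdot\rangle$ are not even defined. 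Indeed, the whole point of Lemma \ref{L:generalized} is to extend $G'$ to $\Omega_{\Gamma,h}$; identifying the normalized $T$ with $\Omega_{\Gamma,h}$ brings you back to the start.

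The paper avoids this by arguing locally on $T$ itself: for an endomorphism-valued cutting, $\langle fg\rangle=\langle gf\rangle$ whenever $f,g$ are morphisms between simple objects with $f$ invertible, and applying this to a straight coupon shows that cutting $T$ just below it and just above it give the same bracket; combined with $\qd(i)=\qd(i^*)$ for the reversed-orientation patterns, the quantity $\qd(i)\langle T_{V_i}\rangle$ is constant along each straightening-chain, and cuts in different chains are then compared by reducing to the honest trivalent case. To repair your argument you would need to replace ``normalize the straight coupons'' by ``compose each maximal chain of straight coupons into an adjacent trivalent coupon'' (which genuinely lands in $\tet_{I_0}$ and preserves $G$ of every cutting away from the absorbed edges), and then use the commutation identity above to slide the two given cuts of $T$ onto surviving edges. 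Your multilinearity-in-coupon-colors observation and your treatment of the coupon-free circle are fine, but as written the proof does not close.
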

\begin{proof} Given an endomorphism $f$ of a simple object $V$ of $\cat$, we
  write $\langle f \rangle$ for the unique $a\in K$ such that $f=a \Id_V$.
  Observe that if $f:V\to W$ and $g:W\to V$ are homomorphisms of simple
  objects and $f$ is invertible, then
  \begin{equation}\label{El99}
    \langle fg\rangle =\langle gf \rangle\, .
  \end{equation}
  To see this, set $a=\langle fg\rangle\in K $ and $b=\langle gf \rangle\in
  K$. Then
  $$
  a \Id_W= fg(ff^{-1})=f(gf)f^{-1}=b ff^{-1}=b \Id_W\, .
  $$
  Thus, $a=b$.

  Consider now a $\cat$-colored quasi-trivalent ribbon graph $T$ in $S^2$ and
  a straight coupon of $T$ such that the arrow adjacent to the bottom side is
  outgoing and colored with $V_i$, $ i\in I_0 $ while the arrow adjacent to
  the top side is incoming and colored with $V_j$, $ j\in I_0 $.  By
   assumption, the coupon is colored with an isomorphism $ V_i\to V_j$.
  Formula \eqref{El99} implies that $\langle T_{V_i} \rangle = \langle T_{V_j}
  \rangle $, where $T_{V_i}$ is obtained by cutting $T$ at a point of the
  bottom arrow and $T_{V_j}$ is obtained by cutting $T$ at a point of the top
  arrow. By the properties of a t-ambi pair, ${ \qd}(i)={ \qd}(j)$.  Hence
  Formula \eqref{El+} yields the same element of $\FK$ for these two
  cuttings. The cases where some arrows adjacent to the straight coupon are
  oriented up (rather than down) are considered similarly, using the identity
  ${ \qd}(i)={ \qd}(i^*)$. Therefore cutting $T$ at two different edges that
  unite under straightening, we obtain on the right-hand side of \eqref{El+}
  the same element of $\FK$. When we cut $T$ at two edges which do not unite
  under straightening, a similar claim follows from the definitions.
\end{proof}

We can combine the invariant $G'$ with the thickening of trivalent graphs to
obtain invariants of trivalent graphs in $S^2$.  Suppose that $\Gamma\subset
S^2$ is a labeled connected trivalent graph such that the label of at least
one edge of $\Gamma$ belongs to $I_0$. We define
$$
\mathbb{G'}(\Gamma)\in
H(\Gamma)^\star=\Hom_K(H(\Gamma),K)
$$
as follows. Pick any family of vectors $h=\{h_v \in H_v(\Gamma)\}_v$, where
$v$ runs over all vertices of $\Gamma$. The $\cat$-colored ribbon graph
$\Omega_{\Gamma, h}$ constructed at the end of Section \ref{S:InvOfGraphs}
belongs to the class $\overline \tet_{I_0}$ defined in Lemma
\ref{L:generalized}.  Set
$$
\mathbb{G}'(\Gamma)(\otimes_v h_v)=G'(\Omega_{\Gamma, h}) \in \FK.
$$
By the properties of $G'$, the vector $\mathbb{G}'(\Gamma)\in H(\Gamma)^\star$
is an isotopy invariant of~$\Gamma$.  Both $H(\Gamma)$ and
$\mathbb{G'}(\Gamma)$ are preserved under the reversion transformation
inverting the orientation of an edge of $\Gamma$ and replacing the label of
this edge, $i$, with $i^*$. This can be easily deduced from Formula
\eqref{E:FamilyIso}.

\section{Modified $6j$-symbols}\label{S:Modified6j}

Let $\cat$ be a pivotal tensor Ab-category with ground ring $K$, basic data
$\{V_i, w_i:V_i \to V_{i^*}^*\}_{i \in I}$, and t-ambi pair $(I_0,\qd)$. To
simplify the exposition, we assume that there is a well-defined direct
summation of objects in $\cat$.  We define a system of tensors called modified
$6j$-symbols.

Let $i,j,k,l,m,n$ be six elements of $ I$ such that at least one of them is in
$I_0$.  Consider the labeled trivalent graph
$\Gamma=\Gamma(i,j,k,l,m,n)\subset\R^2\subset S^2$ given in Figure
\ref{F:Gamma7777}. By definition,
$$
H(\Gamma)= H(i,j,k^*) \otimes_K H(k,l,m^*) \otimes_K H(n,l^*, j^*)\otimes_K
H(m,n^*, i^*).
$$
\begin{figure}[h, b]
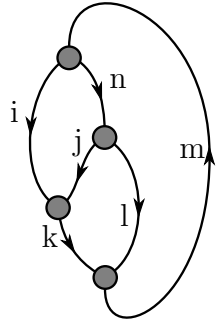

  \centering \hspace{10pt} $ \epsh{fig02}{120pt}\pic{.8pt}
  {\put(-83,-49){k}\put(-51,26){n}\put(-98,10){i}
    \put(-68,-3){j}\put(-46,-39){l}\put(-18,-7){m}} $
  \caption{$\Gamma(i,j,k,l,m,n)$}\label{F:Gamma7777}
\end{figure}

We define the modified $6j$-symbol of the tuple $(i,j,k,l,m,n)$ to be
\begin{equation}\label{mss}
  \sjtop ijklmn=\mathbb{G}'(\Gamma)\in H(\Gamma)^\star=\Hom_K(H(\Gamma), K).
\end{equation}
It follows from the definitions that the modified $6j$-symbols have the
symmetries of an oriented tetrahedron.  In particular,
$$
\sjtop ijklmn = \sjtop j{k^*}{i^*}mnl = \sjtop klm{n^*}{i}{j^*}.
$$
These equalities hold because the labeled trivalent graphs in $S^2$ defining
these $6j$-symbols are related by isotopies and reversion transformations
described above.

To describe $H(\Gamma)^\star$, we compute the duals of the symmetrized
multiplicity modules.  For any indices $i,j,k\in I$ such that at least one of
them lies in $I_0$, we define a pairing
\begin{equation} \label{E:pairing1-} (,)_{ijk}: H(i,j,k)\otimes_K H(k^*, j^*,
  i^*)\to K
\end{equation}
by
$$
  (x,y)_{ijk}=\mathbb G'(\Theta) (x\otimes y)\, ,
$$
where $x\in H(i,j,k)=H^{ijk}$ and $y\in H(k^*, j^*, i^*)=H^{k^* j^* i^*}$ and
$\Theta=\Theta_{i,j,k} $ is the theta graph with vertices $u,v$ and three
edges oriented from $v$ to $u$ and labeled with $i,j,k$, see\ Figure
\ref{F:Thetaijk}. Clearly,
$$H_u(\Theta)=H^{ijk}=H(i,j,k)\,\, \,\, {\text {and}} \,\,\,\,
H_v(\Theta)=H^{k^*j^*i^*}=H(k^*,j^*, i^*)$$ so that we can use $x$, $y$ as the
colors of $u$, $v$, respectively.  It follows from the definitions that the
pairing $(,)_{ijk}$ is invariant under cyclic permutations of $i,j,k$ and
$(x,y)_{ijk}=(y,x)_{k^*j^*i^*}$ for all $x\in H(i,j,k) $ and $y\in H(k^*, j^*,
i^*)$.

\begin{figure}[h, b]
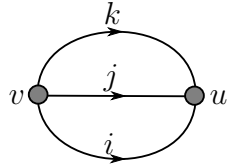

 \centering \hspace{10pt} $\epsh{fig09}{10ex} $
  \put(-38,-19){$i$}\put(-38,7){$j$}\put(-38,30){$k$}
  \put(-74,0){$v$}\put(2,0){$u$}
  \caption{$\Theta_{i,j,k}$}\label{F:Thetaijk}
\end{figure}

We now give sufficient conditions for the pairing $(,)_{ijk}$ to be
non-degenerate.  We say that a pair $(i,j)\in I^2$ is \emph{good} if
$V_i\otimes V_j$ splits as a finite direct sum of some $V_k$'s (possibly with
multiplicities) such that $k\in I_0$ and $\qd(k) $ is an invertible element of
$K$. By duality, a pair $(i,j)\in I^2$ is good if and only if the pair $(j^*,
i^*)$ is good. We say that a triple $(i,j,k)\in I^3$ is \emph{good} if at
least one of the pairs $(i,j), (j,k), (k, i)$ is good. Clearly, the goodness
of a triple $(i,j,k) $ is preserved under cyclic permutations and implies the
goodness of the triple $( k^*, j^*, i^*) $. Note also that if a triple
$(i,j,k) $ is good, then at least one of the indices $i,j,k$ belongs to $I_0$
so that we can consider the pairing (\ref{E:pairing1-}).

\begin{lemma}\label{L:nondegeneracy} If the triple $(i,j,k)\in I^3$ is good,
  then the pairing \eqref{E:pairing1-} is non-degenerate.
\end{lemma}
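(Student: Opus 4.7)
By the stated invariance of $(,)_{ijk}$ under cyclic permutations of $(i,j,k)$ and the analogous cyclic symmetry of the goodness condition, we may assume that the pair $(i,j)$ itself is good. Fix a decomposition
\begin{equation*}
V_i\otimes V_j\;\cong\;\bigoplus_{\alpha\in A} V_{k_\alpha}
\qquad\text{with } k_\alpha\in I_0 \text{ and } \qd(k_\alpha) \text{ invertible in } K,
\end{equation*}
together with structural projections $p_\alpha\colon V_i\otimes V_j\to V_{k_\alpha}$ and injections $q_\alpha\colon V_{k_\alpha}\to V_i\otimes V_j$ satisfying $p_\alpha q_\beta=\delta_{\alpha\beta}\Id$ and $\sum_\alpha q_\alpha p_\alpha=\Id_{V_i\otimes V_j}$.

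Using the left duality and the isomorphism $V_k^*\cong V_{k^*}$ coming from the basic-data map $w_k$, one gets
\begin{equation*}
H^{ijk}\;\cong\;\Hom(V_k^*,\,V_i\otimes V_j)\;\cong\;\bigoplus_{\alpha\in A}\Hom(V_k^*,\,V_{k_\alpha}).
\end{equation*}
Since the family $\{V_i\}_{i\in I}$ consists of pairwise non-isomorphic simple objects and $V_k^*\cong V_{k^*}$, the $\alpha$-th summand equals $K$ when $k_\alpha=k^*$ and vanishes otherwise. Writing $A_*=\{\alpha\in A\colon k_\alpha=k^*\}$, the module $H^{ijk}$ is thus free of rank $|A_*|$ with a canonical basis $\{e_\alpha\}_{\alpha\in A_*}$, each $e_\alpha$ supported on the $\alpha$-th summand. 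A symmetric argument identifies $H^{k^*j^*i^*}$ with $\Hom(V_i\otimes V_j,\,V_{k^*})$ and produces a basis $\{f_\beta\}_{\beta\in A_*}$ built from the projections $p_\beta$.

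To compute the pairing matrix $\bigl((e_\alpha,f_\beta)_{ijk}\bigr)$, I would insert the resolution of identity $\Id_{V_i\otimes V_j}=\sum_\gamma q_\gamma p_\gamma$ into the ribbon thickening $\Omega_{\Theta,\,e_\alpha\otimes f_\beta}$ of $\Theta=\Theta_{i,j,k}$, along the pair of strands running from one trivalent vertex to the other. The result is a quasi-trivalent ribbon graph in the sense of Lemma~\ref{L:generalized}, and $K$-linearity of $\mathbb{G}'$ presents the pairing as $\sum_\gamma \mathbb{G}'(B_{\alpha,\beta,\gamma})$, where $B_{\alpha,\beta,\gamma}$ is the bubble with a single $V_{k_\gamma}$-edge parallel to a $V_k$-edge, and whose two trivalent vertices factor through $p_\gamma q_\alpha$ and $p_\beta q_\gamma$. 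By orthogonality these factorizations vanish unless $\gamma=\alpha=\beta$. In the surviving case, the bubble carries an edge coloured $V_{k_\alpha}=V_{k^*}$ with $\qd(k_\alpha)$ invertible; cutting at this edge via the t-ambi property yields $\mathbb{G}'(B_{\alpha,\alpha,\alpha})=\qd(k_\alpha)\,c_\alpha$ for some invertible $c_\alpha\in K$, the scalar $c_\alpha$ being the unique one produced by composing $b_{V_k}$, $w_k$, and the duality data on $V_{k^*}$.

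Hence the pairing matrix is diagonal with invertible entries, which proves non-degeneracy. The main obstacle is the graph-theoretic step: one must verify that the insertion of $\sum_\gamma q_\gamma p_\gamma$ really yields a quasi-trivalent ribbon graph whose straight coupons are coloured by isomorphisms of simple objects so that Lemma~\ref{L:generalized} applies, and that the normalizing scalar $c_\alpha$ is invertible; both reduce to standard pivotal manipulations and the identity \eqref{E:FamilyIso}.
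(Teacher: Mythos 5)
Your argument is correct and follows essentially the same route as the paper: after the cyclic reduction to the case where $(i,j)$ is good, the paper transports the pairing $(,)_{ijk^*}$ to the composition pairing $(y,x)\mapsto \qd(k)\langle xy\rangle$ on $\Hom(V_k,V_i\otimes V_j)\otimes_K \Hom(V_i\otimes V_j,V_k)$ and observes that the latter is non-degenerate, which is exactly your Gram-matrix computation in the dual bases $\{q_\alpha\}$, $\{p_\beta\}$ coming from the decomposition of $V_i\otimes V_j$. The only difference is presentational: the paper isolates the isomorphisms $a_k^{ij}$, $a^k_{ij}$ once and asserts the compatibility of the two pairings, whereas you unpack that compatibility graphically via the resolution of identity; the checks you defer (that the straight coupons are colored by isomorphisms, or else the term vanishes, and that the normalizing scalar is invertible) are indeed routine.
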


\begin{proof} For any $i,j,k\in I$, consider the $K$-modules
  \begin{align*}
    \label{}
    H^{ij}_k = \Hom(V_k,V_i \otimes V_j) \quad {\text{and}} \quad H_{ij}^k &=
    \Hom(V_i \otimes V_j,V_k).
  \end{align*}
  Consider the homomorphism
  \begin{equation}\label{E:Hiso}
    H^{ij}_k \rightarrow
    H^{ijk^*}, \,\, y \mapsto (y \otimes w_{k^*}^{-1})\, b_{V_k}.
  \end{equation}
  It is easy to see that this is an isomorphism.  Composing with the canonical
  isomorphism $H^{ijk^*} \rightarrow H(i,j,k^*)$ we obtain an isomorphism
  $a_k^{ij}:H_k^{ij}\rightarrow H(i,j,k^*)$.  Similarly, we have an
  isomorphism
  $$
  a^k_{ij}:H_{ij}^k \rightarrow H^{kj^*i^*}=H(k,j^*,i^*), \,\, x \mapsto
  (x\otimes w_{j^*}^{-1} \otimes w_{i^*}^{-1})(\Id_{V_i}\otimes b_{V_j}
  \otimes \Id_{V^*_i})b_{V_i}.
  $$

  Let $(,)^{ij}_k:H^{ij}_k \otimes_K H^k_{ij} \rightarrow \FK$ be the bilinear
  pairing whose value on any pair $(y\in H^{ij}_k, x\in H^k_{ij})$ is computed
  from the equality $$(y,x)^{ij}_k \Id_{V_k}= \qd(k) xy:V_k\to V_k.$$ It
  follows from the definitions that under the isomorphism
  $$
  a_{k}^{ij}\otimes a^k_{ij}:H^{ij}_k \otimes_K H^k_{ij}\to
  H(i,j,k^*)\otimes_K H(k,j^*,i^*)
  $$
  the pairing $(,)^{ij}_k$ is transformed into $(,)_{ijk^*}$.

  We can now prove the claim of the lemma.  By cyclic symmetry, it is enough
  to consider the case where the pair $(i,j)$ is good. Then $H^{ij}_k $ and $
  H^k_{ij}$ are free $K$-modules of the same finite rank and the pairing
  $(,)^{ij}_k$ is non-degenerate. Therefore the pairing $(,)_{ijk^*}$ is
  non-degenerate.
\end{proof}

We say that a 6-tuple $(i,j,k,l,m,n)\in I^6$ is \emph{good} if all the triples
\begin{equation}\label{tritri}
(i,j,k^*), (k,l,m^*), (n,l^*, j^*), (m,n^*, i^*)
\end{equation}
are good.  This property of a 6-tuple is invariant under symmetries of an
oriented tetrahedron acting on the labels of the edges.  The previous lemma
implies that if $(i,j,k,l,m,n)$ is good, then the ambient module of the
modified $6j$-symbol \eqref{mss} is
\begin{equation}\label{momo}
 H(k,j^*,i^*) \otimes_K
H(m,l^*, k^*) \otimes_K H(j,l, n^* )\otimes_K H(i,n,m^*).
\end{equation}

\section{Comparison with the standard $6j$-symbols}\label{S:Prop}

We compare the modified $6j$-symbols defined above with standard $6j$-symbols
derived from decompositions of tensor products of simple objects into direct
sums.

We keep notation of Section \ref{S:Modified6j} and  begin with two
simple lemmas.

\begin{lemma}
  For any $i,j, k,l,m\in I$, the formula $(f,g)\mapsto (f\otimes \Id_{V_l})g$
  defines a $K$-homomorphism
\begin{equation}
  \label{E:homo1}
  H^{ij}_k \otimes_K H_m^{kl}  \rightarrow \Hom(V_m,V_i\otimes V_j\otimes V_l).
\end{equation}
If $(i,j)$ is good, then the direct sum of these homomorphisms is an
isomorphism
\begin{equation}
  \label{E:iso1}
  \bigoplus_{k\in I}H^{ij}_k \otimes_K H_m^{kl}  \rightarrow \Hom(V_m,V_i\otimes
  V_j\otimes V_l).
\end{equation}
\end{lemma}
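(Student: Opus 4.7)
The first assertion is immediate: the formula $(f,g)\mapsto (f\otimes \Id_{V_l})g$ is bilinear in $(f,g)$ because composition and tensor multiplication of morphisms are bilinear (this is part of being an Ab-category), so it descends to a $K$-linear map on the tensor product.

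For the second assertion, the plan is to use the goodness hypothesis to construct an explicit inverse. Goodness of $(i,j)$ gives a direct sum decomposition $V_i\otimes V_j \cong \bigoplus_{k\in I} V_k^{\oplus n_k}$ with only finitely many nonzero $n_k$. This decomposition is encoded by morphisms $\iota_k^\alpha\colon V_k\to V_i\otimes V_j$ and $p_k^\alpha\colon V_i\otimes V_j\to V_k$ (for $\alpha=1,\dots,n_k$) satisfying $\sum_{k,\alpha}\iota_k^\alpha p_k^\alpha=\Id_{V_i\otimes V_j}$ and $p_k^\alpha\iota_{k'}^{\alpha'}=\delta_{kk'}\delta_{\alpha\alpha'}\Id_{V_k}$. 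Here the orthogonality relations follow from simplicity of the $V_k$ together with the assumption $\Hom(V_k,V_{k'})=0$ for $k\neq k'$ built into basic data, and the fact that $\End(V_k)=K\Id_{V_k}$. Consequently $\{\iota_k^\alpha\}_\alpha$ is a $K$-basis of the free module $H^{ij}_k$ of rank $n_k$, and $\{p_k^\alpha\}_\alpha$ is the dual basis of the free module $\Hom(V_i\otimes V_j, V_k)$.

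Now I would define the candidate inverse
$$\Psi\colon \Hom(V_m,V_i\otimes V_j\otimes V_l)\longrightarrow \bigoplus_{k\in I}H^{ij}_k\otimes_K H^{kl}_m, \qquad \Psi(\varphi)=\sum_{k,\alpha}\iota_k^\alpha\otimes (p_k^\alpha\otimes \Id_{V_l})\varphi$$
(a finite sum, since only finitely many $n_k$ are nonzero). Let $\Phi$ denote the map of \eqref{E:iso1}. The check $\Phi\circ\Psi=\Id$ is a one-line computation:
$$\Phi(\Psi(\varphi))=\sum_{k,\alpha}(\iota_k^\alpha p_k^\alpha\otimes \Id_{V_l})\varphi=(\Id_{V_i\otimes V_j}\otimes\Id_{V_l})\varphi=\varphi.$$
For $\Psi\circ\Phi=\Id$, it suffices to check on an element of the form $f\otimes g\in H^{ij}_{k_0}\otimes_K H^{k_0 l}_m$; expanding $f=\sum_\beta c^\beta\iota_{k_0}^\beta$ and using the orthogonality $p_k^\alpha \iota_{k_0}^\beta=\delta_{kk_0}\delta_{\alpha\beta}\Id_{V_{k_0}}$ reduces this to $\sum_\alpha c^\alpha \iota_{k_0}^\alpha\otimes g=f\otimes g$.

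There is no real obstacle: the entire argument is the standard "decompose-and-read-off-components" trick, and the only points requiring attention are the finiteness of the decomposition (ensuring $\Psi$ lands in the direct sum rather than the direct product) and the bookkeeping that the orthogonality of $\{\iota_k^\alpha\}$ and $\{p_k^\alpha\}$ follows correctly from simplicity. The invertibility condition on $\qd(k)$ that appears in the definition of goodness plays no role in this lemma; only the existence of a finite decomposition into simples of the form $V_k$ is used.
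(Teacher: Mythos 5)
Your proposal is correct and follows essentially the same route as the paper: fix the inclusion/projection morphisms realizing the finite decomposition of $V_i\otimes V_j$ (the paper's bases $\{\alpha_{k,r}\}$ of $H^{ij}_k$ and dual $\{\alpha^{k,r}\}$ of $H^k_{ij}$ are exactly your $\iota_k^\alpha$, $p_k^\alpha$), and define the inverse by $f\mapsto\sum_{k,r}\alpha_{k,r}\otimes(\alpha^{k,r}\otimes\Id_{V_l})f$. Your write-up merely makes explicit the verification that this is a two-sided inverse, which the paper leaves to the reader.
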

\begin{proof} 
  The first claim is obvious; we prove the second claim.  If $(i,j)$ is good,
  then for every $k\in I$, the $K$-modules $H^{ij}_k$ and $H_{ij}^k$ are free
  of the same finite rank. Fix a basis $\{\alpha_{k,r}\}_{r\in R_k}$ of
  $H^{ij}_k$ where $R_k$ is a finite indexing set.  Then there is a basis
  $\{\alpha^{k,r}\}_{r\in R_k}$ of $H_{ij}^k$ such that $
  \alpha^{k,r}\alpha_{k,s}=\delta_{r,s}\Id_{V_k}$ for all $r,s \in R_k$.
  Clearly,
  $$
  \Id_{V_i\otimes V_j}=\sum_{k\in I, r\in R_k} \alpha_{k,r}\alpha^{k,r}.
  $$
  We define a $K$-homomorphism $\Hom(V_m,V_i\otimes V_j\otimes V_l)\rightarrow
  \bigoplus_{k\in I} H^{ij}_k \otimes H_m^{kl} $ by
  $$
  f\mapsto \sum_{k\in I,\, r\in R_k}\alpha_{k,r}\otimes_K \, ( \alpha^{k,r}
  \otimes \Id_{V_l})f.
  $$
  This is the inverse of the homomorphism~\eqref{E:iso1}.
\end{proof}
Similarly, we have the following lemma.
\begin{lemma}\label{BH}
  For any $i,j,l,m,n \in I$, the formula $(f,g)\mapsto (\Id_{V_i}\otimes f)g$
  defines a $K$-homomorphism
\begin{equation}
\label{E:homo2}
H^{jl}_n \otimes_K H_m^{in}  \rightarrow \Hom(V_m,V_i\otimes V_j\otimes V_l).
\end{equation}
If the pair $(j,l)$ is good, then the direct sum of these homomorphisms is an
isomorphism
\begin{equation}
  \label{E:iso2}
  \bigoplus_{n\in I} H^{jl}_n \otimes_K H_m^{in} \rightarrow
  \Hom(V_m,V_i\otimes V_j\otimes V_l).
\end{equation}
\end{lemma}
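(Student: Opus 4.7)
The proof I would give is a direct parallel of the proof of the preceding lemma, with the roles of the pair $(i,j)$ on the left replaced by the pair $(j,l)$ in the middle/right. The plan is as follows.

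First, the claim that \eqref{E:homo2} is a $K$-homomorphism is immediate from bilinearity of composition and the functoriality of tensor product: for $f\in H^{jl}_n$ and $g\in H^{in}_m$, the assignment $(f,g)\mapsto (\Id_{V_i}\otimes f)g$ is $K$-bilinear and lands in $\Hom(V_m, V_i\otimes V_j\otimes V_l)$ because the codomain of $(\Id_{V_i}\otimes f)g$ is $V_i\otimes V_j\otimes V_l$.

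For the second claim, I would invoke goodness of $(j,l)$ exactly as in the previous proof. By the definition of a good pair and the discussion leading to Lemma \ref{L:nondegeneracy}, $V_j\otimes V_l$ decomposes as a finite direct sum of some $V_n$ with $n\in I_0$ and $\qd(n)\in K^\times$, and for each such $n$ the $K$-modules $H^{jl}_n$ and $H^n_{jl}$ are free of the same finite rank. Fix a basis $\{\beta_{n,r}\}_{r\in R_n}$ of $H^{jl}_n$ and the dual basis $\{\beta^{n,r}\}_{r\in R_n}$ of $H^n_{jl}$ satisfying $\beta^{n,r}\beta_{n,s}=\delta_{r,s}\Id_{V_n}$. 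The decomposition of $\Id_{V_j\otimes V_l}$ then reads
$$
\Id_{V_j\otimes V_l}=\sum_{n\in I,\, r\in R_n}\beta_{n,r}\beta^{n,r},
$$
where only finitely many terms are nonzero.

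With this in hand, I would define the candidate inverse
$$
\Hom(V_m, V_i\otimes V_j\otimes V_l)\longrightarrow \bigoplus_{n\in I} H^{jl}_n\otimes_K H^{in}_m,
\qquad f\longmapsto \sum_{n,r}\beta_{n,r}\otimes (\Id_{V_i}\otimes \beta^{n,r})f,
$$
and verify on each summand that it is a two-sided inverse of \eqref{E:iso2}. Composing in one direction uses $\beta^{n,r}\beta_{n,s}=\delta_{r,s}\Id_{V_n}$ together with simplicity of $V_n$ (so that $\beta^{n,r}\beta_{n,s}$ factors through scalars); composing in the other uses the resolution of $\Id_{V_j\otimes V_l}$ above, tensored with $\Id_{V_i}$, to rebuild $f$.

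The only step with any subtlety is justifying that $H^{jl}_n$ and $H^n_{jl}$ are genuinely dual free modules of the same finite rank; but this is exactly the content extracted from the proof of Lemma \ref{L:nondegeneracy}, applied to the good pair $(j,l)$ rather than $(i,j)$. I do not expect any real obstacle—this lemma is the ``right-handed'' mirror of the previous one, and every step transports verbatim under the obvious relabeling.
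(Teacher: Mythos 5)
Your proposal is correct and is exactly the argument the paper intends: the paper proves the preceding lemma by resolving $\Id_{V_i\otimes V_j}$ through dual bases of $H^{ij}_k$ and $H^k_{ij}$ and exhibiting the explicit inverse, and for Lemma \ref{BH} it simply says ``Similarly,'' leaving the reader to transport that proof to the good pair $(j,l)$, which is precisely what you have done. The only point worth making explicit is that for $n\neq n'$ the composite $\beta^{n',r}\beta_{n,s}$ vanishes because $\Hom(V_n,V_{n'})=0$ by the definition of basic data, which is what makes the cross terms drop out in the second composition.
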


Suppose that both pairs $(i,j)$ and $(j,l)$ are good.  Composing the
isomorphism \eqref{E:iso1} with the inverse of the isomorphism
\eqref{E:iso2} we obtain an isomorphism
\begin{equation}
  \label{E:iso4}
  \bigoplus_{k\in I}H^{ij}_k \otimes_K H_m^{kl}  \rightarrow \bigoplus_{n\in
    I} H^{jl}_n \otimes_K H_m^{in}.
\end{equation}
Restricting to the summand on the left-hand side corresponding to $k$ and
projecting to the summand on the right-hand side corresponding to $n$ we
obtain a homomorphism
\begin{equation}
  \label{E:iso3}
  \sj ijklmn : H_k^{ij} \otimes_K H_m^{kl} \rightarrow H^{jl}_n \otimes_K
  H_m^{in}.
\end{equation}
This is the (standard) $6j$-symbol determined by $ i,j,k,l,m,n$. We emphasize
that it is defined only when the pairs $(i,j)$ and $(j,l)$ are good.  Note
that this $6j$-symbol is equal to zero unless both $k$ and $n$ belong to $
I_0$.

The following equality is an equivalent graphical form of the same
definition. It indicates that the composition of the homomorphisms
\eqref{E:iso3} and \eqref{E:homo2} summed up over all $n\in I$ is equal to the
homomorphism \eqref{E:homo1}:
$$
\pic{1ex}{
  \begin{picture}(6,11)(0,0)
    \multiput(2,2)(4,0){2}{\line(0,1){2}}
    \multiput(2,2)(0,2){2}{\line(1,0){4}}
    \multiput(0,7)(4,0){2}{\line(0,1){2}}
    \multiput(0,7)(0,2){2}{\line(1,0){4}}
    \put(1,11){\vector(0,-1){2}}
    \put(3,11){\vector(0,-1){2}}
    \put(5,11){\vector(0,-1){7}}
    \put(2,7){\vector(1,-3){1}}
    \put(4,2){\vector(0,-1){2}}
    \put(1.2,10){$\mathsmall i$}
    \put(3.2,10){$\mathsmall j$}
    \put(5.2,10){$\mathsmall l$}
    \put(1.4,5){$\mathsmall k$}
    \put(4.2,1){$\mathsmall m$}
  \end{picture}} \,=\, \sum_{n\in I}\,
\pic{1ex}{
  \begin{picture}(6,11)(0,0)
    \multiput(0,2)(4,0){2}{\line(0,1){2}}
    \multiput(0,2)(0,2){2}{\line(1,0){4}}
    \multiput(2,7)(4,0){2}{\line(0,1){2}}
    \multiput(2,7)(0,2){2}{\line(1,0){4}}
    \put(5,11){\vector(0,-1){2}}
    \put(3,11){\vector(0,-1){2}}
    \put(1,11){\vector(0,-1){7}}
    \put(3.9,7){\vector(-1,-3){1}}
    \put(2,2){\vector(0,-1){2}}
    \put(1.2,10){$\mathsmall i$}
    \put(3.2,10){$\mathsmall j$}
    \put(5.2,10){$\mathsmall l$}
    \put(4,5){$\mathsmall n$}
    \put(2.2,1){$\mathsmall m$}
  \end{picture}}\circ \sj ijklmn\, .
$$

Our next aim is to relate the $6j$-symbol \eqref{E:iso3} to the modified
$6j$-symbol \eqref{mss}.  Recall the isomorphism
$a_{k}^{ij}:H_k^{ij}\rightarrow H(i,j,k^*)$ introduced in the proof of Lemma
\ref{L:nondegeneracy}. Using these isomorphisms, we can rewrite the
$6j$-symbol \eqref{E:iso3} as a homomorphism
\begin{equation}\label{hl}
  H(i,j,k^*) \otimes_K H(k,l,m^*) \rightarrow H(j,l,n^*) \otimes_K
  H(i,n,m^*).
\end{equation}
Since $(j,l)$ is good, $H( j,l, n^*)^\star=H(n,l^*,j^*)$. Assuming that
$(i,n)$ is good, we write $H(i,n,m^*)^\star=H(m,n^*, i^*)$ and consider the
homomorphism
$$
\sjs ijklmn: H(i,j,k^*) \otimes_K H(k,l,m^*) \otimes_K  H(n,l^*,j^*)
\otimes_K H(m,n^*, i^*) \to K
$$
adjoint to (\ref{hl}). This homomorphism has the same source module as the
modified $6j$-symbol \eqref{mss}.

\begin{lemma}
  \label{L:two6j}
  For any $i,j,k,l,m,n\in I$ such that the pairs $(i,j), (j,l)$, $(i,n)$ are
  good and $m,n\in I_0$,
  \begin{equation}
  \label{E:sjtopsym}
  \sjs ijklmn= \qd(n) \sjtop ijklmn
  \, .
  \end{equation}
\end{lemma}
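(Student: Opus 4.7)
The plan is to evaluate both sides on an arbitrary pure tensor $x_1\otimes x_2\otimes x_3\otimes x_4\in H(i,j,k^*)\otimes H(k,l,m^*)\otimes H(n,l^*,j^*)\otimes H(m,n^*,i^*)$ and match them by cutting the thickened tetrahedral graph $\Omega_{\Gamma,h}$ at the edge labeled $n$, which is legal since $n\in I_0$.

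For the right-hand side, I would take $h=(x_1,x_2,x_3,x_4)$ and use the defining formula $\mathbb{G}'(\Gamma)(\otimes_v h_v)=G'(\Omega_{\Gamma,h})=\qd(n)\langle(\Omega_{\Gamma,h})_{V_n}\rangle$, where the cutting is performed on the edge colored $V_n$. For the left-hand side, I would unfold $\sjs ijklmn$ via its adjoint description: pull $x_1,x_2$ back through the isomorphisms $a^{ij}_k,\,a^{kl}_m$ to obtain $f\in H^{ij}_k$ and $g\in H^{kl}_m$, apply the standard $6j$-symbol to produce the expansion
\[
(f\otimes \Id_{V_l})g = \sum_{n'}\sum_t(\Id_{V_i}\otimes\alpha_t^{n'})\beta_t^{n'},\qquad \alpha_t^{n'}\in H^{jl}_{n'},\ \beta_t^{n'}\in H^{n'l}_m,
\]
and then isolate the $n'=n$ summand and pair it against $x_3,x_4$ using the non-degenerate pairings $(,)_{jln^*}$ and $(,)_{inm^*}$ (Lemma \ref{L:nondegeneracy}), which under the isomorphisms $a$ become the pairings $(,)^{jl}_n$ and $(,)^{in}_m$ defined in the proof of Lemma \ref{L:nondegeneracy} through $(\alpha,\beta)^{pq}_r\Id_{V_r}=\qd(r)\beta\alpha$.

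The key step is the diagrammatic identification: when one cuts $\Omega_{\Gamma,h}$ at the $V_n$-edge, the resulting endomorphism of $V_n$ factors as a composition in which the two vertices not meeting the cut contribute precisely the morphisms $f$ and $g$ (arranged so that $(f\otimes\Id_{V_l})g$ appears), while the two vertices meeting the cut contribute morphisms that, after rewriting through the dualities $w_i$ and the isomorphisms $a$ in both directions, become elements of $H^{jl}_n$ and $H^{in}_m$ dual to $x_3$ and $x_4$ under the pairings $(,)^{jl}_n,(,)^{in}_m$. Substituting the $6j$-expansion of $(f\otimes\Id_{V_l})g$ and using the defining identity $(\alpha,\beta)^{pq}_r\Id_{V_r}=\qd(r)\beta\alpha$ for the two resulting $V_n$-endomorphisms converts $\langle(\Omega_{\Gamma,h})_{V_n}\rangle$ into the pairing expression computing $\sjs ijklmn(x_1\otimes x_2\otimes x_3\otimes x_4)$, but divided by $\qd(n)$. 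Multiplying by the prefactor $\qd(n)$ coming from the definition of $G'$ then yields exactly $\sjs ijklmn$, proving \eqref{E:sjtopsym}.

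The main obstacle will be bookkeeping: keeping track of the cyclic-symmetrization isomorphisms $\sigma$, of the duality maps $w_i$ inserted at each edge of the thickening, and of the orientations of the cut so that the natural identifications $H^{jl}_n\cong H(j,l,n^*)^\star$ and $H^{in}_m\cong H(m,n^*,i^*)^\star$ line up correctly with the pairings $(,)_{jln^*}$, $(,)_{inm^*}$. Once those identifications are made compatible — which uses the cyclic symmetry of $(,)_{ijk}$ and the compatibility \eqref{E:FamilyIso} of the $w_i$ with left/right duality — the combinatorial identity of the two sides reduces to the defining relation of the standard $6j$-symbol together with the formula $(y,x)^{ij}_k\Id_{V_k}=\qd(k)xy$, producing the factor $\qd(n)$ on the nose.
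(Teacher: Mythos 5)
Your strategy is the paper's own: thicken $\Gamma$, expand $(x_1\otimes \Id_{V_l})x_2$ through the standard $6j$-symbol, and collapse the two resulting local subgraphs against $x_3$ and $x_4$ via the pairings $(\,,)^{jl}_n$ and $(\,,)^{in}_m$. However, the $\qd(n)$ bookkeeping --- which is the entire content of the lemma --- is off by one power as you have written it, and with your count the argument would prove $\sjtop ijklmn=\sjs ijklmn$ rather than \eqref{E:sjtopsym}: if $\langle(\Omega_{\Gamma,h})_{V_n}\rangle$ were $\qd(n)^{-1}$ times the pairing sum, then $G'(\Omega_{\Gamma,h})=\qd(n)\langle(\Omega_{\Gamma,h})_{V_n}\rangle$ would equal the pairing sum itself, i.e.\ the modified and unmodified symbols would coincide.

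Concretely, write ${\sj ijklmn}_+(x_1\otimes x_2\otimes x_3\otimes x_4)=\sum_r (y_1^{n,r},x_3)^{jl}_n\,(y_2^{n,r},x_4)^{in}_m$ for the adjoint form of the standard symbol. After cutting at the $n$-edge, substituting the expansion and isolating $n'=n$, the endomorphism of $V_n$ is a composite in which \emph{each} of the two local evaluations contributes a factor $\qd(n)^{-1}$: the $(j,l)$-bigon gives $x_3\circ y_1^{n,r}=\qd(n)^{-1}(y_1^{n,r},x_3)^{jl}_n\,\Id_{V_n}$, and the residual theta graph on $y_2^{n,r}$, $x_4$ with edges $i,n,m$, \emph{read as an endomorphism of $V_n$}, equals $\qd(n)^{-1}(y_2^{n,r},x_4)^{in}_m\,\Id_{V_n}$ (by t-ambidexterity, since its evaluation at the $m$-cut is $\qd(m)^{-1}(y_2^{n,r},x_4)^{in}_m$). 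Hence $\langle(\Omega_{\Gamma,h})_{V_n}\rangle=\qd(n)^{-2}\,{\sj ijklmn}_+(x_1\otimes x_2\otimes x_3\otimes x_4)$, not $\qd(n)^{-1}$ times it, and then $G'(\Omega_{\Gamma,h})=\qd(n)^{-1}{\sj ijklmn}_+(\cdots)$, which is exactly \eqref{E:sjtopsym}. The paper sidesteps the double factor by evaluating the residual theta with a cut at the $m$-edge, where $\qd(m)$ and $\qd(m)^{-1}$ cancel (this is precisely where $m\in I_0$ enters), so that a single $\qd(n)^{-1}$ survives. Your plan is correct once this scalar is recounted, but as stated the central coefficient is wrong.
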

\begin{proof}
  Pick any $ x_1 \in H^{ij}_k$, $x_2\in H_m^{kl}$.  By Lemma \ref{BH},
  $$
  (x_1\otimes \Id_{V_l})x_2=\sum_{n'\in I} \sum_{r\in R_{n'} }
  (\Id_{V_i}\otimes y_1^{n',r})y_2^{n',r} ,
  $$
  where $y_1^{n',r}\in H^{jl}_{n'}$, $y_2^{n',r}\in H^{in'}_m$, and $R_{n'}$
  is a finite set of indices.  By definition,
  $$
  \sj ijklmn (x_1 \otimes x_2)=\sum_{r\in R_{n} } y_1^{n,r}\otimes y_2^{n,r}.
  $$
  Under our assumptions on $i,j,k,l,m,n$, the pairings $(\, ,)^{jl}_n$ and
  $(\, ,)^{in}_m$ introduced in the proof of Lemma \ref{L:nondegeneracy} are
  non-degenerate, and we use them to identify $H^{jl}_n=(H^n_{jl})^\star $ and
  $H^{in}_m=(H^m_{in})^\star $.  Consider the homomorphism
\begin{equation}
  \label{E:iso367}
  {\sj ijklmn}_+ : H_k^{ij} \otimes_K H_m^{kl} \otimes_K H_{jl}^n \otimes_K
  H^m_{in} \rightarrow K
\end{equation}
adjoint to $\sj ijklmn$. The computations above show that
\begin{equation}
  \label{E:leftsj} {\sj ijklmn}_+ (x_1\otimes x_2 \otimes x_3 \otimes x_4)=
  \sum_{r\in R_n}\,\, (y_1^{n,r}, x_3)^{jl}_n \,\, (y_2^{n,r},x_4)^{in}_m
\end{equation}
for any $x_3\in H_{jl}^n$ and $x_4\in H_{in}^m$.

Consider the $\cat$-colored ribbon graphs $\Gamma_1$, $\Gamma^{n',r}_2$,
$\Gamma^{n,r}_3$ in Figure \ref{F:gammas} (it is understood that an edge with
label $s\in I$ is colored with $V_s$).  It is clear that
\begin{equation}
\label{E:rightsj}
G'\left(\Gamma_1\right)=   \sum_{n'\in I, r\in R_{n'}}
G'\left(\Gamma^{n',r}_2\right)  =   \sum_{n'\in I, r\in R_{n'}} \delta_{n,n'}
\qd(n)^{-1} (y_1^{n,r},x_3)^{jl}_n \,
G'\left(\Gamma^{n,r}_3\right)
\end{equation}
where the second equality follows from the fact that $xy=\qd(n)^{-1}
(y,x)\Id_{V_n}$ for $x\in H^n_{jl}$ and $y \in H^{jl}_n $.  Similarly,
$G'\left(\Gamma^{n,r}_3\right)= (y_2^{n,r},x_4)^{in}_m$ (here we use the
inclusion $m\in I_0$). Therefore
$$
\qd(n)\, G'\left(\Gamma_1\right) = {\sj ijklmn}_+ (x_1\otimes x_2 \otimes x_3
\otimes x_4).
$$
Rewriting this equality in terms of the symmetrized multiplicity modules, we
obtain the claim of the lemma.
\end{proof}
\begin{figure}[h, b] \centering 
  \subfloat[$\Gamma_1$]{\label{F:gamma1} \hspace{10pt} $\epsh{fig04}{24ex}$
    \put(-68,-40){$k$}\put(-41,32){$n$}\put(-80,16){$i$}
    \put(-62,-4){$j$}\put(-36,-19){$l$}\put(-14,3){$m$}
    \put(-61,41){$x_4$}\put(-47,11){$x_3$}
    \put(-67,-19){$x_1$}\put(-51,-45){$x_2$}
    \hspace{10pt} } \subfloat[$\Gamma^{n',r}_2$]{\label{F:gamma2}
    \hspace{10pt} $\epsh{fig05}{24ex}$
    \put(-41,-36){$n'$}\put(-41,32){$n$}\put(-81,0){$i$}
    \put(-62,-9){$j$}\put(-32,-9){$l$}\put(-17,0){$m$}
    \put(-61,41){$x_4$}\put(-47,11){$x_3$} \put(-50,-22){{\scriptsize
        $y_1^{n',r}$}}\put(-66,-47){{\scriptsize $y_2^{n',r}$}}
    \hspace{10pt} } \subfloat[$\Gamma^{n,r}_3$]{\label{F:gamma3}
    \hspace{10pt} $\epsh{fig06}{15ex}$
    \put(-53,1){$i$}\put(-33,1){$n$}\put(-15,1){$m$}
    \put(-51,17){$x_4$}\put(-52,-18){{\scriptsize $y_2^{n,r}$}}
    \hspace{10pt}
  }
  \caption{}
  \label{F:gammas}
\end{figure}

We say that a 6-tuple $(i,j,k,l,m,n)\in I^6$ is \emph{strongly good} if
$m,n\in I_0$ and the pairs $(i,j)$, $ (j,l)$, $(i,n)$, and $(k,l)$ are good. A
strongly good 6-tuple $(i,j,k,l,m,n)$ is good in the sense of Section
\ref{S:Modified6j}, so that both associated $6j$-symbols $\sjs ijklmn$ and
$\sjtop ijklmn$ lie in the $K$-module (\ref{momo}). Lemma \ref{L:two6j} yields
equality \eqref{E:sjtopsym} understood as an equality in this $K$-module.

Unfortunately, the notion of a strongly good 6-tuple is not invariant under
symmetries of an oriented tetrahedron. To make it symmetric, one has to add
more conditions on the labels. We say that a tuple $(i,j,k,l,m,n)\in I^6$ is
\emph{admissible} if all the indices $i,j,k,l,m,n$ belong to $I_0$ and the
pairs $(i,j)$, $ (j,l)$, $(i,n)$, $ (k,l)$, $(j,k^*)$, $(k^*,i)$, $(l,m^*)$,
$(m^*,k)$, $(n,l^*)$, $(j^*,n)$, $(m,n^*)$, $(i^*,m)$ are good. Admissible
6-tuples are strongly good, and the notion of an admissible 6-tuple is
preserved under the symmetries of an oriented tetrahedron.

\section{Properties of the modified $6j$-symbols}\label{S:Prop+}

Given a good triple $(i,j,k)\in I^3$ and a tensor product of several
$K$-modules such that among the factors there is a matched pair $H(i,j,k)$, $
H(k^*, j^*, i^*)$, we may contract any element of this tensor product using
the pairing \eqref{E:pairing1-}. This operation is called the contraction
along $H(i,j,k)$ and denoted by $*_{ijk}$.  For example, an element $x\otimes
y \otimes z \in H(i,j,k)\otimes_K H(k^*, j^*, i^*) \otimes_K H$, where $H$ is
a $K$-module, contracts into $(x,y)z\in H$.

\begin{theorem}[The Biedenharn-Elliott identity]
  \label{T:BEId} Let $j_0,j_1,{\ldots},j_8 $ be elements of $ I $ such that
  the tuples $(j_1, j_2, j_5, j_8, j_0, j_7)$ and $(j_5, j_3, j_6, j_4, j_0,
  j_8)$ are strongly good, $ j_7, j_8 \in I_0$, and the pair $(j_2,j_3) $ is
  good. Set $$J=\{ j \in I\, \vert\, H_j^{j_2 j_3}\neq 0 \} \subset I_0.$$ If
  the pairs $(j_1, j)$ and $(j,j_4)$ are good for all $j\in J$, then all
  6-tuples defining the $6j$-symbols in the following formula are strongly
  good and
  $$
  \sum_{j\in J} \qd(j) \, *_{j_2j_3j^*} *_{jj_4j_7^*}*_{j_1j j_6^*}
  \left(\sjtop {j_1}{j_2}{j_5}{j_3}{j_6}{j}\otimes \sjtop
    {j_1}{j}{j_6}{j_4}{j_0}{j_7}\otimes \sjtop {j_2}{j_3}{j}{j_4}{j_7}{j_8}
  \right)
  $$
  \begin{equation}\label{E:BEId}
    = *_{j_5j_8j_0^*}\left(\sjtop {j_5}{j_3}{j_6}{j_4}{j_0}{j_8}\otimes
      \sjtop {j_1}{j_2}{j_5}{j_8}{j_0}{j_7}\right)\, .
  \end{equation}
  Here both sides lie in the tensor product of six $K$-modules
  $$
  H(j_6,j_3^*,j_5^*), H(j_5,j_2^*,j_1^*), H(j_0,j_4^*,j_6^*),
  H(j_1,j_7,j_0^*), H(j_2,j_8,j_7^*), H(j_3,j_4,j_8^*).
  $$
\end{theorem}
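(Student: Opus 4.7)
The plan is to interpret both sides of the identity as invariants $\mathbb{G}'$ of compound $\cat$-colored ribbon graphs in $S^2$ and to match them via a local resolution-of-identity move. First I verify that under the stated hypotheses all five 6-tuples appearing in the formula are strongly good; the two explicit strong-goodness hypotheses supply most of the required good pairs, and the additional assumptions on $(j_2,j_3)$ and on $(j_1,j),(j,j_4)$ for $j \in J$ provide the rest. For $j \in J$, the index $j$ lies automatically in $I_0$ with $\qd(j)$ invertible, since $V_j$ appears as a summand in the good-pair decomposition of $V_{j_2} \otimes V_{j_3}$; an analogous argument places $j_6 \in I_0$ using the good pair $(j_5,j_3)$.

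Next I establish a gluing formula for $\mathbb{G}'$ on trivalent graphs: given two connected trivalent graphs $\Gamma_1,\Gamma_2 \subset S^2$ with distinguished vertices whose vertex modules are mutually dual in the sense of the pairing $(\,,\,)_{ijk}$, the contraction satisfies
\[
*_{ijk}\bigl(\mathbb{G}'(\Gamma_1) \otimes \mathbb{G}'(\Gamma_2)\bigr) = \mathbb{G}'(\Gamma_1 \# \Gamma_2),
\]
where $\Gamma_1 \# \Gamma_2$ denotes the trivalent graph obtained by excising the two distinguished vertices and splicing their incident arrows pairwise, matching labels and orientations. This follows from expanding the contraction in $(\,,\,)_{ijk}$-dual bases and using the definition of the pairing as $\mathbb{G}'$ of a theta graph: the theta subgraph, when inserted and then collapsed through the pairing, acts as the identity on the corresponding bundle of three parallel arrows. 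Iterating this gluing formula, the RHS of the BE identity equals $\mathbb{G}'(\Psi)$, where $\Psi$ is the graph obtained by gluing the two RHS tetrahedra along their common triple $(j_5,j_8,j_0^*)$, while the triple contraction on the LHS equals $\sum_{j \in J}\qd(j)\,\mathbb{G}'(\Psi_j)$, where $\Psi_j$ is obtained by gluing the three LHS tetrahedra along their three shared triples.

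The graphs $\Psi$ and $\Psi_j$ agree outside a small disk $D \subset S^2$ and differ inside $D$ by a local move: $\Psi$ carries a fragment where two arrows colored $V_{j_2}$ and $V_{j_3}$ run in parallel, while $\Psi_j$ replaces this fragment by two trivalent coupons joined by an interior edge colored $V_j$. The completeness identity
\[
\Id_{V_{j_2} \otimes V_{j_3}} = \sum_{j \in J} \qd(j) \sum_{r \in R_j} \beta_{j,r}\, \beta^{j,r},
\]
with $\{\beta_{j,r}\}_r \subset H^{j_2 j_3}_j$ and $\{\beta^{j,r}\}_r \subset H^j_{j_2 j_3}$ dual bases under the pairing $(\,,\,)^{j_2 j_3}_j$ from the proof of Lemma \ref{L:nondegeneracy}, can be inserted into $\Psi$ along the parallel arrows without changing $\mathbb{G}'$, yielding $\mathbb{G}'(\Psi) = \sum_{j \in J}\qd(j)\,\mathbb{G}'(\Psi_j)$, which is the claimed identity. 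The main obstacle is the $\qd$-normalization bookkeeping: the pairing $(\,,\,)^{ij}_k$ differs from the natural completeness $\alpha^{k,r}\alpha_{k,s} = \delta_{rs}\Id_{V_k}$ by a factor of $\qd(k)$, and this exact mismatch generates the weight $\qd(j)$ in the modified BE identity — a feature that would be absorbed into the global dimension in the semisimple standard setting. A secondary difficulty is ensuring that all intermediate graphs obtained by gluing remain in the class $\overline{\tet}_{I_0}$ on which $G'$ is well-defined, which is where the membership $j_7,j_8 \in I_0$ and the good-pair hypotheses are used repeatedly.
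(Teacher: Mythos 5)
Your strategy --- a direct skein-theoretic argument via gluing of tetrahedral graphs followed by a resolution of the identity on $V_{j_2}\otimes V_{j_3}$ --- is genuinely different from the paper's proof, which never glues graphs: the paper deduces \eqref{E:BEId} from the standard Biedenharn--Elliott identity \eqref{E:usualBE} (a consequence of the Pentagon Axiom), converts each standard $6j$-symbol into a modified one via Lemma \ref{L:two6j} (picking up $\qd(j_7)\qd(j_8)\qd(j)$ on the left and $\qd(j_7)\qd(j_8)$ on the right), and cancels the invertible factors $\qd(j_7),\qd(j_8)$. Your completeness identity $\Id_{V_{j_2}\otimes V_{j_3}}=\sum_{j}\qd(j)\sum_r\beta_{j,r}\beta^{j,r}$ is indeed the correct source of the weight $\qd(j)$, and a graphical proof along your lines can be made to work.

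There is, however, a concrete gap in the middle of your argument. The contraction $*_{j_2j_3j^*}$ cannot be interpreted as a graph gluing. After gluing along $*_{j_1j j_6^*}$ and $*_{jj_4j_7^*}$ one obtains a connected $8$-vertex trivalent graph in which the two vertices carrying $H(j,j_3^*,j_2^*)$ and $H(j_2,j_3,j^*)$ are \emph{already joined} by the unique edge colored $V_j$ (each of the three tetrahedra contains exactly one $V_j$-edge, and all six of its endpoints lie on glued faces). Performing the third gluing would identify the two endpoints of that edge and close it into a vertex-free circle, so your fully glued $\Psi_j$ is not in $\overline\tet_{I_0}$ and $\mathbb{G}'(\Psi_j)$ is undefined; a naive evaluation of the free loop would moreover introduce a spurious dimension factor. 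Note that your own local picture of $\Psi_j$ (two coupons joined by an interior $V_j$-edge) describes the $8$-vertex two-fold gluing, not the $6$-vertex three-fold one, so the claim ``the triple contraction equals $\sum_j\qd(j)\,\mathbb{G}'(\Psi_j)$ with $\Psi_j$ the three-fold gluing'' is inconsistent with the rest of your argument. The repair is to stop after the two honest gluings and treat $\sum_j\qd(j)\,*_{j_2j_3j^*}(\cdot)$ in one stroke as the insertion of $\Id_{V_{j_2}\otimes V_{j_3}}$ via your completeness identity, checking that the resulting $6$-vertex colored graph is isotopic in $S^2$ to the gluing of the two right-hand tetrahedra. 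A secondary point: your justification of the gluing formula is also not right as stated, since $\sum_a x_a\circ \tilde y_a$ over dual bases is an endomorphism of $V_i\otimes V_j\otimes V_k$ factoring through copies of $\unit$, not the identity; the formula $*_{ijk}\bigl(\mathbb{G}'(\Gamma_1)\otimes\mathbb{G}'(\Gamma_2)\bigr)=\mathbb{G}'(\Gamma_1\#\Gamma_2)$ is nevertheless true, but one proves it by evaluating each half to a single coupon and comparing both sides to the theta pairing after cutting at a splice edge --- which requires one splice color to lie in $I_0$ and the pairing to be non-degenerate, both supplied by the goodness hypotheses.
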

\begin{proof} The claim concerning the strong goodness follows directly from
  the definitions. We verify \eqref{E:BEId}. Recall that if $(i,j)$ is a good
  pair, and $V_k$ is a summand of $V_i\otimes V_j$, then $\qd (k)$ is
  invertible. Therefore if $\qd(j_7)$ is not invertible, then $V_{j_7}$ can
  not be a summand of $V_{j_2}\otimes V_{j_8}$, and so both sides of
  \eqref{E:BEId} are equal to 0.  Similarly if $\qd(j_8)$ is not invertible,
  then $V_{j_8}$ can not be a summand of $V_{j_3}\otimes V_{j_4}$, and so both
  sides of \eqref{E:BEId} are equal to 0. We assume from now on that
  $\qd(j_7)$ and $\qd(j_8)$ are invertible in $K$.

  The Pentagon Axiom for the tensor multiplication in $\cat$ implies that
  \begin{align}
    \label{E:usualBE}
    \sum_{j\in J}&\left(I_{j_0}^{j_1 j_7} \otimes \sj
      {j_2}{j_3}{j}{j_4}{j_7}{j_8}\right)\left( \sj
      {j_1}{j}{j_6}{j_4}{j_0}{j_7}\otimes I_{j_2 j_3}^{j}\right)\left(I_{j_6
        j_4}^{j_0}  \otimes \sj {j_1}{j_2}{j_5}{j_3}{j_6}{j} \right)\notag\\
    &= \left(\sj {j_1}{j_2}{j_5}{j_8}{j_0}{j_7} \otimes I_{j_8}^{j_3
        j_4}\right)P_{23}\left(\sj {j_5}{j_3}{j_6}{j_4}{j_0}{j_8}\otimes
      I_{j_5}^{j_1 j_2}\right)
  \end{align}
  where $I_i^{jk}$ is the identity automorphism of $H_i^{jk}$, $I^i_{jk}$ is
  the identity automorphism of $H^i_{jk}$, and $P_{23}$ is the permutation of
  the second and third factors in the tensor product (cf.\ Theorem VI.1.5.1 of
  \cite{Tu}).  Both sides of \eqref{E:usualBE} are homomorphisms
  $$
  H^{j_6j_4}_{j_0}\otimes H^{j_5j_3}_{j_6}\otimes H^{j_1j_2}_{j_5}\rightarrow
  H^{j_1j_7}_{j_0}\otimes H^{j_2j_8}_{j_7}\otimes H^{j_3j_4}_{j_8}\, .
  $$

  We can rewrite all $6j$-symbols in \eqref{E:usualBE} in the form \eqref{hl}.
  Note that the homomorphism \eqref{hl} carries any $x\in H(i,j,k^*) \otimes_K
  H(k,l,m^*)$ to
  $$
  *_{ijk^*} *_{klm^*}\left(\sjs ijklmn \otimes x \right)= \qd(n) *_{ijk^*}
  *_{klm^*}\left(\sjtop ijklmn \otimes x \right)\, ,
  $$
  where we suppose that the tuple $(i,j,k,l,m,n)$ is strongly good and
  consider both $6j$-symbols as vectors in the module \eqref{momo}.

  Denote by $C_j$ the tensor product of the three $6j$-symbols in the $j$-th
  term of the sum on the left hand side of \eqref{E:BEId}.  Denote by $D$ the
  tensor products of the two $6j$-symbols on the right hand side of
  \eqref{E:BEId}.  We can rewrite \eqref{E:usualBE} as
  \begin{align*}\sum_{j\in J}\qd({j_8})\qd({j_7}) \qd(j)
    *_{j_2j_3j^*} *_{jj_4j_7^*}*_{j_1j j_6^*} *_{j_6j_4j_0^*}
    *_{j_1j_2j_5^*}*_{j_5j_3j_6^*} (C_j\otimes x)\\
    =\qd({j_8})\qd({j_7}) *_{j_1j_2j_5^*} *_{j_5j_8j_0^*} *_{j_5j_3j_6^*}
    *_{j_6j_4j_0^*} (D\otimes x)
  \end{align*}
  for all $x\in H({j_6,j_4,j_0^*})\otimes H({j_5,j_3,j_6^*})\otimes
  H({j_1,j_2,j_5^*})$.  Since $\qd({j_7})$ and $\qd({j_8})$ are invertible
  elements of $K$, the previous equality implies that
  \begin{equation*}
    \label{E:sBEI}
    \sum_{j\in J} \qd(j)*_{j_2j_3j^*} *_{jj_4j_7^*}*_{j_1j j_6^*}(C_j)=
    *_{j_5j_8j_0^*}(D)\, .
 \end{equation*}
 This proves the theorem.
\end{proof}

\begin{theorem}[The orthonormality relation]
  \label{T:orth}
  Let $i,j,k,l,m,p$ be elements of $I$ such that $k, m \in I_0$ and the pairs
  $(i,j)$, $(j,l)$, $(p,l)$, $(k,l)$ are good.  Set
  $$
  N= \{n\in I\, |\, H^{jl}_n\neq 0\,\, {\text {and}} \,\, H^{in}_m\neq 0\}
  \subset I_0 \, .
  $$
  If the pair $(i,n)$ is good for all $n\in N$, then both 6-tuples defining
  the $6j$-symbols in the following formula are good (in fact, the first one
  is strongly good) and
  $$
  \qd(k)\sum_{n\in N}\qd(n) *_{inm^*}*_{jln^*}\left(\sjtop ij{p}lmn
    \otimes \sjtop k{j^*}inml \right)
  $$
  $$
  =\delta_{k,p}\Id(i,j,k^*) \otimes \Id(k,l,m^*),
  $$
  where $\delta_{k,p}$ is the Kronecker symbol and $\Id(a,b,c)$ is the
  canonical element of $H(a,b,c)\otimes_K H(c^*,b^*,a^*)$ determined by the
  duality pairing.
\end{theorem}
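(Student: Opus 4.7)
The plan is to reduce the stated identity to the tautological fact that the change-of-basis isomorphism \eqref{E:iso4} composed with its inverse is the identity, much in the spirit of the deduction of the Biedenharn--Elliott identity (Theorem \ref{T:BEId}) from the Pentagon axiom. The external factor $\qd(k)$ and the sum $\sum_{n\in N}\qd(n)$ are precisely the weights that, after applying Lemma \ref{L:two6j} and a close analog of it, convert the two modified $6j$-symbols into the adjoints of matrix blocks of \eqref{E:iso4} and of its inverse, respectively.

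I would first verify that under the stated hypotheses the 6-tuple $(i,j,p,l,m,n)$ is strongly good for every $n\in N$; Lemma \ref{L:two6j} then gives
$$\sjs ij{p}lmn=\qd(n)\,\sjtop ij{p}lmn,$$
identifying $\sjtop ij{p}lmn$ (up to $\qd(n)^{-1}$) with the adjoint of the standard $6j$-symbol $\sj ij{p}lmn$, i.e.\ with the $(p,n)$-matrix block of \eqref{E:iso4}. Next I would handle $\sjtop k{j^*}inml$. Its 6-tuple is only good but not strongly good, so Lemma \ref{L:two6j} does not apply verbatim; I would re-derive a variant of the lemma in which the two decompositions of $\Hom(V_m,V_i\otimes V_j\otimes V_l)$ play reversed roles. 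Concretely, cut the thickened ribbon graph $\Omega_{\Gamma(k,j^*,i,n,m,l)}$ at its $V_k$-colored edge (permissible since $k\in I_0$), so that $\qd(k)\cdot\sjtop k{j^*}inml$ is the $G'$-value of a closed bubble; then expand the bubble via the $(i,n)$-decomposition of Lemma \ref{BH}. The outcome is that $\qd(k)\cdot\sjtop k{j^*}inml$ equals the adjoint of the $(n,k)$-matrix block of the inverse of \eqref{E:iso4}. The external $\qd(k)$ in the statement is exactly the normalization produced by this cutting.

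With both translations in hand, the left-hand side of the theorem becomes the $(k,p)$-matrix block of $\eqref{E:iso4}^{-1}\circ\eqref{E:iso4}$, where the contractions $*_{jln^*}$ and $*_{inm^*}$ implement the matrix multiplication via the non-degenerate pairings $(\,,\,)^{jl}_n$ and $(\,,\,)^{in}_m$ from the proof of Lemma \ref{L:nondegeneracy}, and the $\qd(n)$ under the summation supplies the factor needed to align $(\,,\,)^{jl}_n$ with the symmetrized pairing $(\,,\,)_{jln^*}$ (and similarly for $(\,,\,)^{in}_m$ versus $(\,,\,)_{inm^*}$). This composition equals $\delta_{k,p}\Id_{H^{ij}_k\otimes H^{kl}_m}$, and transporting the resulting identity through the isomorphisms $a^{ij}_k$ and $a^{kl}_m$ produces the canonical element $\delta_{k,p}\Id(i,j,k^*)\otimes\Id(k,l,m^*)$ asserted by the theorem.

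The hard part is the second step. Since Lemma \ref{L:two6j} does not apply directly to $(k,j^*,i,n,m,l)$, the adaptation demands re-running the proof of that lemma with the cut placed at the $V_k$-colored edge, which is precisely why the factor $\qd(k)$ must appear externally in the statement. An alternative would be to invoke tetrahedral symmetry to relabel $\sjtop k{j^*}inml$ until its 6-tuple becomes strongly good, but a short case analysis of the six rotational (and reversed) forms shows that each requires at least one additional goodness hypothesis not present in the theorem's assumptions, so the direct graphical reworking of Lemma \ref{L:two6j} appears unavoidable. A secondary delicate matter is the bookkeeping of the three $\qd$-factors—the external $\qd(k)$, the $\qd(n)$ in the summand, and the $\qd(n)^{-1}$ implicit in comparing $(\,,\,)^{jl}_n$ with $(\,,\,)_{jln^*}$—which must balance so that no spurious normalizations remain on the right-hand side.
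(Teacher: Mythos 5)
Your proposal is correct and follows essentially the same route as the paper's proof: reduce to the block-matrix identity $\sum_{n}\sj ijklmn_{\text{inv}}\circ\sj ij{p}lmn=\delta_p^k$, translate the first factor via Lemma \ref{L:two6j}, and identify $\qd(k)\,\sjtop k{j^*}inml$ with the adjoint of the inverse block by reworking that lemma graphically (the paper's equation \eqref{E:2sj2}, proved with the graphs $\Gamma_4$ and $\Gamma_5$). Your accounting of the factors $\qd(k)$ and $\qd(n)$, and your observation that tetrahedral symmetry cannot replace the direct reworking under the stated hypotheses, both match the paper.
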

\begin{proof} Consider any $i,j, l,m \in I $ such that the pairs $(i,j)$ and
  $(j,l)$ are good and consider the associated isomorphism \eqref{E:iso4}.
  Restricting the inverse isomorphism to the summand in the source
  corresponding to $n\in I$ and projecting into the summand in the target
  corresponding to $k\in I$ we obtain a homomorphism
\begin{equation}
\label{E:sjinv}
\sj ijklmn_{\text{inv}}:   H^{jl}_n \otimes_K H_m^{in}\rightarrow H_k^{ij}
\otimes_K H_m^{kl}.
\end{equation}
These homomorphisms corresponding to fixed $i,j,l,m$ and various $k,n\in I$
form a block-matrix of the isomorphism inverse to \eqref{E:iso4}.  Therefore,
\begin{equation}
\label{E:sjcompinv}
\sum_{n\in I}\, \sj ijklmn_{\text{inv}}\circ \sj ij{p}lmn =\delta_{p}^k( i,j,l,m)
\end{equation}
where $\delta_{p}^k( i,j,l,m) $ is zero if $k\neq p$ and is the identity
automorphism of $H_k^{ij} \otimes_K H_m^{kl}$ if $k=p$.

Switching to the symmetrized multiplicity modules as in Section \ref{S:Prop},
we can rewrite~\eqref{E:sjinv} as a homomorphism
\begin{equation}
\label{E:sjtopinv}
  H(j,l,n^*) \otimes_K
H(i,n,m^*) \rightarrow H(i,j,k^*) \otimes_K H(k,l,m^*).
\end{equation}
Since $(i,j)$ is good, $H(i,j,k^*)^\star=H(k,j^*, i^*)$. Assuming that $(k,l)$
is good, we can write $H(k,l,m^*)^\star=H(m,l^*, k^*)$. Consider the
homomorphism
$$
\sjs ijklmn_{\text{inv}}: H(j,l,n^*) \otimes_K
H(i,n,m^*) \otimes_K H(k,j^*, i^*) \otimes_K H(m,l^*, k^*) \to K
$$
adjoint to \eqref{E:sjtopinv}. This homomorphism has the same source
module as the $6j$-symbol $\sjtop k{j^*}inml$, where we suppose that
$k\in I_0 $.

\begin{figure}[b]
  \centering
  \subfloat[$\Gamma_4$]{\label{F:gamma4} \hspace{10pt}
    $\epsh{fig01}{120pt}\pic{.8pt}{\put(-82,-22){$i$}\put(-54,-2){$j$}
      \put(-31,20){$l$}\put(-79,35){$k$}\put(-51,-44){$n$}\put(-17,2){$m$}}$
    \hspace{10pt} }
  \subfloat[$\Gamma_5$]{\label{F:gamma5} \hspace{10pt}
    $\epsh{fig02}{120pt}\pic{.8pt}{\put(-83,-49){$i$}\put(-51,26){$l$}
      \put(-98,10){$k$}
      \put(-71,-5){$j^*$}\put(-46,-39){$n$}\put(-18,-7){$m$}}$
    \hspace{10pt}
  }
  \caption{}
  \label{F:gammas2}
\end{figure}
The argument similar to that in Lemma \ref{L:two6j} (using the
graphs $ \Gamma_4 $ and $\Gamma_5$ in Figure \ref{F:gammas2}) shows
that if the pairs $(i,j), (j,l), (k,l)$ are good and $k, m\in I_0$,
then
\begin{equation}
\label{E:2sj2}
\sjs ijklmn_{\text{inv}}= \qd(k) \sjtop k{j^*}inml  \, .
\end{equation}

Assuming additionally that the pair $(i, n)$ is good, we can view both
$6j$-symbols in \eqref{E:2sj2} as vectors in
$$
H(n,l^*,j^*) \otimes_K
H(m,n^*,i^*) \otimes_K H(i,j , k^*) \otimes_K H(k,l , m^*)\, .
$$
Then the homomorphism \eqref{E:sjtopinv} carries any $y\in H(j,l,n^*) \otimes_K
H(i,n,m^*)$ to
$$
*_{jln^*}*_{inm^*} \left(\sjs ijklmn_{\text{inv}} \otimes y \right)=
\qd(k) *_{jln^*}*_{inm^*}\left(\sjtop k{j^*}inml \otimes y\right)  \, .
$$
Now we can rewrite \eqref{E:sjcompinv} as
$$
\sum_{n\in N}\qd(n)\qd(k)
*_{jln^*}*_{inm^*}*_{ijk^*}*_{klm^*}\left(\sjtop ij{p}lmn \otimes \sjtop
  k{j^*}inml \otimes x\right) =\delta_{k,p} \, x
$$
for all $x\in H(i,j,k^*)\otimes_K H(k,l,m^*)$.  This implies the
claim of the theorem.
\end{proof}


\section{Ambidextrous objects and standard $6j$-symbols}

To apply the results of the previous sections, we must construct a
pivotal Ab-category $\cat$ with basic data $\{V_i, w_i \}_{i \in I}$
and a t-ambi pair $(I_0,\qd)$.  In this and the next sections, we
give   examples of such objects using the technique of ambidextrous
objects introduced in \cite{GPT}. We first briefly  recall this
technique.

\subsection{Ambidextrous objects}\label{S:RibbonGraphInv} Let $\cat$
be a (strict) ribbon Ab-category, i.e., a (strict) pivotal tensor
Ab-category with braiding and twist.  We denote the braiding
morphisms in $\cat$ by $c_{V,W}:V\otimes W \rightarrow W \otimes V$
and the duality morphisms $b_V, d_V, b'_V, d'_V$ as in
Section~\ref{S:SphCat}.  We   assume   that the ground ring $K$ of
$\cat$ is a field.

For an  object $J$ of $\cat$ and an  endomorphism $f$ of $J\otimes
J$, set
$$
\tr_{L}(f)=(d_{J}\otimes \Id_{J})\circ(\Id_{J^{*}}\otimes
f)\circ(b'_{J}\otimes \Id_{J}) \in \End(J),
$$
$$
\tr_{R}(f)=(\Id_{J}\otimes d'_{J}) \circ (f \otimes \Id_{J^{*}})
\circ(\Id_{J}\otimes b_{J}) \in \End(J).
$$
An object $J$ of $\cat$ is   \emph{ambidextrous} if
$\tr_{L}(f)=\tr_R(f)$ for all $f \in\End(J \otimes J).$

Let $Rib_\cat$ be the category of $\cat$-colored ribbon graphs and
let $F:Rib_\cat\to \cat $ be the usual ribbon functor   (see
\cite{Tu}).  Let $T_{V}$ be a $\cat$-colored (1,1)-ribbon graph
whose open string is oriented downward and colored with a simple
object $V$ of $\cat$. Then $F(T_{V})\in \End_{\cat}(V)= K \Id_V$.
Let $<T_{V}> \, \in K$ be such that $F(T_{V})= \, <T_{V}> \, \Id_V$.
For any   objects $V, V'$ of $\cat$ such that $V'$ is simple, set
$$
S'(V,V')=\left< \epsh{fig0}{10ex}\right> \, \in K\, .
$$

Fix basic data $\{V_i, w_i \}_{i \in I}$ in $\cat$ and  a simple
ambidextrous object $J$ of $\cat$. Set
\begin{equation}\label{E:DefdJ--}
 I_0={I_0}(J)=\{ i\in I :   S'(J,V_i)
\neq 0\}\, .
\end{equation}
  For $i\in I_0$, set
\begin{equation}\label{E:DefdJ}
\qd_J(i)= \frac{S'(V_i,J)}{S'(J,V_i)}\in \FK\, .
\end{equation}
We view $\qd_J(i)$ as the  modified quantum dimension of $V_i$
determined by $J$.

\begin{theorem}[\cite{GPT}]\label{eee} Let $L$ be a
$\cat$-colored ribbon $(0,0)$-graph  having  an edge   $e$
  colored with   $V_i$ where   $i\in {  I}_0$.  Cutting $L$ at $e$, we obtain a
  colored ribbon (1,1)-graph $T^e$ whose closure is $L$.  Then the
  product
$  {   \qd}_J(i) <T^{e}> \, \in \FK
 $
  is independent of the choice of $e$ and yields an isotopy
  invariant of $L$.
\end{theorem}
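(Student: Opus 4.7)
The plan is to reduce the statement to a local sliding move and then apply the ambidextrous hypothesis on $J$. First, because $V_i$ is simple, the scalar $\langle T^e\rangle$ depends only on the isotopy class of the $(1,1)$-graph $T^e$, and different cut points on the same edge $e$ of $L$ yield isotopic $(1,1)$-graphs. Thus $\qd_J(i)\langle T^e\rangle$ depends only on the edge $e$, not on the particular cut point. Moreover, any two edges $e, e'$ of $L$ whose colors lie in $\{V_i\}_{i\in I_0}$ can be joined by a sequence of edges in which consecutive ones meet at a common coupon. It therefore suffices to prove the equality $\qd_J(i)\langle T^e\rangle = \qd_J(i')\langle T^{e'}\rangle$ in the case where $e$ and $e'$ are incident to a single common coupon.

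The heart of the argument is to express $\qd_J(i)\langle T^e\rangle$ as the value $\langle \tilde T\rangle$ of an auxiliary $(1,1)$-ribbon graph $\tilde T$ whose open strand is colored by the simple object $J$. Concretely, one attaches to $L$, in a small neighborhood of the edge $e$, a $J$-colored loop Hopf-linked once with $e$, and then cuts the resulting graph open along the new $J$-loop. Computing the effect of this surgery through the Reshetikhin--Turaev functor $F$, using the Hopf link identities $F(\text{strand of }V_i\text{ encircled by a }J\text{-loop}) = S'(J, V_i)\Id_{V_i}$ and $F(\text{strand of }J\text{ encircled by a }V_i\text{-loop}) = S'(V_i, J)\Id_J$, and the definition $\qd_J(i) = S'(V_i, J)/S'(J, V_i)$, yields $\langle \tilde T\rangle = \qd_J(i)\langle T^e\rangle$.

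To conclude, one shows that $\langle \tilde T\rangle$ does not depend on whether the Hopf-linked $J$-loop is inserted near $e$ or near $e'$. Because $e$ and $e'$ share a coupon, the two candidate $(1,1)$-graphs $\tilde T_e, \tilde T_{e'}$ differ only in the position of the open $J$-strand relative to that common coupon; the ambidextrous condition $\tr_L(f) = \tr_R(f)$ for $f\in\End(J\otimes J)$ is precisely the algebraic statement that sliding the open $J$-strand from one side of the coupon to the other preserves $\langle \cdot\rangle$. Combining $\langle\tilde T_e\rangle = \langle\tilde T_{e'}\rangle$ with the previous step produces $\qd_J(i)\langle T^e\rangle = \qd_J(i')\langle T^{e'}\rangle$, proving the theorem.

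The main obstacle is the sliding step: one must carefully translate the abstract identity $\tr_L(f) = \tr_R(f)$ on $\End(J\otimes J)$ into the geometric invariance of $\langle \tilde T\rangle$ under moving the open $J$-strand across an arbitrary coupon of $L$. This requires a careful local analysis of the ribbon graph near the coupon, paying attention to orientations, the braiding and twist, and the duality conventions built into the Reshetikhin--Turaev functor, so that the algebraic equality $\tr_L = \tr_R$ matches exactly the geometric move that transports the cut from $e$ to $e'$.
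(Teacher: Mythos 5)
First, a remark on scope: this paper does not actually prove Theorem \ref{eee} — it is imported from \cite{GPT} — so your proposal has to be measured against the argument given there. Your general strategy (convert $\langle T^e\rangle$ into a bracket of a picture with an open $J$-strand and then invoke ambidextrousness) is in the right spirit, but two of your intermediate claims are wrong, and the second is actually false rather than merely unjustified. If $\tilde T_e$ is the graph $L$ with a $J$-colored meridian added around $e$ and then cut along that meridian, then replacing the $(1,1)$-tangle $T^e$ by $\langle T^e\rangle\,\Id_{V_i}$ inside the picture gives $\langle\tilde T_e\rangle=S'(V_i,J)\,\langle T^e\rangle$; the denominator $S'(J,V_i)$ never appears, so $\langle\tilde T_e\rangle\neq \qd_J(i)\langle T^e\rangle$ in general. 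Consequently your key claim $\langle\tilde T_e\rangle=\langle\tilde T_{e'}\rangle$ amounts to $S'(V_i,J)\langle T^e\rangle=S'(V_j,J)\langle T^{e'}\rangle$, which contradicts the theorem itself whenever $S'(J,V_i)\neq S'(J,V_j)$. So this identity cannot follow from ambidextrousness; the two errors happen to cancel in your final line, but the proof does not close. Moreover, $\tr_L(f)=\tr_R(f)$ is a condition on $\End(J\otimes J)$, i.e.\ on pictures containing \emph{two} $J$-strands; it is not the statement that a single open $J$-strand may be slid across an arbitrary coupon of $L$ (that would be a transparency property of $J$, which is not assumed).

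The argument of \cite{GPT} avoids both problems and requires no reduction to edges sharing a coupon — a reduction which in any case breaks down when $L$ is a link with several components, since then no chain of coupons joins $e$ to $e'$. Cut $L$ at both $e$ and $e'$ to get a $(2,2)$-tangle $h$ with $\tr_R(h)=\langle T^e\rangle\Id_{V_i}$ and $\tr_L(h)=\langle T^{e'}\rangle\Id_{V_j}$ (partial traces over the $V_j$- and $V_i$-strands respectively). Now thread an open $J$-strand through the closure arc of the $V_i$-strand and a second open $J$-strand through the closure arc of the $V_j$-strand; this produces a single element $g\in\End(J\otimes J)$. Closing the second $J$-strand turns it into a meridian of the $V_j$-edge, giving $\tr_R(g)=S'(J,V_j)\,S'(V_i,J)\,\langle T^e\rangle\,\Id_J$, and symmetrically $\tr_L(g)=S'(J,V_i)\,S'(V_j,J)\,\langle T^{e'}\rangle\,\Id_J$. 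One global application of $\tr_L(g)=\tr_R(g)$, followed by division by $S'(J,V_i)S'(J,V_j)\neq 0$ (this is exactly where $i,j\in I_0$ enters), yields $\qd_J(i)\langle T^e\rangle=\qd_J(j)\langle T^{e'}\rangle$. The two-strand construction of $g$ is the essential idea missing from your proposal.
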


\begin{corollary}\label{C:ribbonambi}
  The pair $(I_0=I_0(J),\qd_J)$ is a t-ambi pair in $\cat$.
\end{corollary}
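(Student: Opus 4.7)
The plan is to observe that Corollary \ref{C:ribbonambi} is essentially a direct translation of Theorem \ref{eee} into the language of t-ambi pairs introduced in Definition \ref{def1}, so almost no new work is required.

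First I would unpack the two definitions side by side. A t-ambi pair requires the equality $\qd(i)\langle T_{V_i}\rangle = \qd(j)\langle T_{V_j}\rangle$ for every connected trivalent $\cat$-colored ribbon graph $T \subset S^2$ whose edges are colored by objects in $\{V_i\}_{i\in I}$, with at least one edge colored by some $V_i$ with $i \in I_0$, and for any two cutting presentations at edges labelled by indices in $I_0$. Theorem \ref{eee}, on the other hand, asserts that for any $\cat$-colored ribbon $(0,0)$-graph $L$ with at least one edge colored by $V_i$, $i \in I_0(J)$, the scalar $\qd_J(i)\langle T^e\rangle$ is independent of the chosen edge $e$ whose color lies in $I_0(J)$.

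Next I would check that the hypotheses of Theorem \ref{eee} are satisfied by any $T \in \tet_{I_0(J)}$. Since $S^2 \subset \R^3$, a $\cat$-colored ribbon graph in $S^2$ can be viewed canonically as a $\cat$-colored ribbon $(0,0)$-graph in $\R^3$, and the Reshetikhin--Turaev functor $F : Rib_\cat \to \cat$ of Section~6 restricts on planar ribbon graphs to the functor $G : \Graph_\cat \to \cat$ used to define $\langle T_{V_i}\rangle$ in Section~3. In particular, cutting $T$ at a point of an edge colored by $V_i$ produces, in either framework, the same $(1,1)$-ribbon graph $T_{V_i} = T^e$, and the scalars $\langle T_{V_i}\rangle$ and $\langle T^e\rangle$ agree.

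Having identified the two setups, I would apply Theorem \ref{eee} directly: if $T \in \tet_{I_0(J)}$ and $T_{V_i}, T_{V_j}$ are two cutting presentations with $i,j \in I_0(J)$, then
\[
\qd_J(i)\langle T_{V_i}\rangle = \qd_J(j)\langle T_{V_j}\rangle,
\]
since both expressions compute the common isotopy invariant of $T$ provided by Theorem \ref{eee}. This is precisely condition \eqref{El} in Definition \ref{def1}, so $(I_0(J), \qd_J)$ is t-ambi. I do not anticipate any serious obstacle; the only point that requires care is confirming the compatibility between the ribbon graph formalism of Section~6 (graphs in $\R^3$) and the planar formalism of Section~3 (graphs in $S^2$), which is standard and follows from the fact that both invariants are defined by the same splitting of ribbon graphs into elementary pieces.
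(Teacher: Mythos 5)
Your proposal is correct and matches the paper's treatment: the paper states the corollary immediately after Theorem \ref{eee} with no written proof, treating it exactly as the direct translation you describe (a graph $T\in\tet_{I_0}$ in $S^2$ is a $\cat$-colored ribbon $(0,0)$-graph, its cutting presentations are the graphs $T^e$ of Theorem \ref{eee}, and the independence statement there is precisely condition \eqref{El}). Your remark on the compatibility of $F$ on planar ribbon graphs with $G$ is the only point needing care, and you resolve it correctly.
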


\subsection{Example: The standard quantum
$6j$-symbols}\label{SS:QSLA} Let $\g$ be a simple Lie algebra over
$\C$.  Let $q$ be a primitive complex root of unity of order $2r$, where $r$
is a positive integer.  Let $\Uqg$ be the Drinfeld-Jimbo $\C$-algebra
associated to $\g$.  This algebra is presented by the generators $E_k, F_k,
K_k, K_k^{-1}$, where $k=1,{\ldots},m$ and the usual relations.  Let $\Uqghat$
be the Hopf algebra obtained as the quotient of $\Uqg$ by the two-sided ideal
generated by $E_k^r, F_k^r, K_k^r-1$ with $k=1,{\ldots},m$.  It is known that
$\Uqghat$ is a finite dimensional ribbon Hopf algebra.  The category of
$\Uqghat$-modules of finite complex dimension is a ribbon Ab-category.  It
gives rise to a ribbon Ab-category $\cat$ obtained by annihilating all
negligible morphisms (see Section XI.4 of \cite{Tu}).

Let $I$ be the set of weights belonging to the Weyl alcove determined by $\g$
and $r$.  For $i\in I$, denote by $V_i$ the simple weight module with highest
weight $i$.  By \cite{TW}, there is an involution $I\to I, i \rightarrow i^*$
and morphisms $\{w_i:V_i \to V_{i^*}^*\}_{i \in I}$ satisfying Equation
\eqref{E:FamilyIso}.  Then $\cat$ is a modular category with basic data
$\{V_i\}_{i\in I}$.

Let $J=\C$ be the unit object of $\cat$.  By Lemma 1 of \cite{GPT}, the object
$J$ is ambidextrous and by Corollary \ref{C:ribbonambi} the pair
$(I_0={I_0}(J),\qd_J)$ is a t-ambi pair.  The general theory of \cite{GPT}
implies that ${ I}_0=I$ and ${ \qd}_J $ is the usual quantum dimension.

We can apply the techniques of Sections \ref{S:Modified6j} and \ref{S:Prop} to
the basic data $\{V_i, \, w_i \}_{i \in I}$, and the t-ambi pair $(
I={I_0}(J),\qd_J)$. It is easy to check that the corresponding modified
$6j$-symbols are the usual quantum $6j$-symbols associated to $\g$. Note that
here all pairs $(i,j)\in I^2$ are good.

\section{Example: Quantum $6j$-symbols from $\UsltH$ at roots of
  unity} \label{SS:UsltH}

In this section we consider a category of modules over the quantization
$\Ubar$ of $\slt$ introduced in \cite{GPT}.  The usual quantum dimensions and
the standard $6j$-symbols associated to this category are generically zero. We
equip this category with basic data and a t-ambi pair leading to non-trivial
$6j$-symbols.

Set $q=\e^{ki\pi/\ro}\in \C$, where $\ro$ is a positive integer and $k$ is an
odd integer coprime with $ \ro$.  We use the notation $q^x$ for $\e^{x
  ki\pi/\ro}$, where $x\in \C$ or $x$ is an endomorphism of a finite
dimensional vector space.

Let $\Uslt$ be the standard quantization of $\slt$, i.e. the $\C$-algebra with
generators $E, F, K, K^{-1}$ and the following defining relations:
\begin{equation}\label{E:RelUslt}
  KK^{-1} =K^{-1}K=1,  \,  KEK^{-1} =q^2E, \,  KFK^{-1}=q^{-2}F,\,
  [E,F] =\frac{K-K^{-1}}{q-q^{-1}}.
\end{equation}
This algebra   is a Hopf algebra with   coproduct $\Delta$, counit
$\varepsilon$, and antipode $S$   defined by the formulas
\begin{align*}
  \Delta(E)&= 1\otimes E + E\otimes K, & \Delta(F)&=K^{-1} \otimes F +
  F\otimes 1, & \Delta(K^{\pm 1})&=K^{\pm 1} \otimes K^{\pm 1},\\
  \varepsilon(E)&= \varepsilon(F)=0, &
  \varepsilon(K)&=\varepsilon(K^{-1})=1,\\
  S(E)&=-EK^{-1}, & S(F)&=-KF, & S(K)&=K^{-1}.
\end{align*}

 Let $\UsltH$ be
the $\C$-algebra given by the generators $E, F, K, K^{-1}, H$,
relations
  \eqref{E:RelUslt}, and the following additional    relations:
\begin{align*}
  HK&=KH, & HK^{-1}&=K^{-1}H, & [H,E]&=2E, & [H,F]&=-2F.
\end{align*}
The algebra $\UsltH$ is a Hopf algebra with coproduct $\Delta$, counit
$\varepsilon$, and antipode $S$ defined as above on $E,F,K,K^{-1}$ and defined
on $H$ by the formulas
\begin{align*}
  \Delta(H)&=H\otimes 1 + 1 \otimes H, & \varepsilon(H)&=0, &
  S(H)&=-H.
\end{align*}

Following \cite{GPT}, we define $\Ubar$ to be the quotient of $\UsltH$ by the
relations $E^{\ro}=F^{\ro}=0$.  It is easy to check that the operations above
turn $\Ubar$ into a Hopf algebra.

Let $V$ be a $\Ubar$-module.  An eigenvalue $\lambda\in \C$ of the operator
$H:V\to V$ is called a \emph{weight} of $V$ and the associated eigenspace
$E_\lambda(V)$ is called a \emph{weight space}.  We call $V$ a \emph{weight
  module} if $V$ is finite-dimensional, splits as a direct sum of weight
spaces, and $q^H=K$ as operators on $V$.

Given two weight modules $V$ and $W$, the operator $H$ acts as $H\otimes 1 + 1
\otimes H$ on $V\otimes W$. So, $E_\lambda(V) \otimes E_{\mu}(W) \subset
E_{\lambda +\mu}(V\otimes W)$ for all $\lambda, \mu\in \C$.  Moreover,
$q^{\Delta(H)}=\Delta(K)$ as operators on $V\otimes W$.  Thus, $V\otimes W$ is
a weight module.

Let $\catd$ be the tensor Ab-category of weight $\Ubar$-modules. By Section
6.2 of \cite{GPT}, $\catd$ is a ribbon Ab-category with ground ring $\C$.  In
particular, for any object $V$ in $\catd$, the dual object and the duality
morphisms are defined as follows: $V^* =\Hom_\C(V,\C)$ and
\begin{align}\label{E:DualityForCat}
  b_{V} :\, & \C \rightarrow V\otimes V^{*} \text{ is given by } 1 \mapsto
  \sum
  v_j\otimes v_j^*,\notag\\
  d_{V}:\, & V^*\otimes V\rightarrow \C \text{ is given by }
  f\otimes w \mapsto f(w),\notag\\
  d_{V}':\, & V\otimes V^{*}\rightarrow \C \text{ is given by } v\otimes f
  \mapsto f(K^{1-{\ro}}v),\notag
  \\
  b_V':\, & \C \rightarrow V^*\otimes V \text{ is given by } 1 \mapsto \sum
  v_j^*\otimes K^{{\ro}-1}v_j,
\end{align}
where $\{v_j\}$ is a basis of $V$ and $\{v_j^*\}$ is the dual basis of $V^*$.

For an isomorphism classification of simple weight $\Uslt$-modules (i.e.,
modules on which $K$ acts diagonally), see for example \cite{Kas}, Chapter VI.
This classification implies that simple weight $\Ubar$-modules are classified
up to isomorphism by highest weights. For $i\in \C$, we denote by $V_{i}$ the
simple weight $\Ubar$-module of highest weight $i+{\ro}-1$. This notation
differs from the standard labeling of highest weight modules. Note that
$V_{-{\ro}+1}=\C$ is the trivial module and $V_0$ is the so called Kashaev
module.

We now define basic data in $\catd$.  Set $I=\C$ and define an involution
$i\mapsto i^*$ on $I$ by $i^*= i$ if $i \in \Z$ and $i^*=-i$ if $i\in
\C\setminus \Z$.

\begin{lemma}\label{L:w_iUsltH} There are isomorphisms
  $\{w_{i}:V_{i} \rightarrow (V_{{i}^*})^*\}_{{i} \in I}$
  satisfying~\eqref{E:FamilyIso}.
\end{lemma}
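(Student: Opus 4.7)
The plan is to construct $w_i \colon V_i \to V_{i^*}^*$ as a scalar multiple of a distinguished module isomorphism, with the scalar chosen so that \eqref{E:FamilyIso} holds. The two ingredients are (i) an abstract module isomorphism $V_i \cong V_{i^*}^*$, unique up to scalar by simplicity, and (ii) a normalization forced by \eqref{E:FamilyIso}.

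First I would identify $V_{i^*}^*$ as a simple weight $\Ubar$-module whose highest weight matches that of $V_i$. Since $S(H) = -H$, the $H$-weights of the dual of a weight module are the negatives of the original weights. The module $V_{i^*}$ has weights $i^* + r - 1, i^* + r - 3, \ldots, i^* - r + 1$, hence $V_{i^*}^*$ has highest weight $-(i^* - r + 1) = -i^* + r - 1$. For $i \in \C \setminus \Z$ the convention $i^* = -i$ makes this equal to $i + r - 1$, matching the highest weight of $V_i$. For $i \in \Z$ (where $i^* = i$), one uses the specific structure of $V_i$ at integer highest weights supplied in \cite{GPT} to conclude $V_i \cong V_i^*$. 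In either case I fix a $\Ubar$-linear isomorphism $\widetilde w_i \colon V_i \to V_{i^*}^*$ sending a highest weight vector to a highest weight vector; by simplicity it is unique up to a nonzero scalar.

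To normalize, I write $w_i = c_i \widetilde w_i$ and determine $c_i \in \C^*$ from \eqref{E:FamilyIso}. Evaluating both sides on a weight basis of $V_{i^*} \otimes V_i$ using \eqref{E:DualityForCat}: the left side applies $w_{i^*}$ followed by the standard evaluation $d_{V_i}$, while the right side applies $w_i$ followed by $d'_{V_{i^*}}$, which carries the pivotal factor $K^{1-r}$ acting on a weight-$\mu$ vector as $q^{(1-r)\mu}$. Matching these on the highest weight contribution yields a scalar relation of the form $c_{i^*} = \gamma_i c_i$ for some explicit $\gamma_i \in \C^*$. Applying the same relation with $i$ and $i^*$ swapped and using $(i^*)^* = i$ forces $\gamma_i \gamma_{i^*} = 1$, which follows from the symmetry of the duality formulas; the equation is then solved by any consistent (e.g.\ square-root) choice of $c_i$.

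The main technical hurdle is the integer case: for $i \in \Z$ the structure of $V_i$ is more delicate than in the generic case, and the self-duality $V_i \cong V_i^*$ ultimately depends on the specific model of $\catd$ developed in \cite{GPT}. Once this abstract isomorphism is in hand, determining the scalars $c_i$ reduces to a routine weight-space computation comparing $d_{V_i}$ with $d'_{V_{i^*}}$ on a highest-weight pair.
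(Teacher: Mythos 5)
For $i\in\C\setminus\Z$ your argument is essentially the paper's: $V_{-i}^*$ is simple with the same highest weight as $V_i$, one fixes $w_i$ arbitrarily, and \eqref{E:FamilyIso} for the index $i$ then determines $w_{i^*}$; the consistency condition you call $\gamma_i\gamma_{i^*}=1$ does hold, because both sides of \eqref{E:FamilyIso} are invariant pairings and an invariant pairing is unchanged under $K^{1-r}\otimes K^{1-r}$. So that half is sound, if lightly justified.

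The gap is the integer case, which is precisely where your normalization device cannot work. When $i=i^*$ the relation $c_{i^*}=\gamma_i c_i$ reads $c_i=\gamma_i c_i$, which is solvable only if $\gamma_i=1$; and $\gamma_i$ does not depend on $c_i$ at all, since rescaling $w_i$ multiplies both sides of \eqref{E:FamilyIso} by the same factor. Hence no ``square-root'' or other choice of scalar can arrange the identity: one must verify directly that the distinguished isomorphism satisfies \eqref{E:FamilyIso} on the nose. That verification is the actual content of the paper's proof: with $v_j=F^jv_0$ and $\dim V_i=d$, it sets $w_i(v_0)=v_{d-1}^*$ and checks, by computing $S(F)^{d-1}v_0$ and $F^{d-1}v_{d-1}^*$, that $(w_i^{-1})^*w_i$ is multiplication by $K^{1-r}$. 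Your proposal replaces this computation by the assertion that ``$V_i\cong V_i^*$ by the structure in \cite{GPT}'', which omits the scalar check and also hides a real subtlety in the existence of the isomorphism itself: carrying out your own weight comparison for integer $i$ gives $V_i^*$ the highest weight $2d-2-(i+r-1)$ with $d=\dim V_i$, and matching this against the highest weight $i+r-1$ of $V_i$ imposes the nontrivial constraint $d=i+r$. So the self-dual case is not a routine citation plus normalization; it is exactly the part of the lemma that requires the explicit module computation, and your proposal leaves it unproved.
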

\begin{proof}
  If $i\in \C\setminus \Z$, then $ V_{i}^* \cong V_{-i} $, see \cite{GPT} . We
  take an arbitrary isomorphism $V_{i} \rightarrow (V_{{i}^*})^*=V_{-i}^*$ for
  $w_{i}$ and choose $w_{{i}^*}$ so that it satisfies \eqref{E:FamilyIso}.  If
  $i\in\Z$, then $i=i^*$ and there is a unique integer $d $ such that $1\leq
  d\leq {\ro}$ and $d\equiv i$ (mod ${\ro}$).  Let $v_0$ be a highest weight
  vector of $V_i$. Set $v_j=F^jv_0$ for $j=1,{\ldots},d-1$.  Then
  $\{v_0,{\ldots},v_{d-1}\}$ is a basis of $V_i$ and in particular, $\dim V_i=
  d$.  Let $\{v_j^*\}$ be the basis of $V_i^*$ dual to $\{v_j\}$ so that
  $v_j^*(v_k)=\delta_{j,k}$.  Define an isomorphism $w_i: V_{i} \rightarrow
  (V_{{i}})^*$ by $v_0 \mapsto v_{d-1}^*$.  To verify \eqref{E:FamilyIso}, it
  suffices to check that the morphism $(w_i^{-1})^* w_i:V_i \rightarrow
  V_i^{**}=V_i$ is multiplication by $K^{1-{\ro}}$.  To do this it is enough
  to calculate the image of $v_0$.  A direct calculation shows that
  $$
  S(F)^{d-1}v_0=(-1)^{d-1}q^{-(d-1)}v_{d-1}=q^{{\ro}(d-1)-(d-1)}v_{d-1}.
  $$
  It follows that $F^{d-1}v^*_{d-1}=q^{({\ro}-1)(d-1)}v_0^*$. 
  Thus for $k=0,{\ldots},d-1$,
  $$
  (w^{-1}_i)^*(v_{d-1}^*)(v_k^*)=v_{d-1}^*(w^{-1}_i(v_k^*))= \delta_{k,0}\,
  q^{-({\ro}-1)(d-1)}.
  $$
  In other words, $(w^{-1}_i)^*(v_{d-1}^*)= q^{-({\ro}-1)(d-1)}v_0^{**}$.
  Under the standard identification $V_i^{**}=V_i$, we obtain
  $$
  (w_i^{-1})^* w_i(v_0)=(w_i^{-1})^*(v_{d-1}^*)=q^{-({\ro}-1)(d-1)}v_0.
  $$
  The proof is completed by noticing that
  $K^{1-{\ro}}v_0=q^{-({\ro}-1)(d-1)}v_0.$
\end{proof}

Set $I_0=(\C \setminus \Z) \cup \ro\Z\subset I=\C$. We say that a simple
weight $\Ubar$-module $V_i$ is \emph{typical} if $i\in I_0$.  By \cite{GPT},
every typical module $J=V_i$ is ambidextrous and the set $I_0(J)$ defined by
\eqref{E:DefdJ--} is equal to $I_0$.  Formula \eqref{E:DefdJ} defines a
function $\qd_J:I_0\rightarrow \C$. As shown in \cite{GPT}, up to
multiplication by a non-zero complex number, $\qd_J$ is equal to the function
$\qd:I_0\rightarrow \C$ defined by
\begin{equation}\label{E:dUsltH}
  \qd(k)
  =\frac{1}{\prod_{j=0}^{{\ro}-2}\qn{k-j-1}} \quad {\text {for}}\quad k\in I_0
\end{equation}
(here $\qn a=q^a-q^{-a}$ for all $a\in \C$). Note in particular that
\begin{equation}\label{E:dUsltH+}\qd(k)=\qd({k+2{\ro}})\end{equation} for all
$k\in I_0$. Theorem \ref{eee} implies the following lemma.
\begin{lemma}\label{L:tambipair}
  $(I_0,\qd)$ is a t-ambi pair in $\catd$.
\end{lemma}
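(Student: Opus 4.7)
The plan is to derive this lemma directly from Corollary \ref{C:ribbonambi}, applied to any typical simple module, together with the proportionality $\qd_J = c \, \qd$ recorded in the paragraph preceding the statement. First I would fix any typical $J = V_i$ with $i \in I_0$; by the results of \cite{GPT} recalled just above, $J$ is a simple ambidextrous object of the ribbon Ab-category $\catd$. Corollary \ref{C:ribbonambi} then produces a t-ambi pair $(I_0(J), \qd_J)$ in $\catd$, and the identification $I_0(J) = I_0$ is also recorded above.

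Next, I would observe that the defining conditions of a t-ambi pair are homogeneous in the dimension function. Both the symmetry requirement $\qd(i) = \qd(i^*)$ and the cutting identity
$$\qd(i) \, \langle T_{V_i} \rangle \;=\; \qd(j) \, \langle T_{V_j} \rangle$$
are preserved under multiplication of $\qd$ by any nonzero scalar of the ground ring $K = \C$. Since $\qd_J = c \, \qd$ for some $c \in \C^\times$, the pair $(I_0, \qd)$ inherits both conditions from the pair $(I_0, \qd_J)$ and is therefore itself t-ambi, as claimed.

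One small preliminary check is that formula \eqref{E:dUsltH} actually defines a $\C$-valued function on $I_0$, i.e., that no factor $\qn{k-j-1}$ in the denominator vanishes. Since $\qn{a} = q^a - q^{-a} = 0$ iff $a \in \ro\Z$ (using that $k$ is coprime to $\ro$), one verifies immediately that for $k \in \ro\Z$ the shifts $k-1, k-2, \ldots, k-\ro+1$ lie strictly between two consecutive multiples of $\ro$, while for $k \in \C \setminus \Z$ the numbers $k-j-1$ are not even integers; in both cases the product in the denominator of \eqref{E:dUsltH} is nonzero.

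I do not expect any substantive obstacle: all the nontrivial content, namely the cutting-invariance property, is already encapsulated in Theorem \ref{eee} and Corollary \ref{C:ribbonambi}, so the lemma reduces to the essentially bookkeeping step of transferring that invariance from the intrinsically defined $\qd_J$ to the explicit closed-form function $\qd$ of \eqref{E:dUsltH}.
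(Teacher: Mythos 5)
Your argument is correct and is essentially the paper's own proof: the paper derives the lemma directly from Theorem \ref{eee} (equivalently Corollary \ref{C:ribbonambi}) applied to a typical ambidextrous $J$, combined with the fact that $\qd_J$ and $\qd$ differ by a nonzero scalar and that the t-ambi conditions are homogeneous in the dimension function. The only imprecision is in your auxiliary non-vanishing check: since $q=\e^{ki\pi/\ro}$, one has $\qn{a}=0$ iff $ak\in\ro\Z$, which for non-integer $a$ is not the criterion you state; this side remark does not affect the main argument, which the paper leaves at the same level of detail.
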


We can apply the techniques of Sections~\ref{S:Modified6j} and \ref{S:Prop} to
the category $\catd$ with basic data $\{V_i, w_{i} \}_{{i} \in I}$ and the
t-ambi pair $(I_0,\qd)$.  This yields a $6j$-symbol $\sjtop ijklmn$ for all
indices $i,j,k,l,m,n\in I=\C$ with at least one of them in $I_0$.  In the case
$i,j,k,l,m,n\in \C\setminus\Z$ this $6j$-symbol is identified with a complex
number: for any numbers $i,j,k \in \C\setminus\Z$ we have that $\dim
H(i,j,k)=0$ or $\dim H(i,j,k)=1$. In the last case, there is a natural choice
of an isomorphism $H(i,j,k)\simeq \C$. Hence the $6j$-symbols can be viewed as
taking value in $\C$ and can be effectively computed via certain recurrence
relations forming a ``skein calculus'' for $\catd$-colored ribbon trivalent
graphs. These relations and the resulting explicit formulas for the
$6j$-symbols will be discussed in \cite{Todo}. We formulate here one
computation.

\begin{example}\label{Ex:compute}
  For $\ro=3$ and $q=\e^{i\pi/3}$ we can compute $\sjtop ijklmn$ for
  all
  $i,j,k,l,m,n\in \C\setminus\Z$ as follows.
  \begin{enumerate}
\item If
  $k=i+j+2$, $l=n-j-2$, and $m=n+i+2$, then
  $$\sjtop ijklmn=\qn{n+1}\qn{n+2}.$$
\item If
  $k=i+j$, $l=n-j$, and $m=n+i+2$, then
  $$\sjtop ijklmn=\qn{j+2}\qn{n+1}.$$
\item If
  $k=i+j$, $l=n-j$, and $m=n+i$, then
  $$\sjtop ijklmn=-\left(q^{i+j-n}+q^{i-j+n}+q^{-i+j+n}+q^{i-j-n}
    +q^{-i+j-n}+q^{-i-j+n}\right).$$
\item For tuples $i,j,k,l,m,n\in \C\setminus\Z$ obtained from the tuples in
  (1), (2), (3) by tetrahedral permutations, the value of the $6j$-symbol is
  computed by (1), (2), (3). In all other cases
  $$\sjtop ijklmn=0.$$
\end{enumerate}
\end{example}

Note finally that in the present setting the admissibility condition of
Section \ref{S:Prop} is generically satisfied.


\section{Example: Quantum $6j$-symbols from $\Uslt$ at roots of
  unity} \label{SS:Uslt6j} In this section we
construct a category of $\Uslt$-modules which is pivotal but {\it a
priori} not ribbon. We use the results of Section \ref{SS:UsltH} to
provide this category with basic data and a t-ambi pair.

Let $\ro$, $q$, and $\Uslt$ be as in Section \ref{SS:UsltH}. Assume
additionally that $\ro$ is odd and set $\m=\frac{\ro-1}{2}$. Let
$\Utilde$ be the Hopf algebra obtained as the quotient of $\Uslt$
  by the relations $E^{\ro}=F^{\ro}=0$.

A  $\Utilde$-module  $V$ is a \emph{weight module} if $V$ is finite
dimensional and $K$ act diagonally on $V$.  We say that $v\in V$ is
a \emph{weight vector} with \emph{weight} $\tilde\lambda\in
\C/2{\ro}\Z$ if $Kv=q^{\tilde\lambda}v$. Let $\cat$ be the tensor
Ab-category of   weight $\Utilde$-modules (the ground ring is $\C$).
\begin{lemma}
  The category $\cat$ is a pivotal Ab-category with duality morphisms
  \eqref{E:DualityForCat}.
\end{lemma}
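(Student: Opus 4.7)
The plan is to verify directly that $\cat$ satisfies the axioms of a pivotal Ab-category. The tensor Ab-category structure on $\cat$ (tensor product from $\Delta$, unit $\C$, ground ring $\C$) was already established in the discussion preceding the lemma. What remains is to check that the maps in \eqref{E:DualityForCat} are $\Utilde$-module morphisms satisfying the zigzag identities, and that the compatibility conditions $V^*=V^\bullet$, $f^*=f^\bullet$, $\gamma_{V,W}=\gamma'_{V,W}$ hold.

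The crucial step is the module-morphism property. Since $K$ acts on weight vectors with eigenvalues $q^{\tilde\lambda}$ for $\tilde\lambda \in \C/2\ro\Z$, the operator $K^{\pm(\ro-1)}$ acts as the scalar $q^{\pm(\ro-1)\tilde\lambda}$, which is well-defined because $q^{2\ro}=1$. The standard evaluation and coevaluation $b_V,d_V$ are module morphisms by the usual antipode axiom. For $b'_V$ and $d'_V$ a short Hopf-algebra computation reduces the module-morphism condition to the identity
\[
g\,S(x)\,g^{-1} \;=\; S^{-1}(x) \qquad \text{for all } x\in \Utilde, \text{ where } g=K^{\ro-1}.
\]
I would verify this on the generators $E,F,K$. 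It is trivial for $K$. For $E$, one computes
\[
g\,S(E)\,g^{-1} \;=\; -(gEg^{-1})K^{-1} \;=\; -q^{2(\ro-1)}EK^{-1} \;=\; -q^{-2}EK^{-1},
\]
using $q^{2\ro}=1$ in the last equality; this matches $S^{-1}(E)=-q^{-2}EK^{-1}$, verified from $S(-q^{-2}EK^{-1})=E$. The case of $F$ is symmetric (using $FK^{\ro}=K^{\ro}F$, again a consequence of $q^{2\ro}=1$), and both sides are anti-homomorphisms, so the identity extends to all of $\Utilde$.

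The zigzag identities are then routine; for instance,
\[
(d'_V\otimes \Id_V)(\Id_V\otimes b'_V)(v) \;=\; \sum_j v_j^*(K^{1-\ro}v)\,K^{\ro-1}v_j \;=\; K^{\ro-1}K^{1-\ro}v \;=\; v.
\]
For the compatibility, $V^*$ and $V^\bullet$ share the underlying vector space $\Hom_\C(V,\C)$, and a direct check shows that the $\Utilde$-actions induced by the left and right dualities both equal the standard left-dual action $(xf)(v)=f(S(x)v)$, so $V^*=V^\bullet$. The equalities $f^*=f^\bullet$ and $\gamma_{V,W}=\gamma'_{V,W}$ then follow by substituting into the formulas of Section~\ref{S:SphCat}. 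The main obstacle is the pivotal identity displayed above: in contrast to the conventional pivot $K$ of $\Uslt$ (which implements $S^2$), the element $K^{\ro-1}$ implements $S^{-2}$, and its validity hinges essentially on $q^{2\ro}=1$, so the pivotal structure is specific to the root-of-unity quotient $\Utilde$.
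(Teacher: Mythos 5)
Your proof is correct and follows the same route as the paper, which simply asserts that the compatibility of the two dualities is "a straightforward calculation"; you have carried out that calculation, correctly identifying the key point that $K^{\ro-1}$ is grouplike and satisfies $K^{\ro-1}S(x)K^{1-\ro}=S^{-1}(x)$ on $\Utilde$ (equivalently, $K^{1-\ro}$ implements $S^2$), which is exactly what makes $b'_V,d'_V$ module maps and forces $V^*=V^\bullet$, $f^*=f^\bullet$, $\gamma_{V,W}=\gamma'_{V,W}$. The generator checks, the use of $q^{2\ro}=1$, and the zigzag verification are all accurate.
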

\begin{proof}
  It is a straightforward calculation to show that the left and right duality
  are compatible and   define a pivotal structure on $\cat$.
\end{proof}

 If $V$ is a   weight
$\Utilde$-module containing a weight vector $v$ of weight
$\tilde\lambda$ such that $Ev=0$ and $V= \Uslt v$, then   $V$ is a
\emph{highest weight module} with \emph{highest weight}
$\tilde\lambda$.  The classification of simple weight
$\Uslt$-modules (see Chapter VI of \cite{Kas}) implies that highest
weight $\Utilde$-modules are classified up to isomorphism by their
highest weights.  For any $\tilde i\in \C/2{\ro}\Z$, we denote  by
$V_{\tilde i}$ the highest weight module with highest weight
$\widetilde{i}+ \widetilde {\ro-1}$, where $\widetilde {\ro-1}\in
\C/2{\ro}\Z$ is the projection of $r-1$ to $\C/2{\ro}\Z$. The
following lemma yields basic data in $\cat$.

\begin{lemma}\label{L:w_tildei} Set $I^\cat=(\C\setminus \Z)/2{\ro}\Z $ and
  define an involution $\tilde i\mapsto {\tilde i}^*$ on $I^\cat$ by
  $\tilde{i}^*=-\tilde i$.  Then there are isomorphisms $\{w_{\tilde
    i}:V_{\tilde i} \rightarrow (V_{\tilde{i}^*})^*\}_{\tilde{i} \in I^\cat}$
  satisfying Equation~\eqref{E:FamilyIso}.
\end{lemma}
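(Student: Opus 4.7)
The plan is to deduce the statement directly from Lemma \ref{L:w_iUsltH} using the forgetful functor from $\catd$ to $\cat$ that sends a weight $\Ubar$-module to its underlying $\Utilde$-module (i.e., forgets the action of $H$ while retaining $E, F, K^{\pm 1}$). This is well defined because a weight $\Ubar$-module remains finite dimensional with diagonal $K$-action after forgetting $H$. The first step is to check that this functor is strictly monoidal and strictly preserves left and right duality, hence is a pivotal functor. Monoidality is immediate since the coproducts of $\Ubar$ and $\Utilde$ agree on $E, F, K^{\pm 1}$. Preservation of duality follows because the formulas \eqref{E:DualityForCat} for $b_V, d_V, b'_V, d'_V$ involve only $K$, not $H$, so the duality morphisms of $\catd$ descend verbatim to those of $\cat$.

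Next, for each $\tilde i \in I^\cat = (\C\setminus \Z)/2r\Z$, I would fix a lift $i \in \C\setminus \Z$ and designate $-i$ as the lift of $\tilde i^* = -\tilde i$. The simple $\Ubar$-module $V_i$ of Section \ref{SS:UsltH} has a highest-weight vector of $K$-eigenvalue $q^{i+r-1}$, so under the forgetful functor $V_i$ maps to $V_{\tilde i}$ in $\cat$, and similarly $V_{-i}$ maps to $V_{\tilde i^*}$. Observe that $\tilde i \neq \tilde i^*$, since $2\tilde i = 0$ in $\C/2r\Z$ would force $2i \in 2r\Z$, i.e. $i \in r\Z \subset \Z$, contradicting $i \in \C\setminus\Z$. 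Hence the case $i \in \C\setminus\Z$ of the proof of Lemma \ref{L:w_iUsltH} applies and yields isomorphisms $w_i\colon V_i \to V_{-i}^*$ and $w_{-i}\colon V_{-i} \to V_i^*$ in $\catd$ satisfying \eqref{E:FamilyIso}.

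Finally, I would define $w_{\tilde i}$ and $w_{\tilde i^*}$ as the images of $w_i$ and $w_{-i}$ under the forgetful functor. Because this functor preserves tensor products, identity morphisms, compositions, and the duality morphisms $d_V, d'_V$, applying it to the equation \eqref{E:FamilyIso} in $\catd$ yields exactly the desired equation in $\cat$. The main delicate point, and essentially the only step requiring care, is to verify that the forgetful functor correctly identifies $V_i^*$ with $V_{\tilde i^*}$ on the nose (not merely up to some ambiguous isomorphism), so that $F(w_i)$ really is a morphism $V_{\tilde i} \to V_{\tilde i^*}^*$; this is immediate once one notes that $V_i^*$ as a $\Ubar$-module has the same underlying vector space and the same $E, F, K^{\pm 1}$ action as $V_{\tilde i}^*$ viewed in $\cat$. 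Well-definedness with respect to the choice of lift is not an issue, since the lemma only asserts existence of the family $\{w_{\tilde i}\}$.
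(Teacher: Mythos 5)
Your proof is correct, but it takes a genuinely different route from the paper's. The paper disposes of this lemma in one line: it repeats the (easy, non-integral) case of the proof of Lemma \ref{L:w_iUsltH} directly inside $\cat$ --- since $V_{\tilde i}^*\cong V_{-\tilde i}$ for $\tilde i\in I^\cat$, one takes an arbitrary isomorphism for $w_{\tilde i}$ and then solves \eqref{E:FamilyIso} for $w_{\tilde i^*}$, using that $\tilde i\neq\tilde i^*$. You instead transport the already-constructed family $\{w_i\}$ from $\catd$ to $\cat$ along the forgetful functor $\varphi$. This is legitimate: $\varphi$ is strictly monoidal and strictly compatible with the dualities because the coproduct, the antipode, and the morphisms \eqref{E:DualityForCat} involve only $E,F,K^{\pm 1}$ and not $H$, so $\varphi(V^*)=\varphi(V)^*$ and $\varphi$ carries equation \eqref{E:FamilyIso} in $\catd$ to equation \eqref{E:FamilyIso} in $\cat$ verbatim; and your consistent choice of lifts $\{i,-i\}$ for each pair $\{\tilde i,-\tilde i\}$ (distinct, as you check, since $i\notin\Z$) removes any ambiguity, the lemma being purely an existence statement. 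What your approach buys is that it makes explicit the compatibility of the two systems of basic data under $\varphi$, which the paper uses only implicitly later (the commutative square \eqref{E:CommDiag} and the proof of Theorem \ref{rol}). What it costs is the need to verify that $\varphi$ is a strict pivotal functor identifying $\varphi(V_{-i})$ with $V_{\tilde i^*}$ on the nose --- the point you rightly single out as the only delicate one, and which the paper sidesteps entirely by arguing directly in $\cat$.
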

\begin{proof}
  The lemma follows as in the proof of Lemma \ref{L:w_iUsltH}.
\end{proof}

\begin{theorem}\label{rol} Let $\qd: I^\cat\to  \C$ be the function
\begin{equation}\label{E:dUslt}
  \qd( {\tilde
    k})=\frac{1}{\prod_{j=0}^{{\ro}-2}
    \qn{\widetilde{k}-\widetilde{1}-\widetilde{j}}}
\end{equation}
for any $\tilde k \in I^\cat$. Then the pair $(I_0=I^\cat, \qd)$ is
t-ambi in $\cat$.
\end{theorem}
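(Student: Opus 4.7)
The plan is to reduce the theorem to Lemma \ref{L:tambipair} via a natural forgetful functor $\hat F\colon\catd\to\cat$. The Hopf algebra inclusion $\Utilde\hookrightarrow\Ubar$ (consistent with $K=q^H$ on weight modules) turns every weight $\Ubar$-module into a weight $\Utilde$-module by restriction of scalars, giving $\hat F$. Since both pivotal structures are defined by the same formulas \eqref{E:DualityForCat} depending only on the $K$-action, $\hat F$ is a strict pivotal tensor functor, and for $i\in\C\setminus\Z$ it sends $V_i$ to $V_{\tilde i}$, where $\tilde i\in I^\cat$ is the projection of $i$ to $\C/2r\Z$. By the standard principle that a pivotal tensor functor intertwines the graphical calculus, for any planar $\catd$-colored ribbon graph $S$ one has $G(\hat F(S))=\hat F(G(S))$. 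Moreover, \eqref{E:dUsltH+} shows that the function \eqref{E:dUsltH} descends to $\C/2r\Z$, and its restriction to $I^\cat$ coincides with \eqref{E:dUslt}; hence $\qd(i)=\qd(\tilde i)$ whenever $i\in\C\setminus\Z$ projects to $\tilde i\in I^\cat$.

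The core step is a lifting claim: every $\cat$-colored trivalent ribbon graph $\tilde T$ with edges colored by $\{V_{\tilde i}\}_{\tilde i\in I^\cat}$ arises as $\hat F(T)$ for some $\catd$-colored lift $T$ with edge colors in $\{V_i\}_{i\in\C\setminus\Z}\subset\{V_i\}_{i\in I_0}$. I would first lift each edge label $\tilde i_e$ to some $i_e\in\C\setminus\Z$ and then lift each coupon morphism $\tilde f_v$. Coupon lifting is possible because, for suitable edge-lifts, the Clebsch--Gordan decomposition of typical tensor products matches in $\catd$ and in $\cat$: the $r$ typical summands $V_{i+j+r-1-2k}$ of $V_i\otimes V_j$ in $\catd$ project to $r$ distinct typical summands of $V_{\tilde i}\otimes V_{\tilde j}$ in $\cat$, and $\hat F$ becomes bijective on the relevant Hom spaces. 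Consistency of edge-lifts across coupons reduces to an affine system over $\C$ whose reduction modulo $2r\Z$ is solvable by hypothesis (since $\tilde T$ is a valid $\cat$-colored graph). I expect this lifting step to be the main obstacle: if it proves delicate in degenerate configurations, one can first establish the t-ambi identity for generic $\tilde T$ (where lifting is straightforward) and extend by density, since both sides of the identity are algebraic in the label data.

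Once $T$ has been constructed, the conclusion follows easily. Applying the functoriality identity $G(\hat F(S))=\hat F(G(S))$ to a cutting presentation $T_{V_i}$ yields $\langle T_{V_i}\rangle=\langle\tilde T_{V_{\tilde i}}\rangle$ in $\C$. Hence for any two edges of $\tilde T$ colored by $V_{\tilde i_1},V_{\tilde i_2}$ with lifts $V_{i_1},V_{i_2}$,
$$\qd(\tilde i_1)\langle\tilde T_{V_{\tilde i_1}}\rangle=\qd(i_1)\langle T_{V_{i_1}}\rangle=\qd(i_2)\langle T_{V_{i_2}}\rangle=\qd(\tilde i_2)\langle\tilde T_{V_{\tilde i_2}}\rangle,$$
where the middle equality is Lemma \ref{L:tambipair}. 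This verifies \eqref{El} for $(I^\cat,\qd)$ in $\cat$.
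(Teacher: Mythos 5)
Your overall strategy --- transporting the problem along the forgetful functor $\varphi:\catd\to\cat$ and invoking Lemma \ref{L:tambipair} --- is exactly the paper's strategy, and the final three-line computation at the end of your proposal is verbatim the paper's concluding step. The gap is in the lifting claim, and it is not a technical delicacy that genericity can repair: it is simply false that every $I^\cat$-colored trivalent ribbon graph in $S^2$ lifts to a $\catd$-colored graph. For each coupon the existence of a nonzero morphism forces the (signed) sum of the three adjacent labels to be congruent mod $2\ro\Z$ to an even integer $n_v\in\{-2\m,\dots,2\m\}$ (the ``height'' of the coupon); a lift of the edge labels to $\C$ compatible with nonvanishing of the corresponding $\catd$-multiplicity spaces must satisfy the \emph{integral} equations $i+j+k=n_v$ at every coupon. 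Summing these over all coupons of a closed connected graph, each edge contributes with both signs, so a necessary condition is $\sum_v n_v=0$ in $2\Z$ --- whereas the cocycle condition you invoke only guarantees $\sum_v n_v=0$ in $\C/2\ro\Z$. Thus your ``affine system over $\C$ whose reduction modulo $2\ro\Z$ is solvable'' can fail to be solvable integrally: the obstruction is the total height $[w]\in 2\Z$, which need only be a multiple of $2\ro$. Since this obstruction is a locally constant, discrete invariant of the coloring (and colorings with nonintegral coupon heights are the generic ones, with vanishing multiplicity spaces), no density or algebraicity argument moves you from the liftable stratum to the non-liftable one.

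The paper closes this gap with Lemmas \ref{L:integral} and \ref{L:LiftsTotChargeZero}: if some coupon has nonintegral height, all multiplicity spaces vanish and $G_\cat(T_V)=0$ for every cutting presentation, so \eqref{El} holds trivially; if all heights are integral but the total height $[w]$ is a nonzero multiple of $2\ro$, the \emph{cut-open} graph $T_V$ still lifts --- but only to a morphism $V_\alpha\to V_{\alpha+[w]}$ between non-isomorphic simple $\catd$-modules, which forces $G_\cat(T_V)=0$ again. Only when $[w]=0$ does the closed graph lift, and there your argument (and the paper's) applies. So you need to add the case analysis on the heights and supply the vanishing statements in the two obstructed cases; as written, your proof silently assumes away precisely the configurations where the identity holds for a different reason.
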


A proof of Theorem \ref{rol} will be given at the end of the section using the
following preliminaries. Recall the category $\catd=\Ubar$-mod from Section
\ref{SS:UsltH}. Let $\varphi: \catd \rightarrow \cat$ be the functor which
forgets the action of $H$.  Consider the functors $G_\cat$ and $G_\catd$
associated to $\cat$ and $\catd$, respectively (see Section
\ref{S:InvOfGraphs}). The functor $\varphi: \catd \rightarrow \cat$ induces
(in the obvious way) a functor $\varphi_{\Graph}:\Graph_\catd \rightarrow
\Graph_\cat$ such that the following square diagram commutes:
\begin{equation}\label{E:CommDiag}
  \xymatrix 
  {
    \Graph_\catd  \ar[r]^{\varphi_{\Graph}}  \ar[d]_{G_\catd} &  \Graph_\cat
    \ar[d]^{G_\cat}
    \\
    \catd \ar[r]_{\varphi} & \cat
  }
\end{equation}

We say that the highest weight $\Utilde$-module $V_{\tilde i}$ is
\emph{typical} if $\tilde i \in I^\cat \cup \{\tilde{0},\tilde{r}\}$. In other
words, $V_{\tilde i}$ is typical if its highest weight is in $I^\cat \cup
\{\widetilde{-1},\widetilde{{2\m}}\}\subset \C/2{\ro}\Z$.  A typical module
$V_{\tilde i}$ has dimension ${\ro}$ and its weight vectors have the weights
$\widetilde{i}+ 2\widetilde{k}$ where $k=-\m,-\m+1,{\ldots},\m$.  It can be
shown that if a typical module $V$ is a sub-module of a finite dimensional
$\Utilde$-module $W$, then $V$ is a direct summand of $W$.  Therefore, if
$V_{\tilde i} $ and $V_{\tilde j}$ are typical modules such that $\tilde
i+\tilde j\notin \Z/2{\ro}\Z$, then
\begin{equation}\label{E:TensorProdDe}
V_{\tilde i}\otimes V_{\tilde j}\cong \bigoplus_{k=-{\m}}^{\m}
V_{\widetilde{i}+ \widetilde{j}+ 2\widetilde{k}}.
\end{equation}

As in Section \ref{SS:UsltH}, for any $i\in \C$, we denote by $V_i$ a weight
module in $\catd$ of highest weight $i+ {\ro}-1$. Clearly, $\varphi(V_i)\simeq
V_{\tilde i}$ for all $i\in \C$, where $\tilde i= i \, ({\rm {mod}}\, 2r\Z)$
and $\simeq$ denotes isomorphism in $\cat$. To specify an isomorphism
$\varphi(V_i)\simeq V_{\tilde i}$, we fix for each $i\in \C$ a highest weight
vector in $V_i\in \catd$ and we fix for each $a\in \C/2r\Z$ a highest weight
vector in $V_a\in \cat$. Then for every $i\in \C$, there is a canonical
isomorphism $ \varphi(V_i)\to V_{\tilde i}$ in $\cat$ carrying the highest
weight vector of $V_i$ into the highest weight vector of $V_{\tilde i}$. To
simplify notation, we write in the sequel $ \varphi(V_i)= V_{\tilde i}$ for
all $i\in \C$.

We say that a triple $(i,j,k)\in \C^3$ has \emph{{\charge}} $i+j+k\in\C$. For
$i,j,k\in \C$, set $$H^{ijk} = \Hom_{\catd} (\unit, V_i\otimes V_j\otimes V_k)
\quad {\text{and}}\quad H^{ij}_k = \Hom_{\catd} (V_k, V_i\otimes V_j ).$$

\begin{lemma}\label{L:CanIso}
  Let $i,j,k\in \C\setminus \Z$. If the {\charge} of $(i,j,k)$ does not belong
  to the set $\{-2{\m},-2{\m}+2,{\ldots},2{\m}\}$, then $H^{ijk}=0$. If the
  {\charge} of $(i,j,k)$ belongs to the set
  $\{-2{\m},-2{\m}+2,{\ldots},2{\m}\}$, then the composition of the
  homomorphisms
  $$
  H^{ijk} =\Hom_{\catd} (\unit, V_i\otimes V_j\otimes V_k) \stackrel{\varphi}
  \longrightarrow \Hom_{\cat} (\unit, \varphi(V_i)\otimes \varphi(V_j) \otimes
  \varphi(V_k))
  $$
  $$
  = \Hom_{\cat} (\unit, V_{\tilde i}\otimes V_{\tilde j} \otimes V_{\tilde k})
  $$ is an isomorphism.
\end{lemma}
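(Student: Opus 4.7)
The plan is to apply rigidity in $\catd$ (via $V_i^*\cong V_{-i}$ from Section~\ref{SS:UsltH}) and the analogous $V_{\tilde i}^*\cong V_{-\tilde i}$ in $\cat$ to rewrite the two Hom spaces in the forms $\Hom_\catd(V_{-i},V_j\otimes V_k)$ and $\Hom_\cat(V_{-\tilde i},V_{\tilde j}\otimes V_{\tilde k})$, and then compare dimensions via the Clebsch--Gordan decompositions of the respective tensor products. By pivotal-rigidity symmetry one can equally present the same Hom space using the pair $(i,k)$ or $(i,j)$, so I may focus on whichever pair has non-integer sum.

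First I would handle the degenerate case in which all of $i+j$, $j+k$, $i+k$ lie in $\Z$. Since $i,j,k\in\C\setminus\Z$, this forces $i,j,k\in\frac{1}{2}+\Z$, so every $H$-eigenvalue occurring in $V_i\otimes V_j\otimes V_k$ differs from $i+j+k+3(r-1)\in\frac{1}{2}+\Z$ by an even integer and hence is never $0$. Consequently the weight-zero subspace is trivial, $H^{ijk}=0$, and simultaneously the charge $i+j+k$ is non-integer and thus outside $\{-2\m,\ldots,2\m\}$; the lemma holds vacuously. Otherwise I may assume $j+k\notin\Z$, and a direct highest-weight-vector construction using the coproduct of $\Ubar$ (paralleling the generic $\Uslt$ case) yields the Clebsch--Gordan decomposition
\begin{equation*}
V_j\otimes V_k\,\cong\,\bigoplus_{s=-\m}^{\m}V_{j+k+2s}\quad\text{in }\catd,
\end{equation*}
and applying $\varphi$ together with \eqref{E:TensorProdDe} gives the analogous decomposition of $V_{\tilde j}\otimes V_{\tilde k}$ in $\cat$.

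Typical simple $\Ubar$-modules are classified by highest weight in $\C$ and typical simple $\Utilde$-modules by highest weight in $\C/2r\Z$, so
\begin{align*}
\dim H^{ijk} &=\#\bigl\{s\in\{-\m,\ldots,\m\}:-i=j+k+2s\text{ in }\C\bigr\},\\
\dim\Hom_\cat(\unit,V_{\tilde i}\otimes V_{\tilde j}\otimes V_{\tilde k}) &=\#\bigl\{s\in\{-\m,\ldots,\m\}:-i\equiv j+k+2s\pmod{2r}\bigr\}.
\end{align*}
The first count equals $1$ exactly when $i+j+k\in\{-2\m,-2\m+2,\ldots,2\m\}$ and $0$ otherwise, proving the vanishing claim. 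For the second count, the integers $\{-2s:s=-\m,\ldots,\m\}$ fill an interval of length $4\m=2r-2<2r$ and hence have pairwise distinct residues mod $2r$; thus exactly one $s$ solves the congruence when the charge lies in $\{-2\m,\ldots,2\m\}$, and this count is also $1$.

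To finish, the functor $\varphi$ forgets only the $H$-action, so a morphism $f:\unit\to V_i\otimes V_j\otimes V_k$ and its image $\varphi(f)$ are determined by the same vector $f(1)$ in the common underlying complex vector space; in particular $\varphi$ is injective on Hom spaces, and a linear injection between $1$-dimensional spaces is an isomorphism, yielding the second claim. The principal obstacle is the Clebsch--Gordan decomposition of $V_j\otimes V_k$ in $\catd$, which is not stated in the excerpt; establishing it rigorously requires exhibiting an explicit highest weight vector of weight $j+k+2s+r-1$ for each $s$ together with a semisimplicity-of-typicals argument in $\catd$ analogous to the direct-summand discussion preceding \eqref{E:TensorProdDe}.
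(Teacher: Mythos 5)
Your proposal is correct and follows essentially the same route as the paper's proof: reduce to a $\Hom$ from a typical simple into a tensor product of two typicals, dispose of the integral-sum case by a weight argument, use the decomposition of the tensor product into typicals (the paper's \eqref{E:tensorDecomViVj} and \eqref{E:TensorProdDe}) to get both dimension counts equal to $0$ or $1$, and conclude via injectivity of $\varphi$ between one-dimensional spaces. The only cosmetic difference is that the paper fixes the pair $(i,j)$ and splits on whether $i+j\in\Z$, whereas you cyclically relabel to a pair with non-integral sum and treat the all-pairs-integral case separately.
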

\begin{proof}
  We begin with a simple observation.  Since $i\in \C \setminus \Z$, the
  character formula for $V_i$ is $\sum_{l=-\m}^{\m}u^{i+2l}$ where the
  coefficient of $u^a$ is the dimension of the $a$-weight space.  Therefore,
  the character formula for $V_i\otimes V_j$ is
  \begin{equation}
    \label{E:tensorViVj}
    \left(\sum_{l=-\m}^{\m}u^{i+2l}\right)\left(\sum_{m=-\m}^{\m}u^{j+2m}\right)
    =\sum_{l,m=-\m}^{\m}u^{i+j+2l+2m}.
  \end{equation}

  As we know, Formula \eqref{E:Hiso} defines an isomorphism $H^{ijk} \simeq
  H^{ij}_{-k}$. We claim that $\dim( H^{ij}_{-k})=1$ if the {\charge} of
  $(i,j,k)$ belongs to $\{-2{\m},-2{\m}+2,{\ldots},2{\m}\}$ and $H^{ij}_{-k}
  =0$, otherwise.  To see this, we consider two cases.

  Case 1: $i+j\in \Z$.  Since $k\notin \Z$, the {\charge} of $(i,j,k)$ is not
  an integer and does not belong to the set
  $\{-2{\m},-2{\m}+2,{\ldots},2{\m}\}$.  Equation~\eqref{E:tensorViVj} implies
  that all the weights of $V_i\otimes V_j$ are integers. Since $k\notin \Z$,
  we have $H^{ij}_{-k}=0$.

  Case 2: $i+j\notin \Z$.  It can be shown that if $V\in {\catd}$ is a typical
  module which is a sub-module of a module $W\in {\catd}$, then $V$ is a
  direct summand of $W$.  Combining with Equation \eqref{E:tensorViVj} we
  obtain that
  \begin{equation}
    \label{E:tensorDecomViVj}
    V_{i}\otimes V_{j}\simeq \bigoplus_{l=-{\m}}^{\m}
    V_{{i+j+2l}}.
  \end{equation}
  Therefore $H^{ij}_{-k}\neq 0$ if and only if $i+j+k\in
  \{-2{\m},-2{\m}+2,{\ldots},2{\m}\}$.  In addition, Formula
  \eqref{E:tensorDecomViVj} implies that if $H^{ij}_{-k}\neq 0$ then
  $\dim(H^{ij}_{-k})=1$.  This proves  the claim above and the first
  statement of the lemma.

  To prove the second part of the lemma, assume that the {\charge} of
  $(i,j,k)$ is in $\{-2{\m},-2{\m}+2,{\ldots},2{\m}\}$.  It is enough to show
  that the homomorphism
  $$\Hom_{\catd} (\unit, V_i\otimes V_j\otimes V_k) \stackrel{\varphi}  \to
  \Hom_{\cat} (\unit, \varphi(V_i)\otimes \varphi(V_j)\otimes \varphi(V_k))$$
  is an isomorphism.  This homomorphism is injective by the very definition of
  the forgetful functor.  So it suffices to show that the domain and the range
  have the same dimension.  From the claim above, $\dim(H^{ij}_{-k})=1$ and
  thus the domain is one-dimensional.  The assumption on the {\charge} of
  $(i,j,k)$ combined with Equation \eqref{E:TensorProdDe} implies that $\dim
  \Hom_\cat(\varphi(V_{-k}),\varphi(V_i)\otimes \varphi(V_j))=1$. Thus, the
  range is also one-dimensional.  This completes the proof of the lemma.
\end{proof}

We say that a triple $ \tilde i,\tilde j, \tilde k \in \C/2{\ro}\Z $ has
\emph{integral \charge} if $\tilde i+\tilde j+\tilde k\in \Z/2{\ro}\Z$ and
$\tilde i+\tilde j+\tilde k$ is even.  In this case the \emph{\charge} of the
triple $(\tilde i,\tilde j, \tilde k)$ is the unique (even) $n\in
\{-2{\m},-2{\m}+2,{\ldots},2{\m}\}$ such that $\tilde i+\tilde j+\tilde k= n\,
({\rm {mod}}\, 2r)$.

By an ${{I^\cat}}$-colored ribbon graph we mean a $\cat$-colored ribbon graphs
such that all edges are colored with modules $ V_{\tilde i} $ with $\tilde
i\in I^\cat=(\C\setminus \Z)/2{\ro}\Z $. To each ${{I^\cat}}$-colored
trivalent coupon in $S^2$ we assign a triple of elements of $I^\cat $: each
arrow attached to the coupon contributes a term $\varepsilon \tilde i\in
I^\cat$ where $V_{\tilde i}$ is the color of the arrow and $\varepsilon=+1$ if
the arrow is oriented towards the coupon, and $\varepsilon=-1$. Such a coupon
has \emph{integral \charge} if the associated triple has integral \charge. We
say that a ${{I^\cat}}$-colored trivalent ribbon graph in $S^2$ has
\emph{integral \charge s} if all its coupons have integral \charge.  The
\emph{total {\charge}} of an ${{I^\cat}}$-colored trivalent ribbon graph with
integral {\charge s} is defined to be the sum of the {\charge}s of all its
coupons. This \emph{total {\charge}} is an even integer.

\begin{lemma}\label{L:integral}
  Let $T$ be a ${{I^\cat}}$-colored trivalent ribbon graph in $S^2$.  If $T$
  has a coupon which does not have integral {\charge}, then
  $G_\cat\left(T_{V}\right)=0$ for any cutting presentation $T_{V}$ of $T$.
\end{lemma}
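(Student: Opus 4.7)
The plan is to show that any coupon without integral charge must be colored by the zero morphism of $\cat$; since $G_\cat$ is multilinear in coupon colors, this will immediately force $G_\cat(T_V) = 0$ for every cutting presentation $T_V$ of $T$. Let $v$ be such a coupon, with adjacent arrows labeled by $V_{\tilde i_1}, V_{\tilde i_2}, V_{\tilde i_3}$ and associated orientation signs $\epsilon_1, \epsilon_2, \epsilon_3 \in \{\pm 1\}$. First I use the pivotal structure of $\cat$, together with the identification $V_{\tilde i}^* \cong V_{-\tilde i}$ for $\tilde i \in I^\cat$, to rewrite the color of $v$ as an element of $\Hom_\cat(\unit, V_{\tilde \alpha_1} \otimes V_{\tilde \alpha_2} \otimes V_{\tilde \alpha_3})$ with $\tilde \alpha_k = \epsilon_k \tilde i_k$. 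The isomorphism $V_{\tilde i}^* \cong V_{-\tilde i}$ holds because the duality morphisms \eqref{E:DualityForCat} involve only $K$ (and not $H$), so the argument from the proof of Lemma \ref{L:w_iUsltH} transfers from $\catd$ to $\cat$.

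Next I perform a $K$-weight analysis. Every morphism in $\cat$ commutes with the action of $K$, so a nonzero element of the above Hom space must take values in the $K=1$ weight subspace of $V_{\tilde \alpha_1} \otimes V_{\tilde \alpha_2} \otimes V_{\tilde \alpha_3}$. Each typical module $V_{\tilde \alpha_k}$ has $K$-weights $\tilde \alpha_k + \widetilde{2 l_k}$ for $l_k \in \{-\m, \ldots, \m\}$, each occurring with multiplicity one. Consequently every $K$-weight of the triple tensor product has the form $\tilde \alpha_1 + \tilde \alpha_2 + \tilde \alpha_3 + \widetilde{2L}$ for some $L \in \Z$. The hypothesis that the coupon lacks integral charge is exactly the statement that $\tilde \alpha_1 + \tilde \alpha_2 + \tilde \alpha_3$ is not an even integer modulo $2{\ro}$, so $\widetilde{0}$ never appears among these weights; hence the weight-zero subspace is trivial, the Hom space vanishes, and the color of $v$ must be zero.

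The main (and only mild) obstacle is the bookkeeping in the pivotal rewriting step: one must verify, uniformly across all combinations of top/bottom placements and orientations of the three arrows at $v$, that the ambient Hom space of the coupon's color is canonically isomorphic to $\Hom_\cat(\unit, V_{\tilde \alpha_1} \otimes V_{\tilde \alpha_2} \otimes V_{\tilde \alpha_3})$ via pivotal bending. Once this identification is in place, the weight calculation runs uniformly over all cases and delivers the claim, with the final step being the elementary observation that a ribbon graph one of whose coupons carries the zero morphism is sent to $0$ by $G_\cat$.
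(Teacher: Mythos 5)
Your proof is correct and takes essentially the same route as the paper: the paper also reduces the claim to the vanishing of $\Hom_\cat(\unit, V_{\tilde \alpha_1}\otimes V_{\tilde \alpha_2}\otimes V_{\tilde \alpha_3})$ for the triple attached to the bad coupon, citing ``an argument similar to the one in the proof of Lemma \ref{L:CanIso}'' --- which is precisely your $K$-weight (character) computation. You simply make explicit the pivotal bending and the identification $V_{\tilde i}^*\cong V_{-\tilde i}$ that the paper leaves implicit.
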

\begin{proof}
  Let $(\tilde i, \tilde j, \tilde k)$ be the triple associated with a coupon
  of $T$ which does not have integral {\charge}.  An argument similar to the
  one in the proof of Lemma \ref{L:CanIso} shows that $\Hom_{\cat} (\unit,
  V_{\tilde i}\otimes V_{\tilde j}\otimes V_{\tilde k})=0$.  This forces
  $G_\cat\left(T_{V}\right)=0$ for any cutting presentation $T_{V}$ of $T$.
\end{proof}

\begin{lemma}\label{L:LiftsTotChargeZero}
  Let $T$ be a connected ${{I^\cat}}$-colored trivalent ribbon graph in $S^2$
  with no inputs and outputs and with integral {\charge s}.  There exists a
  $\catd$-colored trivalent ribbon graph $\check{T}$ such that
  $\varphi_{\Graph}(\check{T})= T$ if and only if the total {\charge} of $T$
  is zero.  If the total {\charge} of $T$ is non-zero, then
  $G_\cat\left(T_{V}\right)=0$ for any cutting presentation $T_{V}$ of $T$.
\end{lemma}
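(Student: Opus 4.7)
The plan is to reduce the lemma to a combinatorial lifting problem over $\Z$. A $\catd$-lift $\check T$ of $T$ amounts to selecting for every edge $e$ a representative $i_e\in\C$ of its class $\tilde i_e \in \C/2\ro\Z$ so that at every coupon the signed sum $\sum_k \varepsilon_k i_k$ of the adjacent edge labels equals the prescribed integral charge $n_v$; by Lemma~\ref{L:CanIso}, this is exactly the condition for the coupon to be a valid morphism in $\catd$. Thus existence of a lift is a linear problem in the edge labels modulo $2\ro$.

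For the ``only if'' direction I would sum the coupon constraints and swap orders of summation:
\[
\sum_v n_v \;=\; \sum_e i_e \Bigl(\sum_{v\ni e}\varepsilon_{v,e}\Bigr).
\]
For each edge $e$ the inner sum vanishes because its two endpoint contributions carry opposite signs dictated by the orientation of $e$ (and at a self-loop the two occurrences at the same vertex still cancel). Hence every lift forces total charge $=0$. For the ``if'' direction I would start from an arbitrary preliminary choice $i_e^0\equiv \tilde i_e\pmod{2\ro}$, write the resulting coupon charges as $c_v^0=n_v+2\ro k_v$ with $k_v\in\Z$, observe that the identity above together with the hypothesis forces $\sum_v k_v=0$, and then adjust: replacing $i_e$ by $i_e+2\ro$ shifts $(k_{v_1},k_{v_2})$ by $(\mp 1,\pm 1)$ at the two endpoints of $e$. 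Since $T$ is connected, such elementary moves span the sublattice $\{(k_v)\in\Z^V:\sum_v k_v=0\}$, so we can reach $k_v\equiv 0$ and obtain a genuine lift.

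For the final vanishing assertion I would lift $T_V$ rather than $T$. Cutting $e$ introduces two free ends whose $\catd$-labels $V_{j_{\text{in}}}$ and $V_{j_{\text{out}}}$ may a priori be \emph{distinct} lifts of $V_{\tilde i_e}$. The analogue of the double sum then reads $\sum_v n_v = \pm(j_{\text{out}}-j_{\text{in}})$, so the free ends absorb the discrepancy and a lift $\check{T_V}$ always exists. If the total charge of $T$ is nonzero, however, any such lift has $j_{\text{in}}\neq j_{\text{out}}$, so $V_{j_{\text{in}}}$ and $V_{j_{\text{out}}}$ are non-isomorphic simple objects of $\catd$ and $\Hom_\catd(V_{j_{\text{in}}},V_{j_{\text{out}}})=0$. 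Commutativity of diagram~\eqref{E:CommDiag} then gives $G_\cat(T_V)=\varphi\bigl(G_\catd(\check{T_V})\bigr)=0$ for any cutting presentation.

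The main obstacle I anticipate is the sign bookkeeping for the $\varepsilon_{v,e}$'s, particularly verifying that the edge-by-edge telescoping works uniformly at self-loops and at the cut edge, and that the elementary moves $i_e\mapsto i_e+2\ro$ really surject onto the zero-sum sublattice for a general connected trivalent graph. Once these verifications are carried out, the argument is a routine application of the commutative square~\eqref{E:CommDiag} together with simplicity of the typical modules.
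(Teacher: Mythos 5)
Your argument is correct and is essentially the paper's proof in different notation: your telescoping identity and ``elementary moves'' are exactly the paper's chain-level relation $\partial \tilde c = c_{\partial}+w$ together with the fact that, for a connected graph, the zero-sum $0$-chains are precisely the boundaries of $1$-chains with coefficients in $2\ro\Z$. Your treatment of the cutting presentation (the two free ends absorbing the total charge, producing a morphism between non-isomorphic simple $\catd$-modules, then pushing through diagram~\eqref{E:CommDiag}) likewise coincides with the paper's argument.
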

\begin{proof} Given a CW-complex space $X$ and an abelian group $G$, we denote
  by $C_n(X;G)$ the abelian group of cellular $n$-chains of $X$ with
  coefficients in $G$.

  Consider a connected ${{I^\cat}}$-colored trivalent ribbon graph $S$ with
  integral {\charge s} and possibly with inputs and outputs.  Let $|S|$ be the
  underlying 1-dimensional CW-complex of $S$ and $|\partial S| \subset |S|$ be
  the set of univalent vertices (corresponding to the inputs and outputs of
  $S$).  The coloring of $S$ determines a 1-chain $\tilde c\in
  C_1(|S|;\C/2{\ro}\Z)$.  Consider also the $0$-chain $$c_{\partial}=\sum_{x
    \in |\partial S|} C_x x\in C_0(|\partial S|;\C/2{\ro}\Z),$$ where $C_x\in
  \C/2\ro\Z$ is the label of the only edge of $S$ adjacent to $x$ if this edge
  is oriented towards $x$ and minus this label otherwise.  The {\charge}s of
  the coupons of~$S$ form a $0$-chain $w\in C_0(|S|\setminus |\partial
  S|;2\Z)$.  Clearly,
  \begin{equation}
    \label{E:dcoloring}
    \partial \tilde c= c_{\partial}+ w\, ({\text {mod}}\, 2\ro \Z).
  \end{equation}
  Suppose that the $0$-chain $c_{\partial}$ lifts to a certain $0$-chain $b\in
  C_0(|\partial S|;\C)$ such that $[b+w]=0\in H_0(|S|;\C)=\C$.  We claim that
  then the 1-chain $\tilde c$ lifts to a 1-chain $c\in C_1(|S|,\C)$ such that
  $\partial c= b+w$.  Indeed, pick any $c'\in C_1(|S|;\C)$ such that $\tilde
  c=c'\, ({\text {mod}}\, 2\ro \Z) $.  Set $x=\partial c' -b -w \in C_0(|S|;
  \C)$. By \eqref{E:dcoloring}, $x\in C_0(|S|;2{\ro}\Z)$.  Clearly,
  $[x]=-[b+w]=0\in H_0(|S|;2{\ro}\Z)$. Then $x=\partial\delta$ for some
  $\delta\in C_1(|S|;2{\ro}\Z)$ and $c=c'-\delta$ satisfies the required
  conditions.

  Let us now prove the first statement of the lemma. By assumption, $|\partial
  T| =\emptyset$. As above, the coloring of $T$ determines a 1-chain $\tilde
  c\in C_1(|T|;\C/2{\ro}\Z)$. Let $w\in C_0(|T| ;2\Z)$ be the $0$-chain formed
  by the {\charge}s of the coupons of $T$.  The total {\charge} of $T$ is
  $[w]\in H_0(|T|;2\Z)= 2\Z$. If $[w]=0 $, then the preceding argument (with
  $b=0$) shows that $\tilde c$ lifts to a 1-chain $c\in C_1(|T|;\C)$ such that
  $\partial c=w$.  The chain $c$ determines a coloring of all edges of $T$ by
  the modules $\{V_i\, \vert \, i\in \C\}$.  Clearly, the {\charge} of any
  coupon is equal to the corresponding value of $ w$. In particular, all these
  {\charge}s belong to the set $\{-2\ro', -2\ro'+2, \ldots, 2\ro'\}$. By Lemma
  \ref{L:CanIso}, this coloring of edges can be uniquely extended to a
  $\catd$-coloring $\check{T}$ of our graph such that
  $\varphi_{\Graph}(\check{T})= T$.  Conversely, if
  $\varphi_{\Graph}(\check{T})= T$ then with the same notation $w=\partial c$
  and thus the total {\charge} of $T$ is zero.

  Suppose now that $[w]\neq 0$.  Consider the cutting presentation $T_{V}$
  where $V=V_{\tilde\alpha}$ with $\tilde \alpha\in \C \setminus \Z \, ({\text
    {mod}}\, 2\ro \Z) $ is the color of an edge of $T$.  Here $|\partial
  T_{V}|$ is the two-point set formed by the extremities of $T_{V}$ and
  $c_{\partial}$ is the $0$-chain $\tilde\alpha \times \! {\text{ (the input
      vertex minus the output vertex)}}$. As $$[ w]=[\partial \tilde
  c]-[c_{\partial}]=0\in H_0(|T_{V}|;\C/2\ro \Z),$$ we have $[w]\in 2\ro \Z$.
  Pick $\alpha\in\C \setminus\Z$ such that $ \tilde\alpha =\alpha \, ({\text
    {mod}}\, 2\ro \Z) $.  Consider the $0$-chain
  $$b= \alpha \times \! {\text{ (the input vertex minus  the output
      vertex)}} - [w] \times \! {\text{ (the output vertex)}}.$$ Clearly,
  $[w+b]=0\in H_0(|T_{V}|;\C)$.  The argument at the beginning of the proof
  and Lemma~\ref{L:CanIso} imply that there is
  $\check{T}\in\Hom_{\Graph_\catd}(V_\alpha,V_{\alpha+[w]})$ such that $
  \varphi_{\Graph}(\check{T})= T_{V}$.  But then $G_\catd(\check{T})=0$
  because it is a morphism between non-isomorphic simple modules. Therefore
  $G_\cat(T_{V})= G_\cat(\varphi_{\Graph}(\check{T}))
  =\varphi(G_{\catd}(\check{T}))=0$.
\end{proof}

\subsection{Proof of Theorem \ref{rol}}
Let $T_{V_{\tilde i}}$ and $T_{V_{\tilde j}}$ be cutting presentations of a
connected ${{I^\cat}}$-colored trivalent ribbon graph $T$ in $S^2$ with no
inputs and outputs.  We claim that \begin{equation}\label{E:dUslt+} \qd(
  {\tilde i})<G_\cat(T_{V_{\tilde i}})> \, =\qd( {\tilde
    j})<G_\cat(T_{V_{\tilde j}})>.
\end{equation}
By the previous two lemmas it is enough to consider the case where all coupons
of $T$ have integral {\charge} and the total {\charge} of $T$ is zero.  By
Lemma \ref{L:LiftsTotChargeZero}, there is a $\catd$-colored trivalent ribbon
graph $\check{T}$ such that $\varphi_{\Graph}(\check{T})= T$. Cutting
$\check{T}$ at the same edges, we obtain $\catd$-colored graphs
$\check{T}_{V_{i}} $ and $\check{T}_{V_{j}} $ carried by $\varphi_{\Graph}$ to
$ T_{V_{\tilde i}}$ and $ T_{V_{\tilde j}}$, respectively. From
Diagram~\eqref{E:CommDiag} we have $<G_\catd(\check{T}_{V_i})> \, =\,
<G_\cat(T_{V_{\tilde i}})>$ and $<G_\catd({\check{T}}_{V_j})>\, = \,
<G_\cat(T_{V_{\tilde j}})>$.  Lemma~\ref{L:tambipair} implies that $\qd(
i)<G_\catd(\check{T}_{V_i})> \, =\qd( j)<G_\catd({\check{T}}_{V_j})>$.  Since
$\qd( { i})=\qd( {\tilde i})$ and $\qd( {j})=\qd( {\tilde j})$, these formulas
imply Formula~\eqref{E:dUslt+}.

\begin{remark} The category
$\cat$ with  basic data $\{V_{\tilde i}, w_{\tilde i}\}_{\tilde i\in
I}$
and t-ambi pair $ ( I^\cat=(\C\setminus\Z)/2\ro\Z,\qd )$ determines modified
$6j$-symbols $\sjtop {\tilde i}{\tilde j}{\tilde k}{\tilde l}{\tilde m}{\tilde
  n}$ for ${\tilde i},{\tilde j},{\tilde k},{\tilde l},{\tilde m},{\tilde n}
\in I^\cat$.  Using Lemma \ref{L:CanIso}, one observes that as in Section
\ref{SS:UsltH}, the $6j$-symbols of this section can be viewed as taking
values in $\C$. The values of these $6j$-symbols are essentially the same as
the values of the $6j$-symbols derived from the category $\catd$.  More
precisely, let $\tilde i,\tilde j,\tilde k,\tilde l,\tilde m,\tilde n\in
I^\cat$. If the total {\charge} of $\Gamma(\tilde i,\tilde j,\tilde k,\tilde
l,\tilde m,\tilde n)$ is non-zero, then $\sjtop {\tilde i}{\tilde j}{\tilde
  k}{\tilde l}{\tilde m}{\tilde n}=0$.  If the total {\charge} of
$\Gamma(\tilde i,\tilde j,\tilde k,\tilde l,\tilde m,\tilde n)$ is zero, then
for some lifts $i,j,k,l,m,n\in\C\setminus\Z$ of $\tilde i,\tilde j,\tilde
k,\tilde l,\tilde m,\tilde n$,
  $$\sjtop {\tilde
    i}{\tilde j}{\tilde k}{\tilde l}{\tilde m}{\tilde n} =\sjtop
  ijklmn_\catd.$$
Example \ref{Ex:compute} computes these $6j$-symboles for $\ro=3$
and
  $q=\e^{i\pi/3}$.
\end{remark}


\section{Three-manifold invariants}\label{S:3Man}

In this section we derive a topological invariant of links in closed
orientable $3$-manifolds from a suitable pivotal tensor Ab-category.  We also
show that Sections \ref{SS:QSLA} and \ref{SS:Uslt6j} yield examples of such
categories.

\subsection{Topological preliminaries} Let $M$ be a closed orientable
$3$-manifold and $L$ a link in $M$. Following \cite{BB}, we use the term
\emph{quasi-regular triangulation} of $M$ for a decomposition of $M$ as a
union of embedded tetrahedra such that the intersection of any two tetrahedra
is a union (possibly, empty) of several of their vertices, edges, and
(2-dimensional) faces.  Quasi-regular triangulations differ from usual
triangulations in that they may have tetrahedra meeting along several
vertices, edges, and faces. Nevertheless, the edges of a quasi-regular
triangulation have distinct ends. A \emph{Hamiltonian link} in a quasi-regular
triangulation $\T$ is a set $\LL$ of unoriented edges of $\T$ such that every
vertex of $\T$ belongs to exactly two edges of $\LL$.  Then the union of the
edges of $\T$ belonging to $\LL$ is a link $L$ in $M$. We call the pair
$(\T,\LL)$ an \emph{$H$-triangulation} of $(M,L)$.

\begin{proposition}[\cite{BB}, Proposition 4.20]\label{L:Toplemma-}
  Any pair (a closed connected orientable $3$-manifold $M$, a non-empty link
  $L\subset M$) admits an $H$-triangulation.
\end{proposition}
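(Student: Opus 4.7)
The plan is to start from an arbitrary PL triangulation of the pair $(M,L)$ and then absorb, one by one, every vertex that does not lie on $L$ by a local finger-move that isotopes the link through that vertex.

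First, by classical piecewise-linear topology there is a simplicial triangulation $\T_0$ of $M$ in which $L$ appears as a union of edges. Any simplicial triangulation is automatically quasi-regular (its edges have distinct ends), and the set $\LL_0$ of edges of $\T_0$ contained in $L$ meets every vertex of $L$ in exactly two edges. The only defect is that $\T_0$ typically has vertices not belonging to $L$; the remainder of the argument is dedicated to removing this defect while preserving the isotopy class of $L$.

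Second, given any exterior vertex $v\notin L$, suppose that $\T$ contains a $2$-face $[a,v,b]$ whose edge $[a,b]$ lies in $\LL$. Then the modification $\LL\mapsto(\LL\setminus\{[a,b]\})\cup\{[a,v],[v,b]\}$ replaces the straight arc $[a,b]\subset L$ by the broken arc $[a,v]\cup[v,b]$; this is realized by an ambient isotopy of $M$ supported in a neighborhood of the triangle $[a,v,b]$, so the underlying link is unchanged up to isotopy. After the move, $a$ and $b$ still have valency two in $\LL$, while $v$ has acquired valency two; every other vertex is unaffected. To make such a triangle available, I would preliminarily refine $\T_0$ by stellar subdivisions: using the connectedness of the $1$-skeleton (which follows from that of $M$), each exterior $v$ is joined by an edge to some vertex $a\in L$, and one subdivides the tetrahedra around $v$, $a$, and a neighbor $b$ of $a$ in $\LL$ until the $2$-simplex $[a,v,b]$ is present. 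Applying the detour move at every exterior vertex then produces the desired $H$-triangulation.

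The main obstacle is organizing the refinement so that the subdivisions needed to create these base-triangles do not themselves introduce new exterior vertices whose absorption demands further subdivision, and so that simultaneous detours at different vertices do not interfere. A clean way to handle this is to perform a single sufficiently fine barycentric-type subdivision of $\T_0$ at the outset, chosen so that every vertex off $L$ is automatically the apex of a triangle based on some edge of $L$, and then carry out one pass of detour moves at distinct base-edges for distinct exterior vertices. Quasi-regularity is preserved throughout because all operations are local simplicial ones, and the process terminates because $\T_0$ has finitely many vertices and each detour strictly decreases the number of vertices lying off the link.
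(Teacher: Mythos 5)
Note first that the paper offers no proof of this proposition: it is quoted verbatim from \cite{BB} (Proposition 4.20), so your argument can only be judged on its own terms. Your local ``detour'' move is sound and is indeed the natural elementary step: if $[a,v,b]$ is a $2$-simplex with $[a,b]\in\LL$ and $v$ off the link, then replacing $[a,b]$ by $\{[a,v],[v,b]\}$ keeps every link vertex on exactly two edges of $\LL$, gives $v$ valency two, preserves quasi-regularity, and changes $L$ only by an ambient isotopy across the embedded triangle, whose interior is disjoint from the $1$-skeleton and hence from the rest of $L$. Your bookkeeping that distinct moves over distinct base edges of the original $\LL$ do not interfere is also fine.

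The genuine gap is in guaranteeing that every exterior vertex admits such a triangle, and the two mechanisms you offer for this are both false. First, stellar subdivision cannot ``create'' the $2$-simplex $[a,v,b]$: no subdivision ever introduces a simplex spanned entirely by pre-existing vertices, so if $v$, $a$, $b$ do not already span a face of the triangulation, subdividing will never make them do so. Second, the claim that a sufficiently fine barycentric-type subdivision makes every vertex off $L$ the apex of a triangle based on an edge of $L$ is backwards: only vertices lying in the closed star of $L$ can be such apexes, and refining the triangulation manufactures many new vertices whose stars are disjoint from $L$, so subdivision strictly worsens the situation. Consequently the single-pass scheme at the end cannot run. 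What is actually required is an iterative argument in which base edges are drawn from the \emph{current} link (which grows after each detour), together with a proof that this process exhausts all vertices; note that connectivity of the $1$-skeleton is not enough, since an exterior vertex joined by an edge to a link vertex $a$ need not span a $2$-simplex with either of the two current $\LL$-edges at $a$, and the greedy procedure can a priori get stuck with the link ``insulated'' by already-absorbed vertices. Supplying this global combinatorial step (possibly in combination with bubble and Pachner moves) is precisely the content of Proposition 4.20 of \cite{BB}, and it is the part missing from your proposal.
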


\subsection{Algebraic preliminaries} \label{SS:ModifiedTV} Let $\cat$ be a
pivotal tensor Ab-catego\-ry with ground ring $K$, basic data $\{V_i, w_i:V_i
\to V_{i^*}^*\}_{i \in I}$, and t-ambi pair $(I_0,\qd)$. 
As in Section \ref{S:RibbonGraphInv}, we assume that the ground ring $K$ of
$\cat$ is a field.
To define the associated 3-manifold invariant we need the following
requirements on~$\cat$.  Fix an abelian group $\Gr$.  Suppose that $\cat$ is
\emph{$\Gr$-graded} in the sense that for all $g\in\Gr$, we have a class
$\cat_g$ of object of $\cat$ such that
\begin{enumerate}
\item $\unit\in\cat_0$,
\item  if $ V\in\cat_g$, then $V^* \in\cat_{-g}$,
\item if  $ V\in\cat_g,\, V'\in\cat_{g'}$, then  $V\otimes V'\in\cat_{g+g'}$,
\item if $ V\in\cat_g,\, V'\in\cat_{g'}$, and $g\neq g'$, then
  $\Hom_\cat(V,V')=\{0\}$.
\end{enumerate}

For any $g\in G$, set $$I^g=\{i\in I \, \vert\, V_i\in \cat_g\}.$$
We shall assume that  $\Gr$ contains a set $\Gs$ with the following
properties:
\begin{enumerate}
\item $\Gs$ is symmetric: $-\Gs=\Gs$,
\item $\Gr$ can not be covered by a finite number of translated copies of
  $\Gs$, in other words, for any $ g_1,\ldots ,g_n\in \Gr$, we have
  $\bigcup_{i=1}^n (g_i+\Gs) \neq\Gr $,
\item if $g\in\Gr\setminus\Gs$, then the set $I^g$ is a finite subset of
  $I_0$, $\qd(I^g)\subset K^*$, and every object of $\cat_g$ is isomorphic to
  a direct sum of a finite family of objects $\{V_i\, \vert \, i\in I^g\}$.
\end{enumerate}

Note that the last condition and the definition of a basic data
imply that for $g\in G \setminus X$, every simple object of $\cat_g$
 is isomorphic to $V_i$ for a unique $i\in I^g$.

Finally, we  assume to have  a map $\bb:I_0\to K$  such that
\begin{enumerate}
\item  $\bb(i)=\bb(i^*)$, for all $i\in I_0$,
\item  for any $g,g_1,g_2\in\Gr\setminus\Gs$ with $g+g_1+g_2=0$ and for all
  $j\in I^g$,
  $$\bb(j)=\sum_{j_1\in I^{g_1},\, j_2\in I^{g_2}}
  \bb({j_1})\bb({j_2})\dim(H(j,j_1,j_2)).$$
\end{enumerate}
Denoting by $\bb_g$ the formal sum $\bb_g=\sum_{j\in I^g} \bb(j)
V_j$, one obtains $\bb_{g_1}\otimes\bb_{g_2}=\bb_{g_1+g_2}$ whenever
$g_1,g_2,g_1+g_2\in\Gr\setminus\Gs$.  The map $g\mapsto \bb_g$ can
be seen as a ``representation'' of $\Gr \setminus \Gs$ in the
Grothendieck ring of $\cat$.

\subsection{A state sum invariant}\label{TTVV} 
We start from the algebraic data described in the previous subsection and
produce a topological invariant of a triple $(M,L,h)$, where $M$ is a closed
connected oriented 3-manifold, $L\subset M$ is a non-empty link, and $h\in
H^1(M,G)$.

Let $(\T,\LL)$ be an $H$-triangulation of $(M,L)$.  By a \emph{$\Gr$-coloring}
of $\T$, we mean a $\Gr$-valued $1$-cocycle $\wp$ on $\T$, that is a map from
the set of oriented edges of $\T$ to $\Gr$ such that
\begin{enumerate}
\item the sum of the values of $\wp$ on the oriented edges forming the
  boundary of any face of $\T$ is zero and
\item $\wp(-e)=-\wp(e)$ for any oriented edge $e$ of $\T$, where $-e$ is $e$
  with opposite orientation.
\end{enumerate}
Each $\Gr$-coloring $\wp$ of $\T$ represents a cohomology class $[\wp]\in
H^1(M,G)$.

A {\it state} of a $\Gr$-coloring $\wp$ is a map $\p$ assigning to every
oriented edge $e$ of $\T$ an element $\p (e)$ of $I^{\wp(e)}$ such that
$\p(-e)=\p (e)^*$ for all $e$.  The set of all states of $\wp$ is denoted
$\states(\wp)$. The identities $\qd( {{\p(e)}})=\qd( {{\p(-e)}})$ and $\bb(
{{\p(e)}})=\bb( {{\p(-e)}})$ allow us to use the notation $\qd(\p(e))$ and
$\bb(\p(e))$ for non-oriented edges.

We call a $\Gr$-coloring of $(\T,\LL)$ \emph{admissible} if it takes values in
$\Gr\setminus \Gs$. Given an admissible $\Gr$-coloring $\wp$ of $(\T,\LL)$, we
define a certain partition function (state sum) as follows.  For each
tetrahedron $T$ of $\T$, we choose its vertices $v_1$, $v_2$, $v_3$, $v_4$ so
that the (ordered) triple of oriented edges
$(\vect{v_1v_2},\vect{v_1v_3},\vect{v_1v_4})$ is positively oriented with
respect to the orientation of $M$. Here by $\vect{v_1v_2}$ we mean the edge
oriented from $v_1$ to $v_2$, etc. For each $\p\in \states(\wp)$, set
$$
|T|_\p=\sjtop ijklmn
\text{ where }\left\{
\begin{array}{lll}
  i=\p(\vect{v_2v_1}),& j=\p(\vect{v_3v_2}),&
  k=\p(\vect{v_3v_1}),\\
  l=\p(\vect{v_4v_3}),&
  m=\p(\vect{v_4v_1}),& n=\p(\vect{v_4v_2}).
\end{array}\right.
$$
This $6j$-symbol belongs to the tensor product of $4$ multiplicity modules
associated to the faces of $T$ and does not depend on the choice of the
numeration of the vertices of $T$ compatible with the orientation of $M$. This
follows from the tetrahedral symmetry of the (modified) $6j$-symbol discussed
in Section \ref{S:Modified6j}.  Note that any face of $\T$ belongs to exactly
two tetrahedra of $\T$, and the associated multiplicity modules are dual to
each other. The tensor product of the $6j$-symbols $|T|_\p$ associated to all
tetrahedra $T$ of $\T$ can be contracted using this duality.  We denote by
$\cntr$ the tensor product of all these contractions.  Let $\T_1$ be the set
of unoriented edges $\T$ and let $\T_3$ the set of tetrahedra of $\T$. Set
$$
TV(\T,\LL,\wp)=\sum_{\p\in \states ( \wp)}\,\, \prod_{e\in\T_1{\setminus \LL}}
\qd(\p(e))\, \prod_{e\in \LL} \bb(\p(e))\, \, \cntr \left
  (\bigotimes_{T\in\T_3}|T|_{\p}\right ) \in K.
$$

\begin{theorem}\label{inveee} $
  TV(\T,\LL,\wp)$ depends only on the isotopy class of $L$ in $M$ and the
  cohomology class $[\wp]\in H^1(M,G)$. It does not depend on the choice of
  the $H$-triangulation of $(M,L)$ and on the choice of $\wp$ in its
  cohomology class.
\end{theorem}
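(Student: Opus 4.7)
The plan is to reduce the theorem to verifying invariance under a finite set of local moves on $H$-triangulations. First, I would check that the state sum is well-posed: admissibility of $\wp$ means every oriented edge $e$ takes a color in $I^{\wp(e)}$, which by assumption (3) in Section \ref{SS:ModifiedTV} is a finite subset of $I_0$ on which $\qd$ takes invertible values; the contraction $\cntr$ makes sense because each 2-face of $\T$ bounds exactly two tetrahedra, contributing dual multiplicity modules. The tetrahedral symmetry of the modified $6j$-symbol (Section \ref{S:Modified6j}) shows that $|T|_\p$ depends only on $T$ as an oriented tetrahedron, not on the chosen numeration of its vertices.

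Next, I would invoke the fact (cf.\ \cite{BB}) that any two $H$-triangulations of $(M,L)$ can be connected by a finite sequence of \Pachner{} moves, \bubble{} moves, and \lune{} moves, together with changes of the cocycle within its cohomology class. For the \Pachner{} move (a $2$-$3$ move performed in the complement of $\LL$), the equality of the two state sums reduces to the Biedenharn--Elliott identity of Theorem \ref{T:BEId}: the summation index $j$ labels the newly created interior edge, the factor $\qd(j)$ matches the $\qd$-weight of that edge in the $3$-tetrahedra configuration, and the contractions along the four interior faces realize the $*$-operations in \eqref{E:BEId}. For the \bubble{} and \lune{} moves, which create or destroy a pair of tetrahedra and possibly affect $\LL$, the cancellation is governed by the orthonormality relation of Theorem \ref{T:orth}, together with the defining property of $\bb$ from Section \ref{SS:ModifiedTV}: the sum $\sum_{j_1,j_2} \bb(j_1)\bb(j_2)\dim H(j,j_1,j_2)=\bb(j)$ is precisely the algebraic content of the bubble move splitting a single edge of $\LL$ into two, while the $\qd$-factor arising from the inserted non-link edge is absorbed by the prefactor $\qd(k)$ on the right-hand side of Theorem \ref{T:orth}.

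Finally, to prove invariance under change of cocycle representative, I would fix an $H$-triangulation $(\T,\LL)$ and show that for any $0$-cochain $f:\T_0\to\Gr$, replacing $\wp$ with $\wp+\delta f$ leaves the sum unchanged. This reduces to a local identity at each vertex, obtained by redistributing factors among the edges incident to that vertex using the grading property (4) of Section \ref{SS:ModifiedTV} and the decomposition of each object of $\cat_g$ into a sum of $V_i$ with $i\in I^g$ (condition (3)). Together with the move invariance, this yields the claimed dependence only on $(M,L)$ and $[\wp]\in H^1(M,\Gr)$. The main obstacle I anticipate is the careful bookkeeping of the $\bb$-weights under \bubble{} and \lune{} moves that modify $\LL$: the proof must ensure that every such move is covered by the list in Section \ref{SS:ModifiedTV}, and that the admissibility constraint $\wp(\T_1)\subset\Gr\setminus\Gs$ is preserved throughout (using condition (2) on $\Gs$ to find an admissible representative of $[\wp]$ on any given triangulation). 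The intermediate moves that would require a color in $\Gs$ can be avoided thanks to the non-covering property of $\Gs$.
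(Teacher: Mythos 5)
Your reduction to local moves matches the paper's strategy: Proposition \ref{L:Toplemma} supplies the moves, the \Pachner{} move is handled by the Biedenharn--Elliott identity, the \lune{} move by orthonormality, and the \bubble{} move by orthonormality combined with the defining property of $\bb$ (this combination is isolated in the paper as Lemma \ref{L:algBubble}, and your accounting of the $\qd(k)$ prefactor and the splitting of a link edge into two is correct). Your use of the non-covering property of $\Gs$ to find admissible representatives is also exactly Lemma \ref{eee+}.

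The genuine gap is in your treatment of invariance under $\wp\mapsto\wp+\delta f$. You assert this ``reduces to a local identity at each vertex, obtained by redistributing factors among the edges incident to that vertex,'' but no such algebraic identity is available. Adding $\delta c$ with $c$ supported at a vertex $v_0$ changes the degree $\wp(e)$ of every edge $e$ incident to $v_0$, hence replaces the index set $I^{\wp(e)}$ by the entirely different set $I^{\wp(e)+c(v_0)}$; the states of the two colorings are not in any natural bijection near $v_0$, and the $6j$-symbols of all tetrahedra containing $v_0$ change. There is nothing to ``redistribute.'' The paper's proof of this step (Lemma \ref{L:Coboundary}) is genuinely geometric: it passes to the dual skeleton, inserts a disk colored by $c(v_0)$ via a $b$-move at one of the two points where $L$ meets the boundary sphere of the ball dual to $v_0$, and sweeps that disk's boundary circle across the sphere to the other point by a sequence of dual \Pachner{} and \lune{} moves, checking at each stage that the intermediate colorings (which equal $\wp+\delta c$ on the swept region and $\wp$ on the rest) remain admissible; removing the disk at the end yields the coloring $\wp+\delta c$ on the original skeleton. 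In other words, coboundary invariance is itself a consequence of move invariance applied to a carefully chosen sequence of moves, not an independent local computation. Without this (or an equivalent argument), your proof of the dependence only on $[\wp]$ is incomplete; note also that this step is needed even to compare the colorings obtained by restriction under a negative move with an arbitrarily given admissible representative on the smaller triangulation, as in Theorem \ref{T:one-move}.
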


A proof of this theorem will be given in the next section.

\begin{lemma}\label{eee+}
  Any $h\in H^1(M,G)$ can be represented by an admissible $G$-coloring on an
  arbitrary quasi-regular triangulation $\T$ of $M$.
\end{lemma}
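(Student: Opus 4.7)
The plan is to start with an arbitrary $G$-valued $1$-cocycle $\wp_0$ on $\T$ representing $h$, and then modify it by a coboundary $\delta\lambda$ so that the resulting cocycle $\wp=\wp_0+\delta\lambda$ avoids the set $\Gs$ on every oriented edge. Since cohomologous cocycles represent the same class in $H^1(M,G)$, this will prove the lemma. The symmetry $\Gs=-\Gs$ ensures that the admissibility condition $\wp(e)\notin\Gs$ is independent of the orientation of $e$, so it suffices to check it on one orientation of each edge.

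I would construct $\lambda:\T_0\to G$ by induction on an enumeration $v_0,v_1,\ldots,v_N$ of the vertices of $\T$. Set $\lambda(v_0)=0$. Suppose $\lambda(v_0),\ldots,\lambda(v_{i-1})$ have been chosen; let $E_i$ be the (finite) set of edges of $\T$ joining $v_i$ to some $v_j$ with $j<i$ (recall that edges of a quasi-regular triangulation have distinct endpoints). For each such edge $e$, oriented from $v_i$ to $v_j$, the new value $\wp(e)=\wp_0(e)+\lambda(v_j)-\lambda(v_i)$ must avoid $\Gs$, which forbids $\lambda(v_i)$ from lying in the translate $\lambda(v_j)+\wp_0(e)+\Gs$ (using $\Gs=-\Gs$); an edge oriented the other way yields an analogous single translate of $\Gs$. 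Hence $\lambda(v_i)$ must avoid a finite union of translates of $\Gs$, one per element of $E_i$.

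By property (2) of the set $\Gs$, the group $G$ cannot be covered by finitely many translates of $\Gs$, so such a choice of $\lambda(v_i)$ exists. After all vertices have been processed, every oriented edge $e$ of $\T$ connects $v_i$ to $v_j$ for some $i<j$, and the admissibility of $\wp$ on $e$ was ensured at step $j$. Therefore $\wp=\wp_0+\delta\lambda$ is an admissible $G$-coloring of $\T$ representing $h$.

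The only real obstacle is verifying that at each inductive step only \emph{finitely many} translates of $\Gs$ are forbidden; this is automatic from the finiteness of $\T$, so that property (2) of $\Gs$ applies. Everything else is bookkeeping with cochains and the symmetry $-\Gs=\Gs$.
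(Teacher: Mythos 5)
Your proof is correct and is essentially the paper's argument: both modify a representative cocycle by a coboundary, using property (2) of $\Gs$ (together with the quasi-regularity of $\T$, which guarantees edges are not loops) to choose each vertex value outside a finite union of translates of $\Gs$. The only difference is bookkeeping: you make a single ordered pass over all vertices, freezing each edge at the step of its later endpoint, whereas the paper repeatedly corrects one ``bad'' vertex at a time and observes that the count of bad vertices strictly decreases.
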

\begin{proof}
  Take any $G$-coloring $\Phi$ of $\T$ representing $h$. We say that a vertex
  $v$ of $\T$ is {\it bad} for $\Phi$ if there is an oriented edge $e$ in $\T$
  outgoing from $v$ such that $\Phi(e)\in X$.  It is clear that $\Phi$ is
  admissible if and only if $\Phi$ has no bad vertices. We show how to modify
  $\Phi$ in its cohomology class to reduce the number of bad vertices.  Let $v
  $ be a bad vertex for $\Phi$ and let $E_v$ be the set of all oriented edges
  of $\T$ outgoing from $v$.  Pick any $$g\in G\setminus\left( \bigcup_{e\in
      E_v}(\Phi(e)+X)\right).$$ Let $c $ be the $G$-valued 0-cochain on $\T$
  assigning $g$ to $v $ and $0$ to all other vertices.  The 1-cocycle
  $\wp+\delta c$ takes values in $G\setminus X$ on all edges of $\T$ incident
  to $v$ and takes the same values as $\wp$ on all edges of $\T$ not incident
  to $v$. Here we use the fact that the edges of $\T$ are not loops which
  follows from the quasi-regularity of $\T$.  The transformation $\wp \mapsto
  \wp+\delta c$ decreases the number of bad vertices.  Repeating this
  argument, we find a 1-cocycle without bad vertices.
\end{proof}

We represent any $h\in H^1(M,G)$ by an admissible $G$-coloring $\wp$ of $\T$
and set
$$TV(M,L,h)=TV(\T,\LL,\wp)\in K.$$
By Theorem \ref{inveee}, $TV(M,L,h)$ is a topological invariant of the triple
$(M,L,h)$.

\subsection{Example}\label{SS:ExTVUslt} 
Set $\Gr=\C/2\Z$ and let $\cat$ be the pivotal tensor Ab-category with basic
data and t-ambi pair defined in Section \ref{SS:Uslt6j}. This category is
$\Gr$-graded: for $\wb\alpha\in\Gr$, the class $\cat_{\wb\alpha}$ consists of
the modules on which the central element $K^{\ro}\in\Uslt$ act as the scalar
$q^{{\ro}\wb\alpha}$. Set
$$\Gs =\Z/2\Z\subset\C/2\Z=\Gr.$$ It is easy to see that
$\Gr, \Gs$, and the constant function $\bb= {{\ro}^{-2}}$ satisfy the
requirements of Section \ref{SS:ModifiedTV} (cf.\ \eqref{E:TensorProdDe}).
The constructions above derive from this data a state-sum topological
invariant of links in $3$-manifolds.

The original Turaev-Viro-type state sum construction of 3-manifold invariants
can not be applied to $\cat$ because $\cat$ contains infinitely many
isomorphism classes of simple objects and is not semi-simple. Moreover, the
standard quantum dimensions of simple objects in $\cat$ are generically zero
and hence the standard $6j$-symbols associated with $\cat$ are generically
equal to zero.

\section{Proof of Theorem \ref{inveee}}\label{S:proofoftheoremeee}

Throughout this section, we keep notation of Theorem \ref{inveee}. We begin by
explaining that any two $H$-triangulations of $(M,L)$ can be related by
elementary moves adding or removing vertices, edges, etc.  We call an
elementary move positive if it adds edges and negative if it removes edges.

The first type of elementary moves are the so-called {\bubble} moves.  The
{\it positive {\bubble} move} starts with a choice of a face $F=v_1v_2v_3$ of
$\T$ such that at least one of its edges, say $v_2v_3$, is in $\LL$. Consider
two tetrahedra of $\T$ meeting along~$F$. We unglue these tetrahedra along $F$
and insert a $3$-ball between the resulting two copies of $F$. We triangulate
this $3$-ball by adding a vertex $v$ at its center and three edges $vv_1$,
$vv_2$, $vv_3$.  The edge $v_2v_3$ is removed from $\LL$ and replaced by the
edges $\{vv_2, vv_3\}$.
\begin{figure}[b]
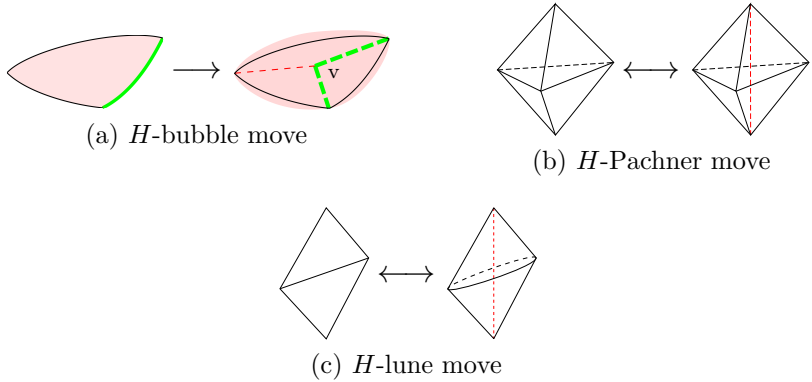

  \centering \subfloat[{\bubble} move]{\label{F:bubble} \hspace{10pt} $
    \epsh{bubblegl}{30pt}\longrightarrow \epsh{bubblebl}{30pt}
    \put(-23,0){\tiny v} $ \hspace{10pt} } \subfloat[{\Pachner}
  move]{\label{F:pachner} \hspace{10pt} $
    \epsh{tetra2}{50pt}\longleftrightarrow\epsh{tetra3}{50pt} $ \hspace{10pt}
  }\\
  \subfloat[{\lune} move]{\label{F:lune} \hspace{10pt}
    $\epsh{tetraL0}{50pt}\longleftrightarrow\epsh{tetraL2}{50pt} $
    \hspace{10pt} }
  \caption{Elementary moves}
  \label{F:moves}
\end{figure}
This move can be visualized as in the transformation of Figure~\ref{F:bubble}
(where the bold (green) edges belong to $\LL$).  The inverse move is the {\it
  negative {\bubble} move}.

The second type of elementary moves is the {\it {\Pachner} $2\leftrightarrow
  3$ moves} shown in Figure~\ref{F:pachner}.  It is understood that the newly
added edge on the right is not an element of $\LL$. The negative {\Pachner}
move is allowed only when the edge common to the three tetrahedra on the right
is not in $\LL$.

\begin{proposition}[\cite{BB}, Proposition 4.23]\label{L:Toplemma}
  Let $L$ be a non-empty link in a closed connected orientable $3$-manifold
  $M$.  Any two $H$-triangulations of $(M,L)$ can be related by a finite
  sequence of {\bubble} moves and {\Pachner} moves in the class of
  $H$-triangulations of $(M,L)$.
\end{proposition}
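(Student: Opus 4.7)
The plan is to derive Proposition~\ref{L:Toplemma} from the classical Pachner theorem for PL $3$-manifolds, with additional bookkeeping to preserve the Hamiltonian link structure. I would proceed in three steps.

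First, I would reduce to the case where $\T_1$ and $\T_2$ realize $L$ with the same simplicial decomposition. Starting from two arbitrary $H$-triangulations $(\T_1,\LL_1)$ and $(\T_2,\LL_2)$ of the same pair $(M,L)$, repeated application of positive \bubble\ moves on edges of $\LL_1$ and $\LL_2$ introduces new mid-vertices that subdivide link edges. Interleaving these subdivisions with \Pachner\ $2$-$3$ moves (which realize small ambient isotopies of the triangulation away from $L$), one can arrange $\LL_1$ and $\LL_2$ to induce the same combinatorial cycle structure on $L$, so that the two triangulations contain $L$ as a shared simplicial subcomplex with coinciding Hamiltonian edge sets.

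Second, once $\T_1$ and $\T_2$ share a common subcomplex $L$ with matching distinguished edge set, I would invoke a relative form of Pachner's theorem: two PL triangulations of a compact $3$-manifold agreeing on a subcomplex can be connected by a finite sequence of $2$-$3$ and $1$-$4$ Pachner moves leaving that subcomplex fixed. The $2$-$3$ moves are exactly our \Pachner\ moves. Each $1$-$4$ move at a vertex of $L$ is realized in our combinatorial framework by a positive \bubble\ move at a link face (which inserts the new vertex together with three edges to $v_1,v_2,v_3$) followed by a short sequence of \Pachner\ $2$-$3$ moves rearranging the local triangulation while keeping $L$ intact.

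Third, I would verify that every elementary move in the resulting sequence preserves both quasi-regularity (no edge becomes a loop, since inserted vertices lie in the interior of a small $3$-ball and are thus distinct from all existing vertices) and the Hamiltonian property. The \Pachner\ $2$-$3$ move does not interact with $\LL$ by hypothesis. The positive \bubble\ move at a face $F=v_1v_2v_3$ containing the link edge $v_2v_3$ removes $v_2v_3$ from $\LL$ and inserts the two edges $vv_2,vv_3$: the new interior vertex $v$ lies on exactly two $\LL$-edges, and the valences at $v_2,v_3$ in $\LL$ are unchanged, so the Hamiltonian condition is preserved.

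The principal obstacle is the first step: aligning two arbitrary simplicial decompositions of the same embedded loop $L$ using only the moves at our disposal. One cannot appeal to barycentric subdivision (which is not among the allowed moves), and arbitrary isotopies of $L$ in $M$ must be implemented through sequences of local combinatorial modifications without ever destroying the Hamiltonian property or quasi-regularity. Building the required common refinement by a careful interleaving of \bubble\ moves along link edges with ambient \Pachner\ moves is precisely the combinatorial argument carried out in \cite{BB}, Proposition~4.23, whose strategy this proposal follows.
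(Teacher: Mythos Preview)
The paper does not prove this proposition at all; it is simply quoted from \cite{BB} (their Proposition~4.23) and used as a black box. There is therefore no proof in the paper to compare against.

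Your proposal is a plausible outline, but as you yourself concede in the final paragraph, it is not a proof: the ``principal obstacle'' (Step~1) is deferred back to \cite{BB}, which is precisely what the paper does. Beyond that, several steps would need substantial work. The relative Pachner theorem you invoke in Step~2 is not standard in the form you need (bistellar moves fixing a prescribed $1$-subcomplex while staying in the class of quasi-regular triangulations). Your realization of a $1$--$4$ move by a bubble move followed by $2$--$3$ moves is not obviously correct, since a $1$--$4$ move inserts a vertex in the interior of a tetrahedron while the bubble move inserts one between two tetrahedra along a shared face; the local combinatorics differ and you would have to exhibit the sequence explicitly. And in Step~3, quasi-regularity is not automatically preserved by a positive $2$--$3$ move: the new edge joins the two apex vertices of the original pair of tetrahedra, and nothing in the definition of a quasi-regular triangulation forbids these apices from coinciding, which would produce a loop edge.
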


We will need one more type of moves on $H$-triangulations of $(M,L)$ called
the {\it {\lune} moves}. It is represented in Figure \ref{F:lune} where for
the negative {\lune} move we require that the disappearing edge is not
in~$\LL$.  The {\lune} move may be expanded as a composition of {\bubble}
moves and {\Pachner} moves (see \cite{BB}, Section 2.1), but it will be
convenient for us to use the {\lune} moves directly.

The following   lemma is an algebraic analog of the {\bubble} move.

\begin{lemma}\label{L:algBubble}
  Let $g_1,g_2,g_3,g_4,g_5,g_6\in\Gr\setminus\Gs$ with $g_3=g_1+g_2$,
  $g_6=g_2+g_4$ and $g_5=g_1+g_6$.  If $ i\in I^{g_1}$, $j \in I^{g_2}$, $k\in
  I^{g_3}$, then
  \begin{align}\label{E:algLbubble}
    \qd(k) \sum_{\tiny \left.\begin{array}{c} l \in I^{g_4}, m\in I^{g_5} ,
          n\in I^{g_6}\\ \end{array}\right.}\hspace{-4ex} \qd(n) \bb(l)&\bb(m)
    *_{klm^*}*_{inm^*}*_{jln^*} \left( \sjtop
      ijklmn \otimes \sjtop k{j^*}inml \right) \notag \\
    &=\bb(k)\Id(i,j,k^*)
  \end{align}
\end{lemma}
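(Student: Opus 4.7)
The plan is to derive \eqref{E:algLbubble} as a direct consequence of the orthonormality relation (Theorem~\ref{T:orth}) applied with $p=k$, followed by an application of the bubble axiom for $\bb$ to absorb the remaining summation over $l$ and $m$.

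First I will check that Theorem~\ref{T:orth} is applicable. Condition (3) on $\Gs\subset\Gr$ gives $I^{g_s}\subset I_0$ with $\qd$ invertible on each such set, and it ensures that whenever two chosen degrees sum to an element of $\Gr\setminus\Gs$ the corresponding pair of indices is good. Since the sums $g_1+g_2=g_3$, $g_2+g_4=g_6$, $g_3+g_4=g_5$, and $g_1+g_6=g_5$ all lie in $\Gr\setminus\Gs$, the pairs $(i,j)$, $(j,l)$, $(k,l)$, $(i,n)$ are good and $k,m\in I_0$. Theorem~\ref{T:orth} with $p=k$ then yields, for each fixed $l\in I^{g_4}$, $m\in I^{g_5}$,
$$
\qd(k)\sum_{n\in N}\qd(n)\,*_{inm^*}*_{jln^*}\!\left(\sjtop ijklmn\otimes\sjtop k{j^*}inml\right)=\Id(i,j,k^*)\otimes\Id(k,l,m^*).
$$
Replacing the inner sum over $N$ by a sum over $I^{g_6}$ is harmless: any $n\in I^{g_6}\setminus N$ has $H^{jl}_n=0$ or $H^{in}_m=0$, which forces the corresponding $6j$-symbol factor to vanish, and conversely every simple summand of $V_j\otimes V_l$ lies in $\cat_{g_6}$, hence is of the form $V_n$ with $n\in I^{g_6}$, so no nonzero terms are omitted.

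Next I will multiply the displayed identity by $\bb(l)\bb(m)$, apply the contraction $*_{klm^*}$, and sum over $l\in I^{g_4}$, $m\in I^{g_5}$. The left-hand side becomes exactly the left-hand side of \eqref{E:algLbubble}. On the right, $*_{klm^*}$ acts only on the factor $\Id(k,l,m^*)\in H(k,l,m^*)\otimes H(m,l^*,k^*)$; since $(k,l)$ is good, Lemma~\ref{L:nondegeneracy} makes the pairing $(,)_{klm^*}$ perfect, and evaluating the canonical element on the pairing gives $*_{klm^*}(\Id(k,l,m^*))=\dim H(k,l,m^*)$. Hence the right-hand side collapses to
$$
\left(\sum_{l\in I^{g_4},\,m\in I^{g_5}}\bb(l)\bb(m)\,\dim H(k,l,m^*)\right)\Id(i,j,k^*).
$$

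To finish, I will invoke the defining axiom for $\bb$: the degrees satisfy $g_3+g_4+(-g_5)=0$ with all three in $\Gr\setminus\Gs$ (using the symmetry $-\Gs=\Gs$), and reindexing via the involution $m\mapsto m^*$ together with $\bb(m^*)=\bb(m)$ makes the parenthesized coefficient coincide with the sum defining $\bb(k)$; it therefore collapses to $\bb(k)$, yielding \eqref{E:algLbubble}. The only real obstacle I anticipate is the grading bookkeeping needed to justify the uniform goodness hypotheses and the extension $N\rightsquigarrow I^{g_6}$, together with verifying the evaluation of the canonical element under $*_{klm^*}$; once these are in hand the lemma is an immediate corollary of Theorem~\ref{T:orth} and the $\bb$-axiom.
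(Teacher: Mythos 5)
Your proposal is correct and follows essentially the same route as the paper: apply the orthonormality relation with $p=k$ for fixed $l,m$, check goodness via the grading, note that $N\subset I^{g_6}$ with vanishing terms for $n\in I^{g_6}\setminus N$, then multiply by $\bb(l)\bb(m)$, contract with $*_{klm^*}$ using $*_{klm^*}(\Id(k,l,m^*))=\dim H(k,l,m^*)$, and conclude with the defining relation for $\bb$. The reindexing $m\mapsto m^*$ you spell out at the end is a detail the paper leaves implicit, but the argument is the same.
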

\begin{proof}
  We can apply the orthonormality relation (Theorem \ref{T:orth}) to the tuple
  consisting of $i,j,k$, arbitrary $l \in I^{g_4},$ $m \in I^{g_5},$ and
  $p=k$.  Note that $k,m\in I_0$ and the pair $(i,j)$ is good because $
  V_i\otimes V_j\in \cat_{g_1+g_2}=\cat_{g_3}$ and $g_3\notin X$. Similarly,
  the pairs $ (j,l)$ and $ (k,l)$ are good.  Analyzing the grading in $\cat$,
  we observe that the set $N$ appearing in the orthonormality relation is a
  subset of $I^{g_6}$.   Moreover, if $n\in I^{g_6} \setminus N$ then 
  $$\sjtop ijklmn =0$$
  as $H(j,l,n^*)=0$ or $H(i,n,m^*)=0$ for such $n$.  
  If we multiply the orthogonality relation by $\bb(l)\bb(m)$, apply
  $*_{klm^*}$, and sum over all pairs $(l,m)\in I^{g_4}\times I^{g_5}$ we
  obtain that the left hand side of \eqref{E:algLbubble} is equal to
  \begin{equation}
  \label{E:bbII}
   \sum_{ (l,m)\in I^{g_4}\times I^{g_5} }
\bb(l)\bb(m) *_{klm^*}(\Id(i,j,k^*) \otimes \Id(k,l,m^*))
  \end{equation}
  Finally, using the equality $*_{klm^*}(\Id(k,l,m^*))=\dim(H(k,l,m^*))$ and
  the relations satisfied by~$\bb$, we obtain that the expression
  \eqref{E:bbII} is equal to $\bb(k)\Id(i,j,k^*)$.
 \end{proof}

\begin{lemma}\label{L:AdmMove}
  Let $\wp$ be an admissible $\Gr$-coloring of $\T$.  Suppose that
  $(\T',\LL')$ is an $H$-triangulation obtained from $(\T,\LL)$ by a negative
  {\Pachner}, {\lune} or {\bubble} move.  Then $\wp$ restricts to an
  admissible $\Gr$-coloring $\wp'$ of $\T'$ and
\begin{equation}\label{tvt}
  TV(\T,\LL,\wp)=TV(\T',\LL',\wp').
\end{equation}
\end{lemma}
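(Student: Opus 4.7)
The plan is to first note that admissibility of $\wp'$ is automatic: each of the three negative moves only removes simplices, so the restriction $\wp' := \wp|_{\T'}$ is well-defined on the oriented edges of $\T'$, it satisfies the cocycle condition on every face of $\T'$ (since any such face is either a face of $\T$, or its cocycle condition can be derived from those of $\T$ via the boundary of a tetrahedron of $\T'$ whose other faces are inherited from $\T$), and it takes values in $\Gr\setminus\Gs$ because $\wp$ does. The substance of the lemma is the state-sum identity \eqref{tvt}, which I would establish case by case by matching the local change of the state sum with one of the three algebraic identities already at our disposal. In each case I fix $\p'\in\states(\wp')$ and partition the states $\p\in\states(\wp)$ restricting to $\p'$ according to the colors of the edges removed by the move; all contributions to $TV(\T,\LL,\wp)$ and $TV(\T',\LL',\wp')$ agree except for a local factor coming from the affected region.

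For the negative Pachner move, $\T$ contains three tetrahedra sharing a common internal edge $e^*\notin\LL$, replaced in $\T'$ by two tetrahedra meeting along a face. Summing the state on $e^*$, weighted by $\qd(\p(e^*))$, over $I^{\wp(e^*)}$ turns the contribution of the three tetrahedra into the left-hand side of the Biedenharn--Elliott identity (Theorem~\ref{T:BEId}), with the summed index playing the role of $j$ in the statement of that theorem, while the two tetrahedra of $\T'$ contribute its right-hand side. The multiplicity modules on the six external faces match between $\T$ and $\T'$ and serve as the ambient $K$-module of both sides of the identity.

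For the negative lune move, the two collapsing tetrahedra produce a single disappearing edge $e^*\notin\LL$. Summing over $\p(e^*)\in I^{\wp(e^*)}$ with weight $\qd(\p(e^*))$, together with the $\qd$-prefactor carried by another lune edge, yields precisely the left-hand side of the orthonormality relation (Theorem~\ref{T:orth}); its right-hand side, the Kronecker delta times $\Id(i,j,k^*)\otimes\Id(k,l,m^*)$, encodes the trivial contribution of the collapsed region to $TV(\T',\LL',\wp')$, and the Kronecker delta is automatically satisfied by the matching of $\wp$-values on the two external edges.

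For the negative bubble move, we remove a vertex $v$, three incident edges ($vv_1\notin\LL$ and $vv_2,vv_3\in\LL$), and the two bubble tetrahedra, while the edge $v_2v_3$, previously outside $\LL$, is restored to $\LL'$. Summing over the colors $n,l,m$ on $vv_1,vv_2,vv_3$ with weights $\qd(n)\bb(l)\bb(m)$ and including the $\qd(k)$-weight for $v_2v_3\in\LL^c$ exhibits the left-hand side of Lemma~\ref{L:algBubble}; its right-hand side $\bb(k)\,\Id(i,j,k^*)$ supplies both the new $\bb(k)$-weight on $v_2v_3\in\LL'$ and the trivial contraction of multiplicity modules along the face $v_1v_2v_3$, which becomes the internal face of $\T'$ between the two tetrahedra previously separated by the bubble. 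The main obstacle will be bookkeeping: in each case one must carefully align vertex orderings and edge orientations with the tetrahedral conventions of Section~\ref{S:Modified6j}, and verify that the goodness and $I_0$-hypotheses of Theorems~\ref{T:BEId},~\ref{T:orth}, and Lemma~\ref{L:algBubble} are implied by the assumption $\wp(\T_1)\subset \Gr\setminus\Gs$ together with property~(3) in the definition of $\Gs$. The tetrahedral symmetry of the modified $6j$-symbols then reduces the verification to one canonical configuration per move.
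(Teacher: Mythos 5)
Your proof follows the paper's argument exactly: admissibility of the restricted coloring is immediate, and each negative move is matched to its algebraic counterpart (the Biedenharn--Elliott identity for the Pachner move, the orthonormality relation for the lune move, and Lemma~\ref{L:algBubble} for the bubble move), with the required goodness hypotheses supplied by the admissibility of $\wp$ together with the semisimplicity of the graded pieces $\cat_g$ for $g\notin\Gs$ --- the paper even writes out only the bubble case in detail, as you do. The one quibble is in your lune paragraph: the Kronecker delta $\delta_{k,p}$ is not ``automatically satisfied'' by the matching of $\wp$-values, since the two parallel edges of $\T$ that become identified in $\T'$ carry the same $\Gr$-color but may still receive different state values in $I^{\wp(e)}$; rather, the delta is exactly the mechanism that annihilates those states of $\T$ which do not descend to states of $\T'$.
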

\begin{proof} The values of $\wp'$ form a subset of the set of values of
  $\wp$, and therefore the admissibility of $\wp$ implies the the
  admissibility of $\wp'$.

  The rest of the proof is similar to the one in \cite[Section VII.2.3]{Tu}.
  In particular, we can translate the {\Pachner}, {\lune}, and {\bubble} move
  into algebraic identities: the Biedenharn-Elliott identity, the
  orthonormality relation and Equation \eqref{E:algLbubble}, respectively.
  The first two identities require certain pairs of indices to be good.  To
  see that all relevant pairs in this proof are good we make the following
  observation.  If $e_1,e_2, e_3$ are consecutive oriented edges of a $2$-face
  of $\T$, then $\wp(e_1)+\wp(e_2)= \wp(e_3)\in \Gr\setminus X$, and the
  semi-simplicity of $\cat_{\wp(e_3)}$ implies that the pair
  $(\p(e_1),\p(e_2))$ is good for any $\p\in \states (\wp)$. Translating into
  the language of $6j$-symbols, we obtain that all the $6j$-symbols involved
  in our state sums are admissible.

  We will now give a detailed proof of \eqref{tvt} for a negative {\bubble}
  move.  Let $\p'\in \states (\wp')$ and $S \subset \states(\wp)$ be the set
  of all states $\p$ of $\wp$ extending $\p'$.   It is enough to show that the
  term $TV_{\p'}$ of $TV(\T',\LL',\wp')$ associated to $\p'$ is equal to the sum $TV_S$ of the
  terms of $TV(\T,\LL,\wp)$ associated to the states in the set $S$.  Here it is equivalent to work with the positive {\bubble} move, which we do for convenience.

 Recall the description of the positive bubble given at the beginning of this section.  Let $ v,v_1, v_2, v_3$ (resp. $F=v_1v_2v_3$) be the vertices (resp. face) given in this description (see Figure \ref{F:col_bub}).
 \begin{figure}[b]
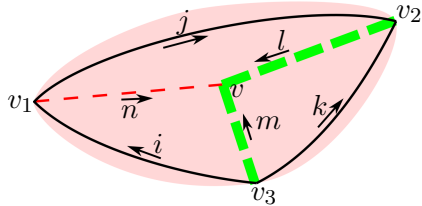

 $\epsh{bubblebl2}{70pt}$
 \put(-32,-5){\small $k$}\put(-53,-10){\small $m$}
 \put(-45,20){\small $l$}\put(-83,28){\small $j$}
 \put(-104,-6){\small $n$}\put(-92,-19){\small $i$}
 \put(-147,-2){\small $v_1$}\put(0,32){\small $v_2$}
 \put(-55,-37){\small $v_3$}
  \put(-63,2){\small $v$}
\caption{$T_1\cup T_2$ colored by $\p\in S$}
 \label{F:col_bub}
\end{figure}
Set $i=\p'(v_3v_1)$, $j=\p'(v_1v_2)$ and $k=\p'(v_3v_2)$. Let $T_1$ and $T_2$
be the two new tetrahedra of $\T$ and $f_1,f_2,f_3$ be their faces
$vv_2v_3,vv_3v_1,vv_1v_2$.  A state $\p\in S$ is determined by the values
$l,m,n$ of $\p$ on the edges $v_2v, v_3v, v_1v$, respectively (see Figure
\ref{F:col_bub}).

As explained above, in the bubble move two tetrahedra meeting along $F$ are
unglued and a $3$-ball is inserted between the resulting two copies $F_1$ and
$F_2$ of $F$. Note that $F_1$ and $F_2$ are faces of $\T$.  Also this $3$-ball
is triangulated with the two tetrahedra $T_1$ and $T_2$.  

For a fixed state on a triangulation, denote by $*_f$ the contraction along a
face $f$.  One can write $$TV_{\p'}=*_{F}(\bb(\p'(v_3v_2))X)$$ where $X$ is
all the factors of the state sum for $\p'$ except for $*_{F}$ and
$\bb(\p'(v_3v_2))$.  Since all $\p\in S$ restrict to $\p'$, we have that
$TV_S$ is equal to
$$
*_{F_1}*_{F_2}\left(X\otimes\sum_{\p\in
    S}{\mathsmal{\qd(\p(v_3v_2))\qd(\p(v_1v))\bb(\p(v_2v))\bb(\p(v_3v))}}
  *_{f_1}*_{f_2}*_{f_3} \left(|T_1|_\p\otimes|T_2|_\p\right)\right)
$$
where the additional factors come from the triangulation of the $3$-ball and
the fact that the link goes through $v_3v,vv_2$ instead of $v_3v_2$.  Here,
the contraction $*_{F_1}*_{F_2}$ is equal to $*_{ijk^*}*_{ijk^*}$.  On the
other hand, one has
$*_{F}(X)=*_{ijk^*}(X)=*_{ijk^*}*_{ijk^*}(X\otimes\Id(i,j,k^*))$.  Hence the
equality $TV_{\p'}=TV_S$ follows from

\begin{align*}
  \sum_{\p\in S}\qd(\p(v_3v_2))\qd(\p(v_1v))&\bb(\p(v_2v))\bb(\p(v_3v))
  *_{f_1}*_{f_2}*_{f_3} \left(|T_1|_\p\otimes|T_2|_\p\right) \\
  &=\Id(i,j,k^*)\bb(\p'(v_3v_2))
\end{align*}
which is exactly the identity established in Lemma~\ref{L:algBubble}.
\end{proof}

Let $\T_0$ be the set of vertices of $\T$.  Let $\delta$ be the coboundary
operator from the $G$-valued $0$-cochains on $\T$ to the $G$-valued
$1$-cochains on $\T$.

\begin{lemma}\label{L:Coboundary}
  Let $v_0\in\T_0$ and $c:\T_0\to \Gr$ be a map such that $c(v)=0$ for all
  $v\neq v_0$ and $c(v_0)\notin \Gs$.  If $\wp$ and $\wp+\delta c$ are
  admissible $\Gr$-colorings of $\T$, then
  $TV(\T,\LL,\wp)=TV(\T,\LL,\wp+\delta c)$
\end{lemma}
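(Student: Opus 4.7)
The plan is to deduce this cohomological invariance from the elementary-move invariance in Lemma~\ref{L:AdmMove}, by introducing an auxiliary vertex near $v_0$ via a bubble move and transferring the coboundary there. First I would arrange, by applying auxiliary lune moves if necessary (which preserve both sides by Lemma~\ref{L:AdmMove}), that $v_0$ is a vertex of some face $F=v_0v_1v_2$ of $\T$ containing an edge of $\LL$. This is possible because $L$ is non-empty and $M$ is connected.

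Next I would apply a positive bubble move at $F$, obtaining an enlarged $H$-triangulation $(\T^\sharp,\LL^\sharp)$ with a new vertex $v$ and new edges $vv_0,vv_1,vv_2$. For a generic $g\in \Gr\setminus \Gs$ (which exists because $\Gs$ cannot cover $\Gr$ by finitely many translates), I extend $\wp$ and $\wp+\delta c$ admissibly to $\T^\sharp$ by setting $\wp^\sharp(vv_0)=g$ and $(\wp+\delta c)^\sharp(vv_0)=g-c(v_0)$; the cocycle conditions on the new triangles then force $\wp^\sharp(vv_i)=g+\wp(v_0v_i)=(\wp+\delta c)^\sharp(vv_i)$ for $i=1,2$. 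By Lemma~\ref{L:AdmMove}, $TV(\T,\LL,\wp)=TV(\T^\sharp,\LL^\sharp,\wp^\sharp)$ and $TV(\T,\LL,\wp+\delta c)=TV(\T^\sharp,\LL^\sharp,(\wp+\delta c)^\sharp)$, so it suffices to identify the two state sums on $\T^\sharp$.

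The two cocycles $\wp^\sharp$ and $(\wp+\delta c)^\sharp$ agree on the new edges $vv_1,vv_2$ and on every old edge not incident to $v_0$, while their difference is concentrated on $vv_0$ and on the edges $v_0w$ for $w$ a neighbor of $v_0$ in $\T$. The next step would be to apply a sequence of $2\leftrightarrow 3$ Pachner moves in the star of $v_0$ inside $\T^\sharp$ that combinatorially exchanges the roles of $v_0$ and $v$, transferring the residual coboundary entirely to the new vertex $v$; a negative bubble move at $v$ then eliminates it, yielding the desired equality. The main obstacle is verifying admissibility along this Pachner sequence, since each intermediate cocycle must take values in $\Gr\setminus \Gs$; here the assumption $c(v_0)\notin \Gs$ is essential, as it guarantees that the relevant shifted edge values avoid $\Gs$. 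As a parallel strategy in case the explicit move sequence becomes unwieldy, one may instead prove a global algebraic identity for the partial state sum over the star of $v_0$, generalizing Lemma~\ref{L:algBubble} from a single bubble to an arbitrary link of a vertex, from which the required equality follows by specialization.
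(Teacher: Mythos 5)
Your overall strategy---insert an auxiliary vertex next to $v_0$ by a bubble move, load the difference $\delta c$ onto the new edge, sweep it across the star of $v_0$, and then remove it---is the same as the paper's, which performs exactly this sweep in the dual language of skeletons: a disk $D$ colored $g=c(v_0)$ is attached where $L$ crosses the boundary sphere of the ball dual to $v_0$, and $\partial D$ is isotoped across that sphere to the other crossing point via Matveev--Piergallini moves dual to the {\Pachner} and {\lune} moves. Your steps (1) and (2) are sound: a face containing $v_0$ and an edge of $\LL$ always exists (no preliminary moves needed, since $v_0$ lies on two edges of $\LL$ and every edge lies on a face), and for generic $g$ both extensions $\wp^\sharp$ and $(\wp+\delta c)^\sharp$ are admissible and agree except on edges incident to $v_0$.

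Step (3), however, is a genuine gap, and it is the heart of the lemma. The assertion that a sequence of $2\leftrightarrow 3$ {\Pachner} moves ``exchanges the roles of $v_0$ and $v$'' is unjustified: after the bubble move $v$ lies on exactly two tetrahedra, while the star of $v_0$ is arbitrary, so there is no combinatorial symmetry to invoke; realizing the transfer requires an explicit sweep through the link of $v_0$ (which in general needs {\lune} moves as well as {\Pachner} moves), together with bookkeeping of how the coloring changes at each intermediate stage. Moreover, your justification of admissibility along the sweep is not the right one: $c(v_0)\notin\Gs$ does not imply that values shifted by $c(v_0)$ avoid $\Gs$. What actually makes the paper's sweep work is that at every intermediate stage each face carries either its $\wp$-value, or its $(\wp+\delta c)$-value, or the fixed value $g=c(v_0)$ of the swept disk; the first two lie outside $\Gs$ because \emph{both} colorings are assumed admissible, and the third does precisely by the hypothesis $c(v_0)\notin\Gs$. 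Until you either exhibit the move sequence with this coloring bookkeeping, or actually prove the ``star identity'' you mention as a fallback (which is essentially equivalent in difficulty and would itself be established by such a sweep, via Theorems \ref{T:BEId} and \ref{T:orth}), the proof is incomplete. A secondary omission: the star of $v_0$ need not be embedded, and the paper must invoke quasi-regularity (edges of $\T$ are not loops) to see that the sweep still goes through in that case.
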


\begin{proof}
  In the proof, we shall use the language of skeletons of 3-manifolds dual to
  the language of triangulations (see, for instance \cite{Tu,BB}).  A skeleton
  of $M$ is a 2-dimensional polyhedron $P$ in $M$ such that $M\setminus P$ is
  a disjoint union of open 3-balls and locally $P$ looks like a plane, or a
  union of 3 half-planes with common boundary line in $\R^3$, or a cone over
  the 1-skeleton of a tetrahedron.  A typical skeleton of $M$ is constructed
  from a triangulation $T$ of $M$ by taking the union $P_T$ of the 2-cells
  dual to its edges. This construction establishes a bijective correspondence
  $T\leftrightarrow P_T$ between the quasi-regular triangulations $T$ of $M$
  and the skeletons $P$ of $M$ such that every 2-face of $P$ is a disk
  adjacent to two distinct components of $M-P$. To specify a Hamiltonian link
  $L$ in a triangulation $T$, we provide some faces of $P_T$ with dots such
  that each component of $M-P_T$ is adjacent to precisely two (distinct)
  dotted faces. These dots correspond to the intersections of $L$ with the
  2-faces. The notion of a $\Gr$-coloring on an $H$-triangulation $T$ of
  $(M,L)$ can be rephrased in terms of $P_T$ as a $2$-cycle on $P_T$ with
  coefficients in $\Gr$, that is a function assigning an
  element of $\Gr$ to every oriented 2-face of $P_T$ such that opposite
  orientations of a face give rise to opposite elements of $\Gr$ and the sum
  of the values of the function on three faces of $P_T$ sharing a common edge
  and coherently oriented is always equal to zero.  The notion of a state on
  $T$ can be also rephrased in terms of $P_T$. The state sum $TV(T, L, \wp)$
  can be rewritten in terms of $P=P_T$ in the obvious way, and will be denoted
  $TV(P,\wp)$ in the rest of the proof. The moves on the $H$-triangulations
  may also be translated to this dual language and give the well-known
  Matveev-Piergallini moves on skeletons adjusted to the setting of
  Hamiltonian links, see~\cite{BB}. We shall use the Matveev-Piergallini moves
  on dotted skeletons dual to the {\Pachner} move and to the {\lune}
  moves. Instead of the dual {\bubble} move we use the so-called $b$-move
  $P\to P'$. The $b$-move adds to a dotted skeleton $P\subset M$ a dotted
  2-disk $D\subset M$ such that the circle $\partial D$ lies on a dotted face
  $f$ of $P$, bounds a small 2-disk $D'\subset f$ containing the dot of $f$,
  and the 2-sphere $D\cup D'$ bounds an embedded 3-ball in $M$ meeting $P$
  solely along $D'$. Note that a dual $H$-bubble move on dotted skeletons is a
  composition of a $b$-move with a dual {\lune} move.

  We apply the $b$-move to the polyhedron $P=P_T$ as follows.  Consider the
  open 3-ball of $M-P_T$ surrounding the vertex $v_0$.  Assume first that the
  closure $ B$ of this open ball is an embedded closed 3-ball in $M$.  The
  2-sphere $\partial B$ meets~$L$ at two dots arising as the intersection of
  $P$ with the two edges of $L$ adjacent to~$v_0$. We call these dots the
  south pole and the north pole of $B$.  We apply the $b$-move $P\to P'=P\cup
  D$ at the south pole of $B$. Here $D\subset B$ is a 2-disk such that $D\cap
  P=\partial D=\partial D'$, where $D' $ is a small disk in $P$ centered at
  the south pole and contained in a face $f$ of $P$.  The given $\Gr$-coloring
  $\wp$ of $P$ induces a $\Gr$-coloring $\wp'$ of $P'$ which coincides with
  $\wp$ on the faces of $P$ distinct from $f$ and assigns to the faces $f-D'$,
  $D$, $D'$ of $P'$ the elements $\p(f)$, $g =c(v_0)\notin \Gs$, $\p(f)-g$ of
  $\Gr$, respectively. Here the orientation $D' $ is induced by the one of $M$
  restricted to $B$ and $f-D'$, $D$ are oriented so that $\partial
  D'= \partial D =- \partial (\overline{f-D'})$ in the category of oriented
  manifolds.  Next, we push the equatorial circle $\partial D$ towards the
  north pole of $\partial B$.  This transformation changes $P'$ by isotopy in
  $M$ and a sequence of Matveev-Piergallini moves dual to the {\Pachner} move
  and to the {\lune} moves. This is accompanied by the transformation of the
  $\Gr$-coloring $\wp'$ of $P'$ which keeps the colors of the faces not lying
  on $\partial B$ or lying on $\partial B$ to the north of the (moving)
  equatorial circle $\partial D$ and deduces $g$ from the $\wp$-colors for the
  faces lying on $\partial B$ to the south of $\partial D$. The color of the
  face $\Int D$ remains $g$ throughout the transformations.  These
  $\Gr$-colorings are admissible because all colors of the faces of the
  southern hemisphere are given by $\wp+\delta c$ whereas the colors of the
  faces of the northern hemisphere are given by $\wp$.  Hence, by Lemma
  \ref{L:AdmMove}, $TV(P',\wp')$ is preserved through this isotopy of
  $\partial D$ on~$\partial B$. Finally, at the end of the isotopy, $\partial
  D$ becomes a small circle surrounding the northern pole of $\partial B$.
  Applying the inverse $b$-move, we now remove $D$ and obtain the skeleton $P$
  with the $\Gr$-coloring $\wp+\delta(c)$. Hence $TV(P,\wp)=TV(P, \wp+\delta
  (c))$ which is equivalent to the claim of the lemma. In the case where the
  3-ball $B$ is not embedded in $M$, essentially the same argument applies.
  The key observation is that since $\T$ is a quasi-triangulation, the edges
  of $\T$ adjacent to $v_0$ are not loops, and therefore the ball $B$ does not
  meet itself along faces of $P$ (though it may meet itself along vertices
  and/or edges of $P$).
\end{proof}

\begin{lemma}\label{C:ConstCobound}   If $\wp$ and $\wp'$
  are two admissible $\Gr$-colorings of $\T$ representing the same class in
  $H^1(M;G)$, then $TV(\T,\LL,\wp)=TV(\T,\LL,\wp')$.
\end{lemma}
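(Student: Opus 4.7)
Since $\wp$ and $\wp'$ represent the same class in $H^1(M;\Gr)$, we can write $\wp'=\wp+\delta c$ for some $0$-cochain $c\colon\T_0\to\Gr$. The strategy is to reduce the lemma to Lemma \ref{L:Coboundary} by building a finite path of admissible colorings $\wp=\wp_0,\wp_1,\ldots,\wp_N=\wp'$ in which each consecutive pair differs by the coboundary of a $0$-cochain supported at a single vertex with value in $\Gr\setminus\Gs$. The naive idea of enumerating the vertices $v_1,\ldots,v_n$ and applying Lemma \ref{L:Coboundary} to the successive partial sums $\wp+\delta(\sum_{j\le k}c(v_j)\mathbf{1}_{v_j})$ fails on two counts: the values $c(v_k)$ may lie in $\Gs$, and the intermediate partial sums need not be admissible.

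To overcome both issues simultaneously, I would introduce auxiliary parameters $g_1,\ldots,g_n\in\Gr$ and pass from $\wp$ to $\wp'$ in $2n$ elementary steps. For $1\le k\le n$, the $k$-th step adds the coboundary of the $0$-cochain supported at $v_k$ with value $g_k$. For $n+1\le k\le 2n$, writing $m=k-n$, the $k$-th step adds the coboundary of the $0$-cochain supported at $v_m$ with value $c(v_m)-g_m$. The sum of all these coboundaries is $\delta c$, so we indeed land at $\wp'$ at the end. To apply Lemma \ref{L:Coboundary} at every step we need: (i) $g_k\notin\Gs$ and $c(v_k)-g_k\notin\Gs$ for all $k$, and (ii) all $2n+1$ intermediate colorings are admissible.

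The crux of the argument, and the main obstacle, is the simultaneous fulfillment of condition (ii). On any oriented edge $e=v_iv_j$ the color at an intermediate stage has the form $\wp(e)+\beta_j-\beta_i$ with each $\beta_\ell\in\{0,g_\ell,c(v_\ell)\}$, and admissibility asks this value to lie outside $\Gs$. The uniform cases $\beta_\ell=0$ for all $\ell$ and $\beta_\ell=c(v_\ell)$ for all $\ell$ recover the admissibility of $\wp$ and $\wp'$ respectively and hold by hypothesis; every remaining case, together with the inclusions in (i), produces a finite list of constraints, each of the form ``some $\pm1$-integer combination of $g_1,\ldots,g_n$ does not lie in a specified translate of $\Gs$''. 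I would then fix $g_1,g_2,\ldots,g_n$ sequentially: once $g_1,\ldots,g_{k-1}$ have been chosen, each constraint whose largest involved index is $k$ becomes a condition of the form $g_k\notin a+\Gs$ for some $a\in\Gr$ determined by the already-fixed data, giving finitely many such conditions; the hypothesis that $\Gr$ is not covered by finitely many translates of $\Gs$ (property (2) of $\Gs$) then furnishes an admissible $g_k$. Constraints whose largest involved index is some $j>k$ are simply postponed and resolved at step $j$. Having produced $g_1,\ldots,g_n$ in this way, each of the $2n$ elementary moves satisfies the hypotheses of Lemma \ref{L:Coboundary}, and chaining the resulting equalities of $TV$ across the path yields $TV(\T,\LL,\wp)=TV(\T,\LL,\wp')$.
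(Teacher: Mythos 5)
Your proof is correct and follows essentially the same route as the paper: both reduce the lemma to Lemma~\ref{L:Coboundary} by splitting each vertex value $c(v_k)$ as $g_k+(c(v_k)-g_k)$, with the auxiliary elements $g_k$ chosen via the non-covering property of $\Gs$ so that every intermediate coloring stays admissible. The paper merely packages this as an induction on the number of vertices in the support of $c$, introducing one generic $g$ per inductive step, which when unrolled is exactly your $2n$-step path.
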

\begin{proof}
  As $\wp$ and $\wp'$ represent the same cohomology class, $\wp'=\wp+\delta
  c_1+\cdots+\delta c_n$ where $c_i:\T_0\to G$ is a 0-cochain taking non-zero
  value at a single vertex $ v_i$ for all $i=1,...,n$.  We prove the desired
  equality by induction on $n$.  If $n=0$ then $\wp'=\wp$ and the equality is
  clear.  Otherwise, let $E_{1}$ be the set of (oriented) edges of $\T$
  beginning at $v_1$. Pick any
  $$g\in \Gr\setminus\Big[ X\cup \bigcup_{e\in E_{1}} \big(\wp(e)+\Gs\big)
  \cup \bigcup_{e\in E_{1}} \big(c_1(v_1)+\wp'(e)+\Gs\big)\Big].$$ Let
  $c:\T_0\to G$ be the map given by $c(v_1)=g$ and $c(v)=0$ for all $v\neq
  v_1$.  Then $\wp+\delta c$ and $\wp+\delta c+\delta c_2+\cdots+\delta
  c_n=\wp'+ \delta (c-c_1)$ are admissible colorings.  Lemma
  \ref{L:Coboundary} and the induction assumption imply that
  \begin{align*}
  TV(\T,\LL,\wp)&=TV(\T,\LL,\wp+\delta c)\\
  &=TV(\T,\LL,\wp+\delta c+\delta c_2+\cdots+\delta c_n)\\
  &=TV(\T,\LL,\wp').
  \end{align*}
\end{proof}

\begin{theorem}\label{T:one-move}
  Let $(\T,\LL)$ and $(\T',\LL')$ be two $H$-triangulations of $(M,L)$ such
  that $(\T',\LL')$ is obtained from $(\T,\LL)$ by a single {\Pachner} move,
  {\bubble} move, or {\lune} move.  Then for any admissible $\Gr$-colorings
  $\wp$ and $\wp'$ on $\T$ and $\T'$ respectively, representing the same class
  in $H^1(M;G)$,
  $$
  TV(\T,\LL,\wp)=TV(\T',\LL',\wp').
  $$
\end{theorem}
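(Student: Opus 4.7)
The plan is to reduce the statement to Lemma \ref{L:AdmMove} (which handles negative moves) combined with Lemma \ref{C:ConstCobound} (cohomology-class independence). By the symmetry between a move and its inverse, assume $(\T',\LL')$ is obtained from $(\T,\LL)$ by a positive move. The crux is to produce an admissible coloring $\hat\wp$ on $\T$, cohomologous to $\wp$, whose extension $\tilde\wp$ to $\T'$ (determined on the new edges by the cocycle condition) is also admissible.

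Once such $\hat\wp$ and $\tilde\wp$ are in hand, Lemma \ref{L:AdmMove} applied to the reverse (negative) move $(\T',\LL')\to(\T,\LL)$, which restricts $\tilde\wp$ back to $\hat\wp$, gives $TV(\T',\LL',\tilde\wp)=TV(\T,\LL,\hat\wp)$. Since $\hat\wp,\wp$ are cohomologous admissible colorings on $\T$ and $\tilde\wp,\wp'$ are cohomologous admissible colorings on $\T'$ (all representing $h$), Lemma \ref{C:ConstCobound} gives $TV(\T,\LL,\hat\wp)=TV(\T,\LL,\wp)$ and $TV(\T',\LL',\tilde\wp)=TV(\T',\LL',\wp')$. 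Chaining the three equalities yields the theorem.

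To construct $\hat\wp$ and $\tilde\wp$, I analyze the move types. For a positive \bubble\ move, $\T'$ contains a new interior vertex $v$ and three new edges $vv_1,vv_2,vv_3$. I set $\hat\wp:=\wp$ and extend by choosing a single free parameter $a:=\tilde\wp(v_1v)$; the cocycle condition then forces $\tilde\wp(v_jv)=a-\wp(v_1v_j)$ for $j=2,3$. Since $\Gr$ is not a finite union of translates of $\Gs$, I may pick $a$ outside $\Gs\cup(\wp(v_1v_2)+\Gs)\cup(\wp(v_1v_3)+\Gs)$, making all three new labels admissible. For a positive \Pachner\ or \lune\ move, a single new edge $e^*$ is introduced with no new vertex, and $\tilde\wp(e^*)$ is forced by the cocycle condition on the new faces. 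If the forced value already lies in $\Gr\setminus\Gs$, take $\hat\wp:=\wp$; otherwise, pick a vertex $v_0$ incident to $e^*$ and a coboundary $\delta c$ with $c$ supported at $v_0$ and $c(v_0)=g$. Replacing $\wp$ by $\wp+\delta c$ shifts the labels of the edges incident to $v_0$ in $\T$, and consequently the cocycle-forced label of $e^*$ in $\T'$, by $\pm g$; all other labels are unchanged. Only finitely many values of $g$ would push some affected label into $\Gs$, so an admissible $g$ exists by the finite-cover hypothesis on $(\Gr,\Gs)$.

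The main obstacle is precisely this last finite-avoidance construction: I must choose a single parameter (free at a new vertex in the bubble case, or $g$ in the Pachner/lune cases) that simultaneously keeps every affected edge label outside $\Gs$. The hypothesis that $\Gr$ cannot be covered by finitely many translated copies of $\Gs$ is used exactly here, and it makes the construction work uniformly across all three move types. Once $\hat\wp$ and $\tilde\wp$ are produced, the invocation of Lemmas \ref{L:AdmMove} and \ref{C:ConstCobound} is automatic.
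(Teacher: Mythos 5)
Your argument is correct, but it takes a noticeably harder route than the paper's. The paper reduces to the case where the move from $\T$ to $\T'$ is \emph{negative}: then one of the two given colorings (namely $\wp$) simply \emph{restricts} to $\T'$, and Lemma \ref{L:AdmMove} guarantees this restriction is automatically admissible and preserves $TV$; a single application of Lemma \ref{C:ConstCobound} on $\T'$ then finishes. (If the move is positive, one swaps the roles of $(\T,\wp)$ and $(\T',\wp')$ and restricts $\wp'$ instead.) You instead fix the positive direction and \emph{extend} a coloring across the move, which forces you to first replace $\wp$ by a cohomologous $\hat\wp$ so that the cocycle-forced values on the new edges land in $\Gr\setminus\Gs$ -- in effect re-proving a relative version of Lemma \ref{eee+} for each move type, using the hypothesis that $\Gr$ is not a finite union of translates of $\Gs$. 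That avoidance argument is sound (the bad parameters form a finite union of translates of $\Gs$, not merely a finite set, as you correctly note), and your chain of three equalities via Lemmas \ref{L:AdmMove} and \ref{C:ConstCobound} is valid. What your approach buys is an explicit recipe for pushing a coloring forward through a positive move; what it costs is the extra case analysis, all of which is avoided by the observation that restriction, unlike extension, never threatens admissibility.
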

\begin{proof}
  For concreteness, assume that $\T'$ is obtained from $\T$ by a negative
  move.  The admissible $\Gr$-coloring $\wp$ of $\T$ restricts to an
  admissible $\Gr$-coloring $\wp''$ of $\T'$ which represent the same class in
  $H^1(M;G)$.  Now Lemma~\ref{L:AdmMove} implies that
  $TV(\T,\LL,\wp)=TV(\T',\LL',\wp'')$ and Lemma \ref{C:ConstCobound} implies
  that $TV(\T',\LL',\wp'')=TV(\T',\LL',\wp')$.
\end{proof}

\begin{proof}[Proof of Theorem \ref{inveee}]
  From Proposition \ref{L:Toplemma} we know that any two $H$-triangulation of
  $(M,L)$ are related by a finite sequence of elementary moves.  Then the
  result follows from the Theorem \ref {T:one-move} by induction on the number
  of moves.
\end{proof}

\section{Totally symmetric $6j$-symbols}\label{S:totallySym}

Let $\cat$ be a pivotal tensor Ab-category with ground ring $K$, basic data
$\{V_i, w_i:V_i \to V_{i^*}^*\}_{i \in I}$, and t-ambi pair $(I_0,\qd)$.
Recall that the associated modified $6j$-symbols have the symmetries of an
oriented tetrahedron. The modified $6j$-symbols are totally symmetric if they
are invariant under the full group of symmetries of a tetrahedron. More
precisely, suppose that for every good triple $i,j,k\in I$, we have an
isomorphism $ \eta(i,j,k): H (i,j,k) \rightarrow H(k,j,i )$ satisfying the
following conditions:
$$\eta(i,j,k)=\eta( j,k,i)=\eta(k, i,j ),$$
$$ \eta(k,j,i) \circ \eta(i,j,k)={\text {id}} :H(i,j,k)\to H(i,j,k) $$
and
$$
(,)_{kji}\, (\eta(i,j,k)\otimes \eta(k^*,j^*,i^*))=(,)_{ijk}:H(i,j,k)\otimes_K
H(k^*, j^*, i^*)\to K
$$
where $ (,)_{ijk}$ is the pairing \eqref{E:pairing1-}.  We say that the
modified $6j$-symbols are \emph{totally symmetric} if for any good tuple
$(i,j,k,l,m,n)\in I^6$
$$
\sjtop ijklmn \circ \left[\eta(k^*, j,i)\otimes \eta( m^*,l, k)\otimes
  \eta(j^*,l^*,n)\otimes \eta(i^*, n^*,m)\right] =\sjtop jik{m^*}{l^*}{n^*}.
$$

For ribbon $\cat$, the associated modified $6j$-symbols are totally symmetric
(see \cite{Tu}, Chapter VI). The isomorphisms $\eta(i,j,k)$ in this case are
determined by so-called half-twists. A half-twist in $\cat$ is a family
$\{\theta'_i\in K\}_{i\in I}$ such that for all $i\in I$, we have
$\theta'_{i^*} =\theta'_i $ and the twist $V_i\to V_i$ is equal to
$(\theta'_i)^2 \, {\text {id}}_{V_i}$.

\begin{lemma}
  The modified $6j$-symbols defined in Section \ref{SS:Uslt6j} are totally
  symmetric.
\end{lemma}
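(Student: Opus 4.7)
The plan is to transfer the total symmetry from the ribbon category $\catd$ of Section \ref{SS:UsltH} to $\cat$ via the forgetful functor $\varphi:\catd\to\cat$, exploiting the identification of $6j$-symbols established in the remark at the end of Section \ref{SS:Uslt6j}.

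First, I would recall that $\catd$ is a ribbon Ab-category (Section 6.2 of \cite{GPT}), so it admits a half-twist $\{\theta'_i\}_{i\in I}$. By the general fact stated just before the lemma, this gives canonical isomorphisms $\eta_\catd(i,j,k):H_\catd(i,j,k)\to H_\catd(k,j,i)$ (for good triples) satisfying the three listed axioms (cyclic invariance, involutivity, compatibility with $(,)_{ijk}$), and the modified $6j$-symbols of $\catd$ are totally symmetric with respect to these isomorphisms.

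Second, for any triple $(\tilde i,\tilde j,\tilde k)\in (I^\cat)^3$ with integral {\charge} $n\in\{-2\m,\ldots,2\m\}$, I would pick integer-free lifts $i,j,k\in\C\setminus\Z$ of $\tilde i,\tilde j,\tilde k$ with $i+j+k=n$. By Lemma \ref{L:CanIso}, the functor $\varphi$ induces a canonical $\C$-linear isomorphism $H_\catd(i,j,k)\cong H_\cat(\tilde i,\tilde j,\tilde k)$, and I would transport $\eta_\catd(i,j,k)$ across this identification to define $\eta_\cat(\tilde i,\tilde j,\tilde k)$. When the integral {\charge} condition fails, Lemma \ref{L:CanIso} gives $H_\cat(\tilde i,\tilde j,\tilde k)=0$, and there is nothing to define. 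Independence of the lift must be checked: both $H_\catd(i,j,k)$ and $\eta_\catd(i,j,k)$ are compatible under simultaneous shifts of the lifts, because the half-twist $\theta'_i$ depends on $V_i$ only through its image in $\cat$ (since the $K^r$-central-character grading places $V_i$ and $V_{i+2r}$ in the same graded piece and $\varphi$ forgets $H$).

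Third, I would verify the three axioms for $\eta_\cat$. Each reduces to the corresponding identity for $\eta_\catd$ once one checks that the pairing $(,)_{\tilde i,\tilde j,\tilde k}$ in $\cat$ corresponds under the Lemma \ref{L:CanIso} identifications to the pairing $(,)_{ijk}$ in $\catd$. This in turn follows from the commutative diagram \eqref{E:CommDiag} applied to the theta graph $\Theta_{i,j,k}$ together with Lemma \ref{L:tambipair}, exactly as in the proof of Theorem \ref{rol}: both $\qd$-values and $\langle\cdot\rangle$-invariants on any cutting of $\Theta$ are preserved by $\varphi$.

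Finally, for the main total-symmetry identity applied to a tuple $(\tilde i,\tilde j,\tilde k,\tilde l,\tilde m,\tilde n)$, I split into cases: if the total {\charge} of the associated graph $\Gamma$ is nonzero, then by the remark at the end of Section \ref{SS:Uslt6j} both sides of the identity vanish; if it is zero, then both $6j$-symbols equal their $\catd$ counterparts for some integer lifts, and the total symmetry identity in $\catd$ directly yields the identity in $\cat$. The main obstacle is the bookkeeping in this last step: I must check that the eight instances of $\eta$ appearing on the two sides of the total-symmetry equation can be simultaneously realized via a single consistent choice of lifts $i,j,k,l,m,n\in\C\setminus\Z$ of $\tilde i,\tilde j,\tilde k,\tilde l,\tilde m,\tilde n$, i.e.\ that the charge-zero condition on $\Gamma$ is compatible with the integral-{\charge} condition on each of the four multiplicity modules \eqref{tritri} appearing in $H(\Gamma)$. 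This is a straightforward (if slightly tedious) linear-arithmetic verification using the admissibility of all four vertex-triples.
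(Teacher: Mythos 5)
Your proposal follows the paper's proof exactly: the paper likewise equips $\catd$ with the half-twist $\theta'_i=q^{(i/2)^2-\m^2}$, obtains total symmetry of the $6j$-symbols there, and transports the isomorphisms $\eta$ to $\cat$ via Lemma \ref{L:CanIso}, leaving well-definedness and the final identity as routine checks that you spell out in more detail. One small caveat: your parenthetical reason for lift-independence is not quite right, since $\theta'_{i+2r}\neq\theta'_i$ in general, so the invariance must come from the full combination of half-twists and braidings defining $\eta$ rather than from the half-twist alone; this does not affect the structure of the argument.
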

\begin{proof}
  The category $\catd$ is ribbon with half-twist $\left(\twist'_i\right)_{i\in
    I}=\left(q^{(i/2)^2-(\m)^2}\right)_{i\in I}$, see \cite{GPT}.  This gives
  rise to a family of isomorphisms
  $$ \eta(i,j,k): H_{\catd}(i,j,k)
  \rightarrow H_{\catd} (k,j,i ) $$ making the modified $6j$-symbols
  associated with $\catd$ totally symmetric. Using the isomorphisms provided
  by Lemma \ref{L:CanIso}, one can check that this family induces a
  well-defined family of
  isomorphisms $$\eta(i,j,k):H_{\cat}(i,j,k) \rightarrow H_{\cat}(k,j,i).$$
  The latter family makes the modified $6j$-symbols associated with $\cat$
  totally symmetric.
\end{proof}

\begin{remark}
  For a category with totally symmetric $6j$-symbols, the construction of
  Section \ref{S:3Man} may be applied to links in non-oriented closed
  3-manifolds.
\end{remark}

\linespread{1}

\vfill

\end{document}